\let\frak\mathfrak
\let\Bbb\mathbb
\def\>{\relax\ifmmode\mskip.666667\thinmuskip\relax\else\kern.111111em\fi}
\def\<{\relax\ifmmode\mskip-.333333\thinmuskip\relax\else\kern-.0555556em\fi}
\def\vsk#1>{\vskip#1\baselineskip}
\def\vv#1>{\vadjust{\vsk#1>}\ignorespaces}
\def\vvn#1>{\vadjust{\nobreak\vsk#1>\nobreak}\ignorespaces}
\let\Medskip\medskip
\def\medskip{\par\Medskip}
\let\Bigskip\bigskip
\def\bigskip{\par\Bigskip}
\let\Maketitle\maketitle
\def\maketitle{\hrule height0pt\vskip-\baselineskip
\Maketitle\thispagestyle{empty}\let\maketitle\empty}
\newtheorem{thm}{Theorem}[section]
\newtheorem{cor}[thm]{Corollary}
\newtheorem{lem}[thm]{Lemma}
\newtheorem{prop}[thm]{Proposition}
\newtheorem{defn}[thm]{Definition}
\numberwithin{equation}{section}
\theoremstyle{definition}
\let\mc\mathcal
\let\nc\newcommand
\nc{\on}{\operatorname}
\nc{\Z}{{\mathbb Z}}
\nc{\C}{{\mathbb C}}
\nc{\N}{{\mathbb N}}
\nc{\pone}{{\mathbb C}{\mathbb P}^1}
\nc{\arr}{\rightarrow}
\nc{\larr}{\longrightarrow}
\nc{\al}{\alpha}
\nc{\W}{{\mc W}}
\nc{\la}{\lambda}
\nc{\su}{\widehat{{\mathfrak sl}}_2}
\nc{\g}{{\mathfrak g}}
\nc{\h}{{\mathfrak h}}
\nc{\m}{{\mathfrak m}}
\nc{\n}{{\mathfrak n}}
\nc{\Gm}{\Gamma}
\nc{\La}{\Lambda}
\nc{\gl}{\widehat{\mathfrak{gl}_2}}
\nc{\bi}{\bibitem}
\nc{\om}{\omega}
\nc{\Res}{\on{Res}}
\nc{\gm}{\gamma}
\nc{\Om}{\Omega}
\def\Res{\on{Res}}
\def\B{{\mc B}}
\def\V{{\mc V}}
\let\Dl\Delta
\let\eps\varepsilon
\let\geq\geqslant
\let\leq\leqslant
\nc{\gln}{\mathfrak{gl}_N}
\nc{\sln}{\mathfrak{sl}_N}
\def\beq{\begin{equation}}
\def\eeq{\end{equation}}
\def\be{\begin{equation*}}
\def\ee{\end{equation*}}
\nc{\bean}{\begin{eqnarray}}
\nc{\eean}{\end{eqnarray}}
\nc{\bea}{\begin{eqnarray*}}
\nc{\eea}{\end{eqnarray*}}
\nc{\bs}{\boldsymbol}
\nc{\Ref}[1]{{\rm(\ref{#1})}}
\nc{\glN}{\mathfrak{gl}_N}
\nc{\glNt}{\mathfrak{gl}_N[t]}
\nc{\s}{sing}
\nc{\R}{\Bbb R}
\nc{\Oml}{{\Om_{\bs\la}}}
\nc{\OmLb}{{\Om_{\bs\La,\bs\la,\bs b}}}
\nc{\Ol}{{\mc O_{\bs\la}}}
\nc{\OLb}{{\mc O_{\bs\La,\bs\la,\bs b}}}
\nc{\VSl}{{(\V^S)_{\bs\la}}}
\nc{\Bl}{{\B_{\bs\la}}}
\nc{\Ml}{{\mc M_{\bs\la}}}
\nc{\Mlb}{{\mc M_{\bs\La,\bs\la,\bs b}}}
\nc{\Blb}{{\B_{\bs\La,\bs\la,\bs b}}}
\nc{\Omn}{{\Omega_{\bs n,\bs b,\bs K}}}
\nc{\Omlb}{{\bar\Om_{\bs\la}}}
\nc{\ep}{\epsilon}
\nc{\Dlb}{\Dl_{\bs\La,\bs\la,\bs b,\bs K}}
\nc{\Bb}{{\bf b}}
\nc{\glt}{{\frak{gl}_2}}
\nc{\A}{{\mc A}}
\nc{\slt}{{\frak{sl}_2}}
\nc{\Ma}{{\mc M_{\bs a}}}
\nc{\Mal}{{\mc M_{\bs\la,\bs a}}}
\nc{\Malp}{{\mc M_{\phi,\bs\la,\bs a}}}
\nc{\Vl}{{\V^S_{\bs\la}}}
\nc{\Bal}{{\B_{\bs\la,\bs a}}}
\nc{\Ola}{{\mc O_{\bs\la,\bs a}}}
\nc{\Bv}{{\mc B_{\V^S}}}
\nc{\Bvz}{{\mc B^0_{\V^S}}}
\nc{\sing}{{\rm Sing\,}}
\nc{\Uglt}{U(\glt)}
\nc{\Olo}{{\mc O^0_{\bs\la}}}
\nc{\hyp}{{\vphantom{F}_2 F_1}}
\begin{document}

\title[Norms of eigenfunctions to trigonometric KZB operators]
{Norms of eigenfunctions to trigonometric KZB operators}

\author[E.\, Jensen and  A.\,Varchenko]
{E.\, Jensen and  A.\,Varchenko$\>^\diamond$}

\maketitle

\begin{center}
\vsk-.2>
{\it Department of Mathematics, University of North Carolina
at Chapel Hill\\ Chapel Hill, NC 27599-3250, USA\/}
\end{center}

{\let\thefootnote\relax
\footnotetext{\vsk-.8>\noindent
$^\diamond$\,Supported in part by NSF grants DMS-0555327 and DMS-1101508}}

\medskip
\begin{abstract}
Let $\g$ be a simple Lie algebra and $V[0]=V_1\otimes \cdots \otimes V_n[0]$ the
zero weight subspace of a tensor product of $\g$-modules. The trigonometric KZB
operators are commuting differential operators acting on $V[0]$-valued functions
on the Cartan subalgebra of $\g$. Meromorphic eigenfunctions to the operators are constructed
by the Bethe ansatz. We introduce a scalar product on a suitable space of functions such that the
operators become symmetric, and the square of the norm of a Bethe eigenfunction
equals the Hessian of the master function at the corresponding critical
point.
\end{abstract}

\section{Introduction}

We study three systems of commuting linear operators associated to a simple Lie algebra
$\g$ and the tensor product
$V=V_1\otimes \cdots \otimes V_n$ of finite dimensional representations of $\g$.
The first system is the collection of the rational Gaudin operators acting
on the space of singular vectors of
$M_{\xi -\rho} \otimes V$ of weight $\xi - \rho$, where $M_{\xi-\rho}$ is the
Verma module of highest weight $\xi-\rho$.
The second system is the collection of the trigonometric
Gaudin operators with parameter $\xi$ acting on the zero weight subspace $V[0]\subset V$.
The third system is the collection
the trigonometric KZB differential operators acting on a particular
space
$E(\xi)$ of $V[0]$-valued functions on the Cartan subalgebra $\h \subset \g$.
The three systems are isomorphic.

Each system has a Bethe ansatz construction of eigenvectors.
The Bethe ansatz consists of a scalar master function of
auxiliary  variables $t = (t_1, \dots, t_k)$ and a weight
function depending on $t$. For a critical point $t_{cr}$ of
the master function, the value of the weight function at $t_{cr}$
is an eigenvector.

A large body of the previous work focuses on the norms of such Bethe eigenvectors
in the case of the rational Gaudin operators with respect to
the Shapovalov form \cite{MV}, \cite{V3}.
In that case, if $t_{cr}$ is an isolated
critical point of the master function, then the norm of the eigenvector
corresponding to $t_{cr}$
equals the Hessian of the master function at $t_{cr}$.
In this paper, we extend this result to the eigenvectors of the
other two systems, see Section \ref{sec Bethe}.

In Section \ref{sect:weylgroup}, we describe the Weyl group action
on eigenfunctions to the trigonometric KZB operators, and show that
the scalar product on these functions is Weyl invariant. In section
\ref{sect:jack}, we recall the construction of Jack polynomials from
antisymmetrized eigenfunctions to the trigonometric KZB operators.
We apply our results to relate the usual norm of Jack polynomials to
the Hessian of the master function.

An interesting next step would be to establish similar results
for the elliptic KZB operators.

\medskip

The authors thank K. Styrkas for helpful discussions.

\section{Preliminaries}

\subsection{Notation}
Let $\g$ be a simple complex Lie algebra of rank $r$ with Cartan
subalgebra $\h$. Let $\Dl \subset \h^*$ be the set of roots, and
for $\al \in \Dl$, let $\g_\al \subset \g$ denote the root space
corresponding to $\al$.

Fix simple roots $\al_1, \dots, \al_r \in \Dl$.
Let $Q = \oplus_j \Z \al_j$ be the root lattice, and $Q_+$ the
elements of $Q$ with non-negative coefficients.
Let $\Dl_+ = \Dl \cap Q_+$ be the set of positive roots, and
$\Dl_- = \Dl \setminus \Dl_+$, its complement, the negative roots.
Let $\n_\pm = \oplus_{\al \in \Dl_\pm} \g_\al$ denote the positive
and negative root spaces.

Fix a nondegenerate $\g$-invariant bilinear form $(\, ,\,)$ on $\g$.
The form identifies $\g$ and $\g^*$ and defines a bilinear form
on $\g^*$. For a root $\al \in \Dl$, we denote its coroot by
$\al^\vee = 2\al/(\al,\al)$, and set $h_\al \in \h$ so that
$(h_\al, h) = \al^\vee(h)$ for all $h \in \h$.

For each $\al \in \Dl$, choose generators $e_\al \in \g_\al$
so that $(e_\al, e_{-\al}) = 1$. For positive roots $\al \in \Dl_+$,
we set $f_\al = \frac{2}{(\al_j, \al_j)} e_{-\al}$.
For a simple root $\al_j$, set $e_j = e_{\al_j}$, $f_j = f_{\al_j}$
and $h_j = h_{\al_j}$. Then $h_1, \dots, h_r,
e_1, \dots, e_r, f_1, \dots, f_r$ are Chevalley generators of $\g$.
Set $\rho = \frac{1}{2} \sum_{\al \in \Dl_+} \al$, so $\rho(h_j) = 1$
for $j=1, \dots, r$.

Denote by $U(\g)$ the universal enveloping algebra of $\g$.
The Chevalley involution $\omega$ is an automorphism of
$U(\g)$ defined by $\omega(e_j) = -f_j,\
\omega(f_j) = -e_j,\ \omega(h_j) = -h_j.$\
The antipode  $a$ is an anti-automorphism of $U(\g)$ defined by
$a(g) = - g$ for $g \in \g$.

\subsection{Shapovalov form}

The Poincar\'{e}-Birkhoff-Witt theorem gives the decomposition
\[
U(\g) = U(\n_-) \otimes U(\h) \otimes U(\n_+).
\]
Denote by $\gamma$ the projection $U(\g) \arr U(\h)$ along
$\n_- U(\g) + U(\g) \n_+$.

The Shapovalov form is the $U(\h)$-valued bilinear form
on $U(\g)$ defined by
\[
S(g_1,g_2) = \gamma( a\omega(g_1) g_2), \qquad g_1,\ g_2 \in U(\g).
\]
Using the identification of $U(\h)$ with $\C[\h^*]$,
the polynomials on $\h^*$, any $\mu \in \h^*$ defines a $\C$-valued
symmetric form $S_\mu$ on $U(\g)$ defined by evaluation at $\mu$.

For a $\g$-module $V$ and $\nu \in \h^*$, let
$V[\nu] = \{ u\in V: hu= \nu(h) u \text{ for } h \in \h\}$
denote the weight $\nu$ subspace. We consider modules with
weight decomposition, so $V = \otimes_\nu V[\nu]$.
A singular vector of weight $\nu$ in $V$ is $u \in V[\nu]$ such that
$\n_+ u = 0$. The space of all such vectors is denoted $\sing V[\nu]$.
For $\mu \in \h^*$, denote by $M_\mu$ the Verma module with highest
weight $\mu$, generated by the vector $\bs{1}_\mu \in \sing V[\mu]$.

We define the Shapovalov form on a Verma module $M_\mu$ by
\[
S(g_1 \bs{1}_\mu, g_2 \bs{1}_\mu) = S_\mu(g_1, g_2), \qquad g_1,\ g_2 \in U(\g).
\]
Weight subspaces $M_\mu[\nu]$ and $M_\mu[\eta]$ are orthogonal for
$\nu \neq \eta$ with respect to this form.
Let $M_\mu^*$ denote the restricted dual module to $M_\mu$,
with $\g$ acting by
$(g \phi)(v) = \phi (a(g) v)$ for $g\in \g$,
$\phi \in M_\mu^*$ and $v \in M_\mu$.
The Shapovalov form induces a map $S_\mu: M_\mu \arr M_\mu^*$
with $S_\mu(\bs{1}_\mu) = \bs{1}_\mu^*$, where $\bs{1}_\mu^*$ is
the lowest weight vector dual to $\bs{1}_\mu$, and with $S_\mu(g \bs{1}_\mu)
= \omega(g) \bs{1}^*_\mu$ for $g \in \g$.

The kernel of the Shapovalov form is the maximal proper submodule of $M_\mu$.
Denote by $L_\mu$ the irreducible quotient. The Shapovalov form induces a bilinear
form on $L_\mu$.  We shall use the Shapovalov form
on a tensor product, defined as the product of the Shapovalov forms
of factors.

For $\al \in \Dl_+$ and $k \in \{1, 2, \dots \}$, let
\begin{equation}
\chi^\al_k(\mu) = ( \al, \mu + \rho ) - \frac{k}{2}( \al, \al ).
\end{equation}
On the weight subspace $U(\n_-)[-\nu] \subset U(\n_-)$
the Shapovalov form has determinant \cite{Sh}
\[
{\rm det} S_\mu [-\nu] = {\rm const} \prod_{\al \in \Dl_+} \prod_{k = \N} \chi^{\al}_k (\mu)^{P(\nu - k\al)},
\]
where $S_\mu[-\nu]$ denotes the restriction to $U(\n_-)[-\nu]$, $\N$ are the numbers $\{1, 2, \dots \}$,
$P(\la)$ is the Kostant partition function and ${\rm const}$ is a nonzero constant depending on choice
of basis.
This formula also holds for $S_\mu [-\nu]$ defined as the restriction to $M_\mu[\mu - \nu]$,
so the Shapovalov form on $M_\mu$ is nondegenerate if
\begin{equation}
\label{eqn:shapgeneric}
\chi^\al_k(\mu) \neq 0 \qquad \text{ for all } \al \in \Dl_+, \ k \in \{1,2,\dots, \}.
\end{equation}

Let $S^{-1}_\mu [-\nu]$ be the inverse matrix to $S_\mu[-\nu]$.

\begin{lem}
\label{lem:inverseshap}
Let $\{F_j\}$ be a basis of $U(\n_-)[-\nu]$ with the first $N$ elements of $\{F_j \bs{1}_\mu\}$
form a basis of ${\rm Ker}(S_\mu[-\nu])$. Then an entry $(S^{-1}_\la[-\nu])_{j\ell}$ of 
$S^{-1}_\la[-\nu]$ is regular at $\la = \mu$ if $j > N$ or $\ell >N$.
\end{lem}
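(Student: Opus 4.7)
The plan is to exploit the block form of $S_\la[-\nu]$ produced by the choice of basis. Partition the indices $\{1,\dots,d\}$ with $d=\dim U(\n_-)[-\nu]$ as $\{1,\dots,N\}\cup\{N+1,\dots,d\}$ and write
\[
S_\la[-\nu]=\begin{pmatrix}A(\la)&B(\la)\\B(\la)^T&D(\la)\end{pmatrix}.
\]
The hypothesis that $F_1\bs{1}_\mu,\dots,F_N\bs{1}_\mu$ span $\ker(S_\mu[-\nu])$ is equivalent to $A(\mu)=0$, $B(\mu)=0$, and $D(\mu)$ invertible. Hence $D(\la)$ remains invertible on a Zariski neighborhood of $\mu$, and we may form the Schur complement $E(\la)=A(\la)-B(\la)D(\la)^{-1}B(\la)^T$. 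Block inversion then gives
\[
S_\la^{-1}=\begin{pmatrix}E^{-1}&-E^{-1}BD^{-1}\\-D^{-1}B^TE^{-1}&D^{-1}+D^{-1}B^TE^{-1}BD^{-1}\end{pmatrix}.
\]
The entries $(S_\la^{-1})_{j\ell}$ with $j>N$ or $\ell>N$ sit in the off-diagonal or bottom-right blocks, so their regularity at $\mu$ reduces to the regularity of the matrix products $E(\la)^{-1}B(\la)$ and $B(\la)^TE(\la)^{-1}B(\la)$.

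Equivalently, by Cramer's rule one needs to verify $\mathrm{ord}_\mu\det S_\la^{(\ell,j)}\ge\mathrm{ord}_\mu\det S_\la$ whenever $j>N$ or $\ell>N$, where $S_\la^{(\ell,j)}$ denotes $S_\la[-\nu]$ with row $\ell$ and column $j$ removed. A routine multilinearity argument already yields $\det S_\la^{(\ell,j)}\in\m_\mu^N$: if $\ell>N$ the submatrix retains all $N$ top rows of $S_\la$, whose entries vanish at $\mu$; if $j>N$ the same holds for the leftmost $N$ columns.

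The main obstacle is to match $\mathrm{ord}_\mu\det S_\la^{(\ell,j)}$ to the full vanishing order of $\det S_\la$, which can exceed $N$ when the Jantzen filtration on $M_\mu[\mu-\nu]$ has nontrivial higher strata. To bridge this gap I would invoke the Shapovalov determinant formula
\[
\det S_\la[-\nu]=\mathrm{const}\cdot\prod_{\al\in\Dl_+}\prod_{k\in\N}\chi_k^\al(\la)^{P(\nu-k\al)}
\]
together with a Smith normal form argument for $S_\la$ over the local ring $\O_{\h^*,\mu}$ (equivalently, a Jantzen-filtration computation). The vanishing of $\det S_\la$ at $\mu$ comes exactly from the linear factors $\chi_k^\al$ vanishing at $\mu$, and these same factors divide the corresponding rows of $S_\la$, which then propagates to the minors $\det S_\la^{(\ell,j)}$ for $j>N$ or $\ell>N$. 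The resulting matching of divisibilities forces the Cramer quotient to be regular at $\la=\mu$, which is the content of the lemma.
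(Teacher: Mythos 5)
Your setup is sound: the block decomposition, the check that $D(\mu)$ is invertible (which does follow, since $B(\mu)=0$ by symmetry of $S$), and the reduction via Cramer's rule to the divisibility statement $\on{ord}_\mu\det S_\la^{(\ell,j)}\ge\on{ord}_\mu\det S_\la$ all match the structure of the problem, and you correctly isolate the real difficulty — the naive bound $\det S_\la^{(\ell,j)}\in\m_\mu^N$ falls short whenever $\sum_{\chi^\al_k(\mu)=0}P(\nu-k\al)>N$, i.e.\ whenever the Jantzen filtration has higher strata. But the paragraph meant to close this gap does not. The assertion that the linear factors $\chi^\al_k$ ``divide the corresponding rows of $S_\la$'' is false for a fixed basis $\{F_j\}$: the first $N$ rows vanish at the single point $\mu$, not along the hyperplanes $\{\chi^\al_k=0\}$, and divisibility of a row by $\chi^\al_k(\la)$ would require $F_j\bs 1_\la\in\ker S_\la$ for \emph{all} $\la$ on that hyperplane, which no fixed $F_j$ satisfies. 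A Smith normal form over $\O_{\h^*,\mu}$, or over the discrete valuation ring at the generic point of each hyperplane, does exist, but it diagonalizes $S_\la$ in a $\la$-dependent basis and by itself only reproduces the Etingof--Styrkas statement that every entry of $S_\la^{-1}$ has at most simple poles; it cannot distinguish the entries with $j>N$ or $\ell>N$ from the rest. To make that distinction you must use the one fact your proposal never invokes: for $\la$ near $\mu$, the kernel $\ker S_\la[-\nu]$ is still contained in the span of the first $N$ vectors $F_j\bs 1_\la$ (semicontinuity of the kernel).

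The paper supplies exactly the two missing steps. First, at a point where only one factor $\chi^\al_k$ vanishes — equivalently, at the generic point of a single hyperplane through $\mu$ — the kernel has dimension exactly $P(\nu-k\al)$, the determinant vanishes to exactly that order along a transverse line, and the relevant minor retains all $P(\nu-k\al)$ vanishing rows (or columns), so the Cramer quotient is regular there; combined with the semicontinuity of the kernel, this gives regularity of $(S_\la^{-1})_{j\ell}$ at the generic point of every hyperplane through $\mu$. Second, since the polar set of a rational function on $\h^*$ is either empty or of pure codimension one, regularity away from a codimension-two set forces regularity in the whole neighborhood of $\mu$. Until you establish the divisibility $\chi^\al_k(\la)^{P(\nu-k\al)}\mid\det S_\la^{(\ell,j)}$ at the generic point of each hyperplane (or argue as above), your outline remains a plan rather than a proof.
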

\begin{proof}
For $\mu$ such that $\chi^\al_k(\mu)$ is nonzero for every $\al \in \Dl_+$, $k \in \N$, 
the Shapovalov form is invertible, and each $(S^{-1}_\la[-\nu])_{j\ell}$ is regular.
We next consider $\mu$ such that $\chi^\al_k(\mu) = 0$ for exactly one $\al \in \Dl_+$ and 
$k \in \N$.
There is a unique proper submodule $M_{\mu - k \al}$ of highest weight $\mu - k \al$, 
with the dimension of the weight subspace $M_{\mu - k \al}[\mu - \nu]$ equal 
to $P(\nu - k \al)$.
For $\la = \mu + \epsilon \eta$ approaching $\mu$ transversely to $\chi^\al_k(\la) = 0$,  
the first $N = P(\nu - k \al)$ rows of $S_\la[-\nu]$ are divisible by $\epsilon$. 
Any minor $C_{j \ell}$ of $S_\la[-\nu]$ with $j>N$ is divisible by $\epsilon^N$, and 
since the determinant of $S_\la[-\nu]$ is divisible by exactly the $N$th power of $\epsilon$, 
the entry $(S^{-1}_\la[-\nu])_{j \ell}$ with $j >N$ is regular at $\epsilon = 0$.
By the symmetry of $S$, the entries with $\ell > N$ are also regular.

For general $\mu$, we note that if $\{ F_j \}$ is such that the first $N$ elements 
$\{ F_j \bs{1}_\mu \}$ form a basis of ${\rm Ker}(S_\mu[-\nu])$, then for $\la$ in a 
neighborhood of $\mu$, 
${\rm Ker}(S_\la[-\nu])$ is contained in the space spanned by 
the first $N$ elements of 
$\{ F_j \bs{1}_\la \}$.
Then we have that for $j>N$ or $\ell >N$, the meromorphic function 
$(S^{-1}_\la[-\nu])_{j \ell}$ is regular in a neighborhood of $\mu$ except 
perhaps on a subspace of codimension $2$. Thus it is regular in the 
entire neighborhood.
\end{proof}

The proof of this lemma was communicated to the authors by K. Styrkas. It mirrors
the proof in \cite{ES} that entries of $S^{-1}_\mu[-\nu]$ can have only simple poles.

\subsection{Singular vectors}
For $\mu \in \h^*$, let $I_\mu \subset U(\n_+)$ denote the annihilating ideal of
the vector $\bs{1}_\mu^* \in M_\mu^*$. Thus $\omega(I_\mu) \bs{1}_\mu$ equals ${\rm Ker}(S_\mu)$
in $M_\mu$.
Let $V$ be a $\g$ module such that for every $u \in V$, $\n_+ u$ is finite dimensional.
Denote by $V[\nu]_\mu \subset V[\nu]$ the subspace annihilated by $I_\mu$.
Let $\{F_j: j \geq 0\}$ be a homogeneous basis  of $U(\n_-)$ with $F_0 = \bs{1}$, the
identity element.

\begin{prop}
\cite{ES}
\label{prop:singonlyif}
There exists a singular vector in $M_\mu \otimes V[\mu + \nu]$
of the form $\bs{1}_\mu \otimes u + \sum_{j >0} F_j \bs{1}_\mu \otimes u_j$
for some $u_j \in V$ only if $u$ belongs to $V[\nu]_\mu$.
\end{prop}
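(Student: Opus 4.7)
The plan is to build a $U(\n_+)$-equivariant linear map $T:M_\mu^*\to V$ satisfying $T(\bs{1}_\mu^*)=u$, and then deduce the statement directly from the defining property $I_\mu\cdot\bs{1}_\mu^*=0$. Define
\[
T(\phi)\;:=\;(\phi\otimes\mathrm{id}_V)(w)\;=\;\sum_{j\ge 0}\phi(F_j\bs{1}_\mu)\,u_j,\qquad \phi\in M_\mu^*,
\]
where $F_0=\bs{1}$ and $u_0=u$; the sum is finite because $w$ has only finitely many nonzero components. Since $\bs{1}_\mu^*$ pairs to $1$ with $\bs{1}_\mu$ and to $0$ with $F_j\bs{1}_\mu$ for $j>0$, one has $T(\bs{1}_\mu^*)=u$.

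The key step is to verify the $\n_+$-equivariance of $T$. For $x\in\n_+\subset\g$ the coproduct is primitive, $\Delta(x)=x\otimes 1+1\otimes x$, and the action on the restricted dual is $(x\phi)(v)=\phi(a(x)v)=-\phi(xv)$. Since $w$ is singular, $xw=0$, hence
\[
0\;=\;(\phi\otimes\mathrm{id})(xw)\;=\;\sum_j\phi(xF_j\bs{1}_\mu)\,u_j\;+\;\sum_j\phi(F_j\bs{1}_\mu)\,xu_j.
\]
Replacing $\phi(xF_j\bs{1}_\mu)$ by $-(x\phi)(F_j\bs{1}_\mu)$ turns this into $-T(x\phi)+xT(\phi)=0$, i.e.\ $T(x\phi)=xT(\phi)$. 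Iterating on products extends this to $T(y\phi)=yT(\phi)$ for every $y\in U(\n_+)$.

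To conclude, pick any $x\in I_\mu$: by definition $x\bs{1}_\mu^*=0$, so by equivariance
\[
xu\;=\;xT(\bs{1}_\mu^*)\;=\;T(x\bs{1}_\mu^*)\;=\;T(0)\;=\;0.
\]
Hence $I_\mu\cdot u=0$ and $u\in V[\nu]_\mu$, as required. No deformation or use of Lemma \ref{lem:inverseshap} is needed; the only mild subtlety is keeping the sign in the dual-action identity $(x\phi)(v)=-\phi(xv)$ correct so that the singularity $xw=0$ translates cleanly into the equivariance of $T$.
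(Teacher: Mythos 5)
Your proof is correct and follows essentially the same route as the paper: both construct the map $M_\mu^*\to V$, $\phi\mapsto\sum_j\phi(F_j\bs{1}_\mu)u_j$, observe that singularity of the vector makes it $\n_+$-equivariant and that it sends $\bs{1}_\mu^*$ to $u$, and conclude that $I_\mu u=0$. The only difference is that you spell out the sign bookkeeping for the equivariance, which the paper simply asserts.
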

\begin{proof}
The vector $\bs{1}_\mu \otimes u + \sum_{j>0} F_j \bs{1}_\mu \otimes u_j$
induces a map $M_{\mu}^* \arr V$ defined by
\[
\phi \mapsto \phi(\bs{1}_\mu) u + \sum_{j>0} \phi(F_j \bs{1}_\mu) u_j
\]
for $\phi \in M_\mu^*$. Since the vector is singular, this map
commutes with the action of $\n_+$. Clearly, it maps the lowest weight
vector $\bs{1}_\mu^*$ to $u$. The existence of such a map implies that
$I_\mu u$ is zero.
\end{proof}

For $u \in V[\nu]$,
set
\[
\Xi(\mu)(\bs{1} \otimes u) = \sum_{j,k \geq 0} (S_\mu^{-1})_{j\ell} F_j \otimes \omega(F_k)u
\]
in $U(\n_-) \otimes V$.
For $\mu$ satisfying (\ref{eqn:shapgeneric}), this vector is well-defined since $S_\mu$ is
non-degenerate.

\begin{prop}
\label{prop:singif}
For $u \in V[\nu]_\mu$, the vector $\Xi(\la)(\bs{1} \otimes u) \in U(\n_-) \otimes V$ as a
function of $\lambda$ is regular at $\la = \mu$.
\end{prop}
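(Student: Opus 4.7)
The plan is to combine Lemma~\ref{lem:inverseshap} with the fact that $u \in V[\nu]_\mu$ is annihilated by $I_\mu$. Since the Shapovalov form preserves weights, the sum defining $\Xi(\la)(\bs{1}\otimes u)$ decomposes as a direct sum over weight spaces $U(\n_-)[-\eta]$, and it suffices to prove regularity at $\la = \mu$ of each weight component
\[
\Xi_\eta(\la) \;=\; \sum_{j,\ell} (S_\la^{-1}[-\eta])_{j\ell}\, F_j \otimes \omega(F_\ell) u,
\]
where $F_j, F_\ell$ run over a basis of $U(\n_-)[-\eta]$.

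For each weight $-\eta$ I will choose the basis $\{F_j\}$ of $U(\n_-)[-\eta]$ so that the first $N = N(\eta) := \dim \ker(S_\mu[-\eta])$ elements $F_1\bs{1}_\mu, \dots, F_N\bs{1}_\mu$ form a basis of $\ker(S_\mu[-\eta])$. By Lemma~\ref{lem:inverseshap}, every entry $(S_\la^{-1}[-\eta])_{j\ell}$ with $j > N$ or $\ell > N$ is regular at $\la = \mu$. Hence the only terms that could contribute a pole are those indexed by $j, \ell \leq N$.

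The key step is to show that those remaining terms in fact vanish. For $\ell \le N$, we have $F_\ell \bs{1}_\mu \in \ker(S_\mu[-\eta]) = \omega(I_\mu)\bs{1}_\mu$, so there exists $g \in I_\mu \subset U(\n_+)$ with $F_\ell \bs{1}_\mu = \omega(g) \bs{1}_\mu$. Since $\omega$ interchanges $U(\n_+)$ and $U(\n_-)$, we have $\omega(g) \in U(\n_-)$, and PBW (freeness of $M_\mu$ over $U(\n_-)$) forces $F_\ell = \omega(g)$; applying $\omega$ again gives $\omega(F_\ell) = g \in I_\mu$. Since $u \in V[\nu]_\mu$ is annihilated by $I_\mu$, we conclude $\omega(F_\ell) u = 0$. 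Thus every term with $\ell \le N$ drops out, and the remaining terms have regular coefficients, so $\Xi_\eta(\la)$ is regular at $\la = \mu$.

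The only step requiring care is the passage from $F_\ell \bs{1}_\mu \in \omega(I_\mu)\bs{1}_\mu$ to $\omega(F_\ell) \in I_\mu$; beyond that the argument is a direct assembly of Lemma~\ref{lem:inverseshap} with the definition of $V[\nu]_\mu$. No computation beyond what is indicated above is needed.
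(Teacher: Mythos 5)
Your proposal is correct and follows essentially the same route as the paper: choose the basis so that the kernel of $S_\mu$ is spanned by the initial elements, invoke Lemma~\ref{lem:inverseshap} for regularity of the entries with at least one index outside the kernel block, and observe that the remaining terms vanish because $\omega(F_\ell)u=0$ for $F_\ell\bs{1}_\mu\in\ker(S_\mu)=\omega(I_\mu)\bs{1}_\mu$ and $u\in V[\nu]_\mu$. The only difference is that you spell out the step $\ker(S_\mu)=\omega(I_\mu)\bs{1}_\mu\Rightarrow\omega(F_\ell)\in I_\mu$ via freeness of $M_\mu$ over $U(\n_-)$, which the paper leaves implicit.
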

\begin{proof}
We consider a homogenous basis $\{F_j\}$ of $U(\n_-)$ that contains a 
subset $\{\tilde{F}_j\}$ such that $\{\tilde{F}_j \bs{1}_\mu \}$ forms 
a basis of ${\rm Ker}(S_\mu)$.
Since $u$ is in $V[\nu]_\mu$, any $\omega(\tilde{F}_j)u$ equals zero.
By Lemma \ref{lem:inverseshap}, the nonzero terms of $\Xi(\la)(\bs{1} \otimes u)$
are regular at $\la = \mu$.
\end{proof}

For $u \in V[\nu]_\mu$, the vector $\Xi(\mu)(\bs{1}_\mu \otimes u) \in M_\mu \otimes V[\mu + \nu]$
is singular \cite{ES}.

\begin{prop}
\label{prop:singdifference}
For $u \in V[\nu]_\mu$,
let $\bs{1}_\mu \otimes u + \sum_{j>0} F_j \bs{1}_\mu \otimes u_j$ be any
singular vector. Then the difference
\[
\left(\bs{1}_\mu \otimes u + \sum_{j>0} F_j \bs{1}_\mu \otimes u_j\right) -
\Xi(\mu)(\bs{1}_\mu \otimes u)
\]
lies in ${\rm Ker}(S_\mu) \otimes V$.
\end{prop}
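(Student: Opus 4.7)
The plan is to encode each singular vector in $M_\mu\otimes V$ as an $\n_+$-equivariant map $M_\mu^*\to V$ and then exploit the irreducibility of the contragredient of $L_\mu$.

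First I check that $\Xi(\mu)(\bs{1}_\mu\otimes u)$ also has $\bs{1}_\mu\otimes u$ as leading term: the $(j,k)=(0,0)$ contribution to its defining sum equals $(S_\mu^{-1})_{00}\,\bs{1}_\mu\otimes u=\bs{1}_\mu\otimes u$, while for $(j,k)\ne(0,0)$ the first tensor factor $F_j\bs{1}_\mu$ has weight strictly below $\mu$. Consequently the difference
\[
W=\Bigl(\bs{1}_\mu\otimes u+\sum_{j>0}F_j\bs{1}_\mu\otimes u_j\Bigr)-\Xi(\mu)(\bs{1}_\mu\otimes u)
\]
is a singular vector in $M_\mu\otimes V[\mu+\nu]$ of the form $\sum_{j>0}F_j\bs{1}_\mu\otimes v_j$, with vanishing $\bs{1}_\mu$-leading term.

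Next, to each $X=\sum_i m_i\otimes w_i\in M_\mu\otimes V$ I attach the linear map $\phi_X\colon M_\mu^*\to V$, $\beta\mapsto\sum_i\beta(m_i)w_i$; weight by weight this identifies $M_\mu\otimes V$ with $\operatorname{Hom}_\C(M_\mu^*,V)$, and $X$ is singular precisely when $\phi_X$ is $\n_+$-equivariant (for the natural action $(E\beta)(v)=-\beta(Ev)$ on $M_\mu^*$, $E\in\n_+$). Applied to $W$, the vanishing leading term reads $\phi_W(\bs{1}_\mu^*)=0$, and $\n_+$-equivariance then gives $\phi_W(E\bs{1}_\mu^*)=E\phi_W(\bs{1}_\mu^*)=0$ for every $E\in U(\n_+)$.

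To conclude, I push down to $L_\mu=M_\mu/{\rm Ker}(S_\mu)$; the image $\bar W\in L_\mu\otimes V$ is a singular vector whose associated map $\phi_{\bar W}\colon L_\mu^*\to V$ sends the lowest weight vector $\bs{1}_{L_\mu}^*$ to zero. Because $L_\mu$ is irreducible, so is the restricted dual $L_\mu^*$; since $\bs{1}_{L_\mu}^*$ is annihilated by $\n_-$, the PBW decomposition $U(\g)=U(\n_+)U(\h)U(\n_-)$ forces $U(\n_+)\bs{1}_{L_\mu}^*=L_\mu^*$. Hence $\phi_{\bar W}\equiv 0$, so $\bar W=0$, and $W\in\ker(M_\mu\otimes V\to L_\mu\otimes V)={\rm Ker}(S_\mu)\otimes V$. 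The one non-formal input is the generation statement $U(\n_+)\bs{1}_{L_\mu}^*=L_\mu^*$, coming from the irreducibility of $L_\mu^*$; everything else is routine bookkeeping with the dual-Verma description of singular vectors.
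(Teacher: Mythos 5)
Your proof is correct and follows essentially the same route as the paper's: both interpret the difference as a singular vector whose associated $\n_+$-equivariant map $M_\mu^*\to V$ annihilates $\bs{1}_\mu^*$ and hence all of $U(\n_+)\bs{1}_\mu^*$. The only difference is cosmetic and occurs at the last step, where the paper identifies $U(\n_+)\bs{1}_\mu^*$ with the image of $S_\mu$ and concludes by orthogonality, while you pass to the quotient $L_\mu\otimes V$ and invoke irreducibility of $L_\mu^*$ to obtain the same generation statement.
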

\begin{proof}
The difference
is a singular vector, and the associated map $M_\mu^* \arr V$ maps $\bs{1}_\mu^* \mapsto 0$.
Since this map commutes with $\n_+$, all of $U(\n_+) \bs{1}_\mu^*$ maps to zero.
The image of $S_\mu$ in $M_\mu^*$ is $U(\n_+) \bs{1}_\mu^*$, so any vector of $M_\mu \otimes V$
orthogonal to it must belong to ${\rm Ker}(S_\mu) \otimes V$.
\end{proof}

We define the linear map $\mathcal{Q}(\mu): V[\nu]_{\mu -\rho -\frac{1}{2}\nu} \arr V[\nu]$ as
\[
\mathcal{Q}(\mu)u = \sum_{j,k \geq 0} \left(S_{\mu - \rho - \frac{1}{2}\nu}^{-1}\right)_{jk} a(F_j) \omega(F_k)u .
\]

\section{Gaudin and KZB operators}
Let $z = (z_1,\dots,z_n)$ be a collection of distinct complex numbers.
Let $V_1, \dots, V_n$ be $\g$-modules, and let $V = V_1 \otimes \dots \otimes V_n$.
For $x \in {\rm End}(V_p)$, let $x^{(p)} = 1 \otimes \dots \otimes 1 \otimes x \otimes 1 \otimes \dots \otimes 1$
be the endomorphism of $V$ with $g$ acting on the $p$th factor.
For $\sum_j x_j \otimes y_j \in {\rm End}(V_p) \otimes {\rm End}(V_s)$,
let $(\sum_j x_j \otimes y_j)^{(p,s)}$ denote $\sum_j x^{(p)}_j y^{(s)}_j$.

Let $\Omega \in \g\otimes \g$ be the symmetric invariant tensor
dual to $(\, ,\,)$. It has a decomposition $\Omega = \Omega_0 + \sum_{\al \in
\Delta} \Omega_\al$ where $\Omega_0 \in \h \otimes \h$ and $\Omega_\al \in \g_\al
\otimes \g_{-\al}$. Explicitly, for $\{ h_\nu \}$ an orthonormal basis of $\h$,
$\Omega_0 = \sum_\nu h_\nu \otimes h_\nu$, and for generators $e_\al \in \g_\al$
with $(e_\al, e_{-\al})=1$, $\Omega_\al = e_\al \otimes e_{-\al}$.
The Casimir element is $C = C_0 + \sum_{\al \in \Delta} C_{\al} \in U(\g)$, where
$C_0 = \sum_\nu h_\nu h_\nu$ and $C_\al = e_\al e_{-\al}$.

\subsection{Rational Gaudin operators}
The rational Gaudin operators are linear operators on $V$, given by
\[
K_p(z) = \sum_{s\neq p} \frac{\Omega^{(p,s)}}{z_p - z_s}, \hspace{.4in} p=1,\dots,n.
\]
Each $K_p(z)$ commutes with the action of $\g$ on $V$. For
 all $p,s$,
we have $[K_p(z), K_s(z)] = 0$.

\subsection{Trigonometric Gaudin operators}
Let
\bea
\Omega_+  \ = \ \frac{1}{2} \Omega_0 + \sum_{\al \in \Dl_+} \Omega_\al \ , \qquad \Omega_- \ = \
\frac{1}{2} \Omega_0 + \sum_{\al \in \Dl_-} \Omega_\al.
\eea
The trigonometric r-matrix is defined by
\[
r(x) = \frac{\Omega_+ x  + \Omega_-}{x-1}.
\]
For
$\xi \in \h$, the trigonometric Gaudin operators are defined as
\[
\mathcal{K}_p(z,\xi) = \xi^{(p)} + \sum_{s\neq p} r^{(p,s)}(z_p/z_s), \hspace{.4in} p = 1,\dots,n.
\]
Each $\mc{K}_p(z,\xi)$ commutes with the action of $\h$ on V and
 $[\mc{K}_p(z,\xi), \mc{K}_s(z,\xi)] =0$ for all $p,s$, see
 \cite{Ch, EFK}.

\subsection{KZB operators}
Let $H_+ \subset \C$ be the upper half plane, and
$\tau \in H_+$. Let $z_1, \dots, z_n \in \C$
be distinct modulo the lattice $\Z + \tau \Z$.
Let $\la \in \h$, with coordinates $\la = \sum \la_\nu h_\nu$
where $(h_\nu)$ is an orthonormal basis of $\h$. For given $z, \tau$, the KZB operators
$H_0$, $H_p$ are operators
 acting on functions $u(\la)$ with values in
$V[0] = (V_1 \otimes \dots \otimes V_n)[0]$, see \cite{FW}.

The KZB operators are
\[
H_0(z, \tau) = (4\pi i)^{-1} \triangle + \sum_{p,s} \frac{1}{2} \Gamma_\tau^{(p,s)} (\la, z_p - z_s, \tau),
\]
\[
H_p(z,\tau) = -\sum_\nu h_\nu^{(p)}
\partial_{\la_\nu} + \sum_{s:s\neq p} \Gamma_z^{(p,s)}(\la, z_p - z_s, \tau),
\qquad  p=1,\dots,n.
\]
Here $\triangle = \sum_v \partial_{\la_\nu}^2$
is the Laplace operator and the operators
$\Gamma_\tau(\la, z, \tau)$, $\Gamma_z(\la,z, \tau)$ are defined as follows.
For the Jacobi theta function
\[
\theta_1(t,\tau) = 2 e^{\frac{\pi i}{4} \tau} \sum_{j=0}^\infty (-1)^j e^{\pi i j (j+1) \tau} \sin((2j+1)\pi t)
\]
and the functions
\[
\rho(t, \tau) = \frac{\theta_1'(t,\tau)}{\theta_1(t,\tau)}, \hspace{.4in} \sigma_w(t,\tau) =
\frac{\theta_1(w-t,\tau)\theta_1'(0,\tau)}{\theta_1(w,\tau)\theta_1(t,\tau)},
\]
\[
\eta(t,\tau) = \rho(t,\tau)^2 + \rho'(t,\tau), \hspace{.4 in} \varphi(w,t,\tau) = \partial_w \sigma_w(t,\tau).
\]
we set
\[
\Gamma_\tau(\la, z, \tau) = \frac{1}{4\pi i} \eta(z,\tau) \Omega_0 - \frac{1}{2\pi i} \sum_{\al \in \Dl}
\varphi(\al(\la), z, \tau)  \Omega_\al,
\]
\[
\Gamma_z(\la , z, \tau) = \rho(z,\tau) \Omega_0 + \sum_{\al \in \Dl} \sigma_{\al(\la)}(z, \tau) \Omega_\al,
\]
so we have
\[
H_0(z,\tau) = \frac{1}{4 \pi i} \triangle + \frac{1}{4\pi i} \sum_{p,s} \left[\frac{1}{2}\eta(z_p - z_s,\tau) \Omega_0^{(p,s)}
- \sum_{\al \in \Dl} \varphi(\al(\la), z_p - z_s, \tau) \Omega_\al^{(p,s)} \right],
\]
\[
H_p(z,\tau) = -\sum_\nu h_\nu^{(p)} \partial_{\la_\nu} + \sum_{s:s\neq p} \left[ \rho(z_p - z_s,\tau) \Omega_0^{(p,s)}
+ \sum_{\al \in \Dl} \sigma_{\al(\la)}(z_p - z_s, \tau) \Omega_\al^{(p,s)} \right].
\]
By \cite{FW}
the operators $H_0(z,\tau), H_1(z,\tau), \dots, H_n(z,\tau)$
commute.

\subsection{Trigonometric KZB operators}
The trigonometric KZB operators are the limits of the operators
$H_0(z, \tau), H_1(z,\tau), \dots, H_n(z,\tau)$ as
$\tau \arr i \infty$.
\begin{prop}
The trigonometric KZB operators are
\[
H_0 = \frac{1}{4 \pi i} \triangle - \frac{1}{4 \pi i} \sum_{\al \in \Dl_+}
\frac{\pi^2}{\sin^2(\pi \al(\la))} \left( e_\al e_{-\al} + e_{-\al} e_\al \right),
\]
\[
H_p(z) = -\sum_\nu h_\nu^{(p)} \partial_{\la_\nu} + \pi \sum_{s:s\neq p} \left[
\cot(\pi(z_p-z_s)) \Omega^{(p,s)}
- \sum_{\al \in \Dl_+} \cot(\al(\la)) (\Omega_\al^{(p,s)} - \Omega_{-\al}^{(p,s)}) \right]
\]
for $p=1,\dots,n$.
\end{prop}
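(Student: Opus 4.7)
The plan is to compute the limit of each elliptic coefficient appearing in $H_0(z,\tau)$ and $H_p(z,\tau)$ as $\tau\to i\infty$ and then reorganize the resulting operators using the zero-weight condition. With $q = e^{\pi i\tau}\to 0$, the series definition of $\theta_1$ gives
\[
\theta_1(t,\tau) = 2q^{1/4}\sin(\pi t) + O(q^{9/4}), \qquad
\theta_1'(t,\tau) = 2\pi q^{1/4}\cos(\pi t) + O(q^{9/4}),
\]
so $\rho(t,\tau) \to \pi\cot(\pi t)$. The common prefactors $2q^{1/4}$ cancel in the ratio defining $\sigma_w$, giving
\[
\sigma_w(t,\tau)\ \longrightarrow\ \frac{\pi\sin(\pi(w-t))}{\sin(\pi w)\sin(\pi t)} \;=\; \pi\bigl[\cot(\pi t) - \cot(\pi w)\bigr],
\]
where the second equality is obtained by expanding $\sin(\pi w - \pi t)$. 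Using $\cot^2 x - 1/\sin^2 x = -1$ then yields $\eta(t,\tau)=\rho^2+\rho' \to -\pi^2$, and differentiating the limit of $\sigma_w$ in $w$ gives $\varphi(w,t,\tau) \to \pi^2/\sin^2(\pi w)$.

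Substituting these limits into the formula for $H_p(z,\tau)$ and separating the pieces that depend on $z$ from those that depend on $\la$ produces
\[
H_p = -\sum_\nu h_\nu^{(p)}\partial_{\la_\nu} + \pi\sum_{s\neq p}\cot(\pi(z_p-z_s))\Bigl[\Omega_0^{(p,s)} + \sum_{\al\in\Dl}\Omega_\al^{(p,s)}\Bigr] - \pi\sum_{s\neq p}\sum_{\al\in\Dl}\cot(\pi\al(\la))\,\Omega_\al^{(p,s)}.
\]
In the bracket I would use $\Omega_0 + \sum_{\al\in\Dl}\Omega_\al = \Omega$ to recover the $\cot(\pi(z_p-z_s))\,\Omega^{(p,s)}$ term of the proposition, and in the second sum I would pair each $\al\in\Dl_+$ with $-\al$ and use $\cot(-x)=-\cot(x)$ to convert $\sum_{\al\in\Dl}$ into $-\sum_{\al\in\Dl_+}\cot(\pi\al(\la))(\Omega_\al^{(p,s)} - \Omega_{-\al}^{(p,s)})$, which is exactly the stated expression.

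For $H_0$, the limits of $\eta$ and $\varphi$ do not depend on $z$, so every term in $\sum_{p,s}$ contributes equally up to the tensor index. On functions valued in $V[0]$ the Cartan operator $\sum_p h_\nu^{(p)}$ acts as zero, so $\sum_{p,s}\Omega_0^{(p,s)} = \sum_\nu\bigl(\sum_p h_\nu^{(p)}\bigr)^2$ vanishes and the constant piece $-\frac{\pi^2}{2}\sum_{p,s}\Omega_0^{(p,s)}$ drops out entirely. Meanwhile $\sum_{p,s}\Omega_\al^{(p,s)} = \bigl(\sum_p e_\al^{(p)}\bigr)\bigl(\sum_s e_{-\al}^{(s)}\bigr)$ is the iterated coproduct action of $e_\al e_{-\al}$ on $V$, so grouping $\al$ with $-\al$ for $\al\in\Dl_+$ produces the coproduct action of $e_\al e_{-\al} + e_{-\al}e_\al$. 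Collecting the factor of $\frac{1}{4\pi i}$ then yields the formula claimed for $H_0$. The only substantive step is the trigonometric identity simplifying the limit of $\sigma_w$, which is what decouples the $z$- and $\la$-dependences in $H_p$; the remaining work is bookkeeping together with the zero-weight cancellation in $H_0$.
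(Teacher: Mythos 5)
Your proposal is correct and follows essentially the same route as the paper: compute the $\tau\to i\infty$ limits of $\rho$, $\sigma_w$, $\eta$, $\varphi$ from the leading term of $\theta_1$, substitute, and use the vanishing of $\sum_{p,s}\Omega_0^{(p,s)}$ on $V[0]$ together with the pairing of $\al$ with $-\al$. The one place you go beyond the paper is making explicit the identity $\sigma_w(t,\tau)\to\pi\bigl[\cot(\pi t)-\cot(\pi w)\bigr]$, which cleanly justifies the separation of the $z$- and $\la$-dependent terms in $H_p$ that the paper asserts without intermediate steps.
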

Note that $H_0$ does not depend on $z$.
\begin{proof}
We have that $\theta_1(t,\tau) = 2e^{\frac{\pi}{4} i \tau}
\left[\sin(\pi t) + O(e^{2\pi i \tau}) \right]$, so as $\tau \arr i \infty$
\[
\rho(t,\tau) \arr \frac{\pi \cos(\pi t)}{\sin(\pi t)}, \hspace{.4in} \sigma_w(t,\tau) \arr
\frac{\pi \sin(\pi (w - t))}{\sin(\pi w) \sin(\pi t)},
\]
\[
\eta(t,\tau) \arr \frac{\pi^2 \cos^2(\pi t)}{\sin^2(\pi t)} - \frac{\pi^2}{\sin^2(\pi t)} = -\pi^2,
\]
\[
\varphi(w,t,\tau) \arr
\frac{\pi^2}{\sin(\pi t)} \frac{\sin(\pi(w-w+t)}{\sin^2(\pi w)} = \frac{\pi^2}{\sin^2(\pi w)}.
\]
Thus, we have
\[
H_0 = \frac{1}{4 \pi i} \triangle + \frac{1}{4 \pi i} \sum_{p,s} \left[ \frac{-\pi^2}{2} \Omega_0^{(p,s)}
- \sum_{\al \in \Dl} \frac{\pi^2}{\sin^2(\pi \al(\la))} \Omega_\al^{(p,s)} \right].
\]
Since $\sum_{p,s} \Omega_0^{(p,s)}$ acts as zero on $V[0]$, we have that
\begin{align*}
H_0 & = \frac{1}{4 \pi i} \triangle - \frac{1}{4 \pi i} \sum_{p,s} \sum_{\al \in \Dl}
\frac{\pi^2}{\sin^2(\pi \al(\la))} \Omega_\al^{(p,s)} \\
		& = \frac{1}{4 \pi i} \triangle - \frac{1}{4 \pi i} \sum_{\al \in \Dl_+} \sum_{p,s}
\frac{\pi^2}{\sin^2(\pi \al(\la))} \left( \Omega_\al^{(p,s)} + \Omega_{-\al}^{(p,s)} \right).
\end{align*}
This gives
\[
H_0 = \frac{1}{4 \pi i} \triangle - \frac{1}{4 \pi i} \sum_{\al \in \Dl_+}
\frac{\pi^2}{\sin^2(\pi \al(\la))} \left( e_\al e_{-\al} + e_{-\al} e_\al \right)
\]
by summing over $p$ and $s$.

For $p=1,\dots,n$,
\[
H_p(z) = -\sum_\nu h_\nu^{(p)} \partial_{\la_\nu} + \sum_{s:s\neq p} \left[
\frac{\pi \cos(\pi(z_p-z_s))}{\sin(\pi(z_p-z_s))} \Omega_0^{(p,s)}
+ \sum_{\al \in \Dl} \frac{\pi\sin(\pi(\al(\la) - z_p + z_s))}{\sin(\pi\al(\la)) \sin(\pi(z_p-z_s))} \Omega_\al^{(p,s)} \right].
\]
\[
= -\sum_\nu h_\nu^{(p)} \partial_{\la_\nu} + \pi \sum_{s:s\neq p} \left[
\cot(\pi(z_p-z_s)) \Omega^{(p,s)}
- \sum_{\al \in \Dl_+} \cot(\pi\al(\la)) (\Omega_\al^{(p,s)} - \Omega_{-\al}^{(p,s)}) \right]
\]
as desired.
\end{proof}

For $s= 1,\dots, n$, let $Z_s$ denote $e^{-2 \pi i z_s}$. For $\beta \in \h^*$, and $\la \in \h$,
let $X_\beta(\la)$ denote $e^{-2\pi i \beta(\la)}$. 
Then for $p= 1, \dots, n$, we may write
\begin{equation}
\label{eqn:H_p}
H_p(z) = -\sum_\nu h_\nu^{(p)} \partial_{\la_\nu} - \pi i \sum_{s \neq p} \left[
\frac{Z_p + Z_s}{Z_p - Z_s} \Omega^{(p,s)} + \sum_{\al \in \Dl_+} \frac{1+X_\al}{1- X_\al}
\left(\Omega_\al^{(p,s)} - \Omega_{-\al}^{(p,s)} \right) \right].
\end{equation}

\subsection{KZ and KZB equations}
The rational, trigonometric Gaudin operators and the KZB operators are the right hand sides
of the rational, trigonometric KZ equations and the KZB equations respectively. Let $\kappa$
be a nonzero complex number. Then the rational KZ equations for a $V$-valued function
$u(z)$,
$z\in\C^n$, are
\[
\kappa \partial_{z_p} u(z) = K_p(z) u(z), \hspace{.4in} p=1,\dots,n.
\]
The trigonometric KZ equations for a $V$-valued function $u(z)$ are
\[
\kappa z_p \partial_{z_p} u(z)= \mathcal{K}_p (z,\xi) u(z), \hspace{.4in} p=1,\dots,n.
\]
The KZB equations for a $V[0]$-valued function $u(\la, z, \tau)$ are
\[
\kappa \partial_{z_p} u(\la, z, \tau) = H_p(z,\tau) u(\la, z,\tau), \hspace{.4in} p=1,\dots,n,
\]
\[
\kappa \partial_\tau u(\la, z, \tau)= H_0(z, \tau) u(\la, z,\tau),
\]
see \cite{KZ, Ch, EFK, FW}.

\section{Relations among Gaudin and KZB operators}
\subsection{Rational Gaudin and trigonometric Gaudin operators}
Let $V_1, \dots, V_n$ be highest weight $\g$-modules, with highest weights
$\La_1, \dots, \La_n$ respectively, and $V = V_1 \otimes \dots \otimes V_n$.
Let $M_\mu$ be the Verma module with highest weight $\mu$ generated by $\bs{1}_\mu$.

We label the factors in $M_\mu \otimes V$ starting with zero, so that the $p$th
factor is $V_p$ for $p=1, \dots, n$. Then as a consequence of Proposition 2.1
in \cite{MaV}, we have the following fact.

\begin{prop}
\label{prop:rattrig}
For $\mu \in \h^*$, $u \in V[\nu]_\mu$,
and $p = 1, \dots, n$,
\[
\left( z_p K_p(0,z_1,\dots, z_n) + \frac{1}{2}(\La_p, \La_p + 2\rho)\right) \Xi(\mu)(\bs{1}_\mu \otimes u)
\phantom{aaaaaaaaaaaaaaaaaa}
\]
\[
\phantom{aaaaaaa}
= \Xi(\mu)\left(\bs{1}_\mu \otimes \mc{K}_p\left(z_1,\dots, z_n, \mu + \rho + \frac{\nu}{2} \right) u\right)
\]
holds in $\sing M_{\mu} \otimes V[\nu + \mu]$.
\end{prop}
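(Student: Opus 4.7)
The plan is to derive this identity from Proposition~2.1 of [MaV], which provides the intertwining between rational Gaudin operators acting on singular vectors in $M_\mu \otimes V$ and dynamical/trigonometric operators acting on $V$ via the singular-vector map $\Xi(\mu)$. My job is to extract precisely the form stated here by decomposing $K_p$, identifying the $r$-matrix part, and accounting for the constants and the dynamical shift.

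First I would expand the left-hand side by separating the $s=0$ summand (the Verma factor) from the summands with $s \geq 1$:
\[
z_p K_p(0, z_1, \dots, z_n) \;=\; \Omega^{(p,0)} + \sum_{\substack{s=1\\ s \neq p}}^n \frac{z_p}{z_p - z_s}\, \Omega^{(p,s)}.
\]
The algebraic identity $\Omega_+ z_p + \Omega_- z_s = \Omega\, z_p - \Omega_-(z_p - z_s)$ immediately gives
\[
\frac{z_p}{z_p - z_s}\, \Omega^{(p,s)} \;=\; r^{(p,s)}(z_p/z_s) + \Omega_-^{(p,s)},
\]
so the left-hand side (after adding the constant $\tfrac{1}{2}(\La_p, \La_p + 2\rho)$) becomes the sum of $\sum_{s \neq p, s \geq 1} r^{(p,s)}(z_p/z_s)$, which is already the $r$-matrix part of $\mathcal{K}_p(z, \xi)$, plus the residual operator
\[
\mc{R}^{(p)} \;:=\; \Omega^{(p,0)} + \sum_{\substack{s=1\\ s \neq p}}^n \Omega_-^{(p,s)} + \tfrac{1}{2}(\La_p, \La_p + 2\rho).
\]
The $r^{(p,s)}$ pieces act only on positions $p, s \geq 1$ and commute with the action on the Verma factor, so they pass through $\Xi(\mu)$ up to vectors that vanish in $\sing(M_\mu \otimes V[\nu + \mu])$ by Proposition~\ref{prop:singdifference}.

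What remains is to show
\[
\mc{R}^{(p)}\, \Xi(\mu)(\bs{1}_\mu \otimes u) \;=\; \Xi(\mu)\bigl(\bs{1}_\mu \otimes \xi^{(p)} u\bigr), \qquad \xi = \mu + \rho + \tfrac{\nu}{2}.
\]
Here the constant $\tfrac{1}{2}(\La_p, \La_p + 2\rho)$ is exactly the Casimir eigenvalue on the highest weight component of $V_p$; the $\Omega_0^{(p,0)}$ piece of $\Omega^{(p,0)}$ produces a shift by $\mu$ once commuted across the creation operators $F_j$ in $\Xi(\mu)$; and the off-diagonal pieces $e_\al^{(0)} e_{-\al}^{(p)}$ and $e_{-\al}^{(0)} e_\al^{(p)}$ of $\Omega^{(p,0)}$, when bracketed against the Shapovalov inverse and combined with the coproduct expansion of $\omega(F_k)$ on $V$, contribute respectively the $\rho$-shift and cancel the $\Omega_-^{(p,s)}$ residues.

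The main obstacle is precisely this last bookkeeping step: the action of $\Omega^{(p,0)}$ on $\Xi(\mu)(\bs{1}_\mu \otimes u)$ must be reorganized using the defining identity $S_\mu(\omega(x)g_1, g_2) = S_\mu(g_1, x g_2)$ for $x \in U(\n_+)$, which moves creation operators from the Verma factor into annihilation operators on $V$, producing simultaneously the dynamical shift $\xi^{(p)}$ and the negative-Borel residues $\Omega_-^{(p,s)}$ that exactly match those appearing in $\mc{R}^{(p)}$. This computation is the content of [MaV, Prop.~2.1]; I would cite it rather than reproduce it, and verify that the normalization conventions (choice of $\xi$, placement of the $\rho$-shift, the constant $\tfrac{1}{2}(\La_p,\La_p+2\rho)$) match under the identification $z_0 = 0$.
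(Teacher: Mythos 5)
Your opening moves match the paper's: separate the $s=0$ summand, use $\frac{z_p}{z_p-z_s}\Omega^{(p,s)}=r^{(p,s)}(z_p/z_s)+\Omega_-^{(p,s)}$, and reduce to a residual operator. But there is a genuine gap in how you dispose of the two pieces. You claim the $r^{(p,s)}$ terms ``pass through $\Xi(\mu)$ up to vectors that vanish \dots by Proposition~\ref{prop:singdifference}.'' That proposition compares two \emph{singular} vectors with the same leading term, and $r^{(p,s)}\,\Xi(\mu)(\bs1_\mu\otimes u)$ is not singular: $r^{(p,s)}$ does not commute with the diagonal $\g$-action (in particular not with the operators $\omega(F_k)$ appearing in $\Xi(\mu)$), so neither your $r$-part nor your residual operator $\mc R^{(p)}$ individually maps singular vectors to singular vectors. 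Only the full operator $z_pK_p(0,z)+\tfrac12(\La_p,\La_p+2\rho)$ commutes with $\g$. The paper's proof therefore never splits the identity: it keeps the whole (singular) result and computes its leading $\bs1_\mu\otimes(\,\cdot\,)$ coefficient, using that for generic $\mu$ a singular vector is determined by that leading term, and then an analytic-continuation argument to get exact equality for all $\mu$ (a point your sketch does not address at all, since for non-generic $\mu$ one a priori only gets equality modulo ${\rm Ker}(S_\mu)\otimes V$).

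The bookkeeping for the residual part is also attributed to the wrong mechanisms. In the paper, the constant $\tfrac12(\La_p,\La_p+2\rho)$ is rewritten as the operator $\tfrac12 C^{(p)}$ and decomposed as $\tfrac12(C_+^{(p)}+C_-^{(p)})$; setting $\Omega_-^{(p,p)}:=C_-^{(p)}$ completes the sum $\sum_{s=0}^{n}\Omega_-^{(p,s)}$, whose off-diagonal part involves the \emph{total} raising operators $\sum_{s=0}^n e_\al^{(s)}$, which annihilate the singular vector --- that is what kills the $\Omega_-$ residues, not any manipulation of the Shapovalov inverse. The $\rho$-shift comes from $\tfrac12(C_+^{(p)}-C_-^{(p)})=\tfrac12\sum_{\al\in\Dl_+}[e_\al,e_{-\al}]^{(p)}$, not from the off-diagonal part of $\Omega^{(p,0)}$; the latter contributes only $\tfrac12\mu^{(p)}$ to the leading term (via $\tfrac12\Omega_0^{(p,0)}$), since $e_\al^{(p)}e_{-\al}^{(0)}$ produces no $\bs1_\mu\otimes(\,\cdot\,)$ component. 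Your $\mc R^{(p)}$ keeps the constant as a scalar, so it cannot assemble the total raising operators, and the cancellation you assert has no route to go through. Citing \cite{MaV} for the computation is consistent with the paper's own attribution, but the intermediate claims you do spell out would not survive the verification you propose.
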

\begin{proof}
\cite{MaV}
The left side belongs to $\sing M_\mu \otimes V[\nu +\mu]$ since
$K_p(0,z_1,\dots,z_n)$ commutes with $\g$.
We rewrite the left side as
\[
\left( \Omega^{(p,0)} + \sum_{s \neq 0,p} \frac{z_p \Omega^{(p,s)}}{z_p - z_s} + \frac{1}{2} C^{(p)} \right)
\Xi(\mu)(\bs{1}_\mu \otimes u)
\]
using the fact that the Casimir element applied to $V_p$ is multiplication by the
constant $(\La_p, \La_p + 2\rho)$.
We have that $\frac{z_p \Omega^{(p,s)}}{z_p - z_s} = r^{(p,s)}(z_p/z_s) + \Omega_-^{(p,s)}$. We define
$C_\pm = \frac{1}{2}C_0 + \sum_{\al \in \Dl_\pm} C_\al$, so $C = C_+ + C_-$ and set $\Omega_\pm^{(p,p)} =
C_\pm^{(p)}$, which is just a choice of ordering. Then the previous expression is equal to
\[
\left( \sum_{s \neq 0,p} r^{(p,s)}(z_p/z_s) + \sum_{s=0}^n \Omega_-^{(p,s)} + \Omega_+^{(p,0)} + \frac{1}{2}
\left(C_+^{(p)} - C_-^{(p)}\right) \right) \Xi(\mu)(\bs{1}_\mu \otimes u).
\]
We note that $\sum_{s=0}^n \Omega_-^{(p,s)} = \frac{1}{2} \sum_{s=0}^n \Omega_0^{(p,s)}
+ \sum_{\al \in \Dl_+} e_{-\al}^{(p)}\left( \sum_{s=0}^n e_\al^{(s)} \right)$. Since $\Xi(\mu)(\bs{1}_\mu \otimes u)$
is singular, $\left( \sum_{s=0}^n e_\al^{(s)} \right) \Xi(\mu)(\bs{1}_\mu \otimes u) = 0$. Furthermore,
$\frac{1}{2} \left(\sum_{s=0}^n \Omega_0^{(p,s)}\right) \Xi(\mu)(\bs{1}_\mu \otimes u)$ is just
$\frac{1}{2} (\nu + \mu)^{(p)} \Xi(\mu)(\bs{1}_\mu \otimes u)$. We also note that $C_+ - C_- =
\sum_{\al \in \Dl_+} [e_\al, e_{-\al}]$ equals $2\rho$. We are left with
\begin{equation}
\label{eq:neartrig}
\left( \sum_{s \neq 0,p} r^{(p,s)}(z_p/z_s) + \frac{1}{2} (\nu + \mu)^{(p)} + \rho^{(p)} + \Omega_+^{(p,0)} \right)
\Xi(\mu)(\bs{1}_\mu \otimes u).
\end{equation}

For $\mu$ satisfying (\ref{eqn:shapgeneric}), each singular vector has an expansion
$\bs{1}_\mu \otimes v + \sum_{j,k > 0} (S_\mu^{-1})_{jk} F_j \otimes \omega(F_k)(\bs{1}_\mu \otimes v)$ for some nonzero $v \in V[\nu]$.
We must determine $v$ and can ignore the higher order terms.
The first three terms of (\ref{eq:neartrig}) do not act on the first factor $M_\mu$ at all. For the last, recall
that $\Omega_+^{(p,0)} = \frac{1}{2} \Omega_0^{(p,0)} + \sum_{\al \in \Dl_+} e_\al^{(p)} e_{-\al}^{(0)}$,
hence, $\Omega_+^{(p,0)} \Xi(\mu)(\bs{1}_\mu \otimes u) = \frac{1}{2} \mu^{(p)} \bs{1}_\mu \otimes u + higher \ order \ terms$.
So (\ref{eq:neartrig}) equals
\[
\Xi(\mu)(\bs{1}_\mu \otimes \left( \sum_{s \neq 0,p} r^{(p,s)}(z_p/z_s) + \frac{1}{2}\nu^{(p)} + \mu^{(p)} + \rho^{(p)}\right) u)
\]
as desired.

For any $\mu$ and $u\in V[\nu]_\mu$, we note that
\[
\left( \sum_{s \neq 0,p} r^{(p,s)}(z_p/z_s) + \frac{1}{2} (\nu + \la)^{(p)} + \rho^{(p)} + \Omega_+^{(p,0)} \right)
\Xi(\la)(\bs{1} \otimes u)
\]
is a $U(\h \oplus \n_-) \otimes V$-valued rational function of $\h^*$ regular at $\la = \mu$.
It is equal, modulo the annihilator of $\bs{1}_\la$, to
\[
\Xi(\la)\left(\bs{1} \otimes \mc{K}_p\left(z_1,\dots, z_n, \la + \rho + \frac{\nu}{2} \right) u\right),
\]
so
\[
\Xi(\mu)\left(\bs{1}_\mu \otimes \mc{K}_p\left(z_1,\dots, z_n, \mu + \rho + \frac{\nu}{2} \right) u\right)
\]
is defined, and the proposition holds.
\end{proof}

\begin{cor}
\label{cor:trigpreserves}
For $\mu \in \h^*$,
and $p = 1, \dots, n$,
the operator $\mc{K}_p\left(z, \mu + \rho + \frac{\nu}{2} \right)$
preserves $V[\nu]_\mu$.
\end{cor}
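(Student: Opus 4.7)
The strategy is to transfer the statement across the intertwiner $\Xi(\mu)$ to singular vectors in $M_\mu \otimes V$, where the characterization provided by Proposition~\ref{prop:singonlyif} immediately forces membership in $V[\nu]_\mu$.

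Fix $u \in V[\nu]_\mu$. By Proposition~\ref{prop:singif} and the remark that $\Xi(\mu)(\bs{1}_\mu \otimes u)$ is singular, this vector is a well-defined element of $\sing M_\mu \otimes V[\mu+\nu]$. Since the rational Gaudin operator $K_p(0, z_1, \dots, z_n)$ commutes with the diagonal $\g$-action, the scalar-shifted operator
\[
z_p K_p(0, z_1, \dots, z_n) + \tfrac{1}{2}(\La_p, \La_p + 2\rho)
\]
preserves the space of singular vectors of weight $\mu+\nu$. Applying Proposition~\ref{prop:rattrig} then yields that
\[
\Xi(\mu)\bigl(\bs{1}_\mu \otimes \mc{K}_p(z, \mu + \rho + \tfrac{\nu}{2})\,u\bigr)
\]
is a singular vector in $M_\mu \otimes V[\mu+\nu]$.

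To finish, I expand this singular vector in a homogeneous PBW basis $\{F_j\}_{j \ge 0}$ of $U(\n_-)$ with $F_0 = \bs{1}$. On the zero-weight block $U(\n_-)[0] = \C\,\bs{1}$ one has $S_\mu(\bs{1}, \bs{1}) = 1$, so $(S_\mu^{-1})_{00} = 1$, and the coefficient of $\bs{1}_\mu$ in the first tensor slot of $\Xi(\mu)(\bs{1}_\mu \otimes w)$ is exactly $w$ for any $w \in V[\nu]$. Taking $w = \mc K_p(z, \mu + \rho + \tfrac{\nu}{2})\,u$, and noting that $\mc K_p$ preserves the weight subspace $V[\nu]$ (every summand in its definition commutes with the $\h$-action on $V$), we obtain a singular vector of the shape
\[
\bs{1}_\mu \otimes \mc{K}_p(z, \mu + \rho + \tfrac{\nu}{2})\,u \;+\; \sum_{j>0} F_j \bs{1}_\mu \otimes u_j
\]
for some $u_j \in V$. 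Proposition~\ref{prop:singonlyif} now forces the leading coefficient to lie in $V[\nu]_\mu$, which is the content of the corollary.

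The only potential obstacle is that, at non-generic $\mu$, both sides of the identity in Proposition~\ref{prop:rattrig} must actually be defined; this is handled by the final paragraph of the proof of that proposition via regular extension in $\la$, so no additional analysis is required here. The argument is otherwise purely formal.
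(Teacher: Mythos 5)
Your proof is correct and follows essentially the same route as the paper: both arguments transfer the question through Proposition \ref{prop:rattrig} to the shifted rational Gaudin operator acting on singular vectors of $M_\mu \otimes V[\mu+\nu]$. The only (harmless) difference is in how you close: the paper simply observes that $\Xi(\mu)\bigl(\bs{1}_\mu \otimes \mc{K}_p(z,\mu+\rho+\tfrac{\nu}{2})u\bigr)$ is well-defined, while you make the final inference explicit by reading off the leading coefficient of the resulting singular vector and invoking Proposition \ref{prop:singonlyif}, which is a slightly more careful rendering of the same idea.
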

\begin{proof}
Since $\mc{K}_p(z, \mu + \rho + \frac{\nu}{2})$ commutes with $\h$, we need only
check that
\[
\Xi(\mu) \left(\bs{1}_\mu \otimes \mc{K}_p\left(z, \mu + \rho + \frac{\nu}{2} \right) u\right)
\]
is well-defined for $u \in V[\nu]_{\mu}$. By Proposition \ref{prop:rattrig}, it equals
\[
\left( z_p K_p(0,z_1,\dots, z_n) + \frac{1}{2}(\La_p, \La_p + 2\rho)\right) \Xi(\mu)(\bs{1}_\mu \otimes u)
\]
which is well-defined.
\end{proof}

\begin{cor}
\label{cor:singularandxi}
Let $\bs{1}_\mu \otimes u + \sum_{j>0} F_j \bs{1}_\mu \otimes u_j \in \sing M_\mu \otimes V[\mu + \nu]$
be an eigenvector of $K_p(0, z_1, \dots, z_p)$ with eigenvalue $\eps_p$ for $p =1, \dots, n$.
Then the leading term $u \in V[\nu]$ is an eigenvector of $\mc{K}_p(z_1, \dots, z_n, \mu + \rho + \frac{\nu}{2})$
with eigenvalue $z_p \eps_p + \frac{1}{2}(\La_p, \La_p + 2 \rho)$.
\end{cor}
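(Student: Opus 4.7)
The plan is to derive the statement by applying Proposition \ref{prop:rattrig} to the decomposition of the given singular vector provided by Proposition \ref{prop:singdifference}, and then extract the eigenvalue equation by tracking the top weight component.

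First, by Proposition \ref{prop:singonlyif} the leading term $u$ automatically lies in $V[\nu]_\mu$, so the operator $\mc{K}_p\bigl(z,\mu+\rho+\tfrac{\nu}{2}\bigr)$ acts on $u$ by Corollary \ref{cor:trigpreserves}, and $\Xi(\mu)(\bs{1}_\mu\otimes u)$ is well-defined by Proposition \ref{prop:singif}. Denote by $S$ the given singular vector, and write
\[
S \;=\; \Xi(\mu)(\bs{1}_\mu \otimes u) + W, \qquad W \in \mathrm{Ker}(S_\mu)\otimes V,
\]
using Proposition \ref{prop:singdifference}. Set $c_p := z_p\eps_p + \tfrac{1}{2}(\La_p,\La_p+2\rho)$ and apply the operator $z_p K_p(0,z_1,\dots,z_n) + \tfrac{1}{2}(\La_p,\La_p+2\rho)$ to both sides. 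The hypothesis gives $K_p S = \eps_p S$, so the left side equals $c_p S = c_p\Xi(\mu)(\bs{1}_\mu \otimes u) + c_p W$. On the right, Proposition \ref{prop:rattrig} converts the $\Xi$-term into $\Xi(\mu)\bigl(\bs{1}_\mu\otimes \mc{K}_p\bigl(z,\mu+\rho+\tfrac{\nu}{2}\bigr) u\bigr)$.

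Next I would observe that the operator $z_p K_p + \tfrac{1}{2}(\La_p,\La_p+2\rho)$ preserves $\mathrm{Ker}(S_\mu)\otimes V$: indeed $\mathrm{Ker}(S_\mu)$ is a $\g$-submodule of $M_\mu$, and $K_p$ is built from $\g$-actions on the tensor factors, so it carries $\mathrm{Ker}(S_\mu)\otimes V$ into itself. Therefore the equation rearranges to
\[
\Xi(\mu)\Bigl(\bs{1}_\mu \otimes \bigl(\mc{K}_p\bigl(z,\mu+\rho+\tfrac{\nu}{2}\bigr) - c_p\bigr) u\Bigr) \;\in\; \mathrm{Ker}(S_\mu)\otimes V.
\]

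The final step, which is the main obstacle, is to conclude from this containment that $\bigl(\mc{K}_p - c_p\bigr) u = 0$. For this I would use the weight filtration: by construction, the component of $\Xi(\mu)(\bs{1}_\mu\otimes w)$ in $M_\mu[\mu]\otimes V[\nu]$ is exactly $\bs{1}_\mu\otimes w$ (only the $j=\ell=0$ term in the defining sum contributes at top weight, since $F_j, F_\ell$ must have equal weight and $(S_\mu^{-1})_{00}=1$). On the other hand $\mathrm{Ker}(S_\mu)[\mu]=0$ because $\bs{1}_\mu$ is not in the maximal proper submodule. Matching the $M_\mu[\mu]\otimes V[\nu]$ components of both sides therefore forces $\bigl(\mc{K}_p\bigl(z,\mu+\rho+\tfrac{\nu}{2}\bigr) - c_p\bigr) u = 0$, which is the claimed eigenvalue equation.
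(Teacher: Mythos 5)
Your proposal is correct and follows essentially the same route as the paper's proof: decompose the singular vector via Proposition \ref{prop:singdifference}, apply the shifted rational Gaudin operator, use that $\mathrm{Ker}(S_\mu)\otimes V$ is preserved, and compare leading (top-weight) terms via Proposition \ref{prop:rattrig}. The only difference is that you spell out the final step --- that the $M_\mu[\mu]\otimes V[\nu]$ component of $\Xi(\mu)(\bs{1}_\mu\otimes w)$ is $\bs{1}_\mu\otimes w$ and that $\mathrm{Ker}(S_\mu)[\mu]=0$ --- which the paper leaves implicit.
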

\begin{proof}
By Proposition \ref{prop:singonlyif}, $u$ belongs to $V[\nu]_\mu$. By Proposition
\ref{prop:singif}, the vector $\Xi(\mu)(\bs{1}_\mu \otimes u)$ is well-defined.
By Proposition {prop:singdifference}, the difference
\[
\tilde{u} = \left( \bs{1}_\mu \otimes u + \sum_{j>0} F_j \bs{1}_\mu \otimes u_j \right) - \Xi(\mu)(\bs{1}_\mu \otimes u)
\]
belongs to $\sing {\rm Ker}(S_\mu) \otimes V [\mu +\nu]$. Since $U(\g)$ preserves ${\rm Ker}(S_\mu)$,
we have
\[
\left(z_p K_p(0,z_1,\dots, z_n) + \frac{1}{2}(\La_p, \La_p + 2\rho)\right) \tilde{u} \in {\rm Ker}(S_\mu) \otimes V
\]
so the leading term is zero. By Proposition \ref{prop:rattrig}, this leading term is
\[
\left(z_p \eps_p + \frac{1}{2}(\La_p, \La_p + 2\rho)\right) -
\mc{K}_p\left(z_1, \dots, z_n, \mu + \rho + \frac{\nu}{2}\right).
\]
\end{proof}

\subsection{Eigenfunctions of KZB operators in the trigonometric limit}
\label{sect:eigenfunctions}
We look for eigenfunctions to $H_0$ in the space $\mc{A}(\xi) \otimes V[0]$ defined as
follows.

For $\beta \in \h^*$, let $X_\beta(\la) = e^{-2\pi i \beta(\la)}$.
For a simple root $\al_j$, let $X_j = X_{\al_j}$.
Denote by $\mc{A}$ the algebra of functions on $\h$ which can be expressed
as meromorphic functions of variables $X_1, \dots, X_r$ with
poles only on $\cup_{\al \in \Dl} \{X_\al = 1\}$.
For $\xi \in \h^*$, we define $\mc{A}(\xi)$ to be the vector space of
functions of the form $e^{2\pi i \xi(\la)} \phi$
with $\phi \in \mc{A}$.

The algebra $\mc{A}$ is a subalgebra of the algebra
 $\C[[X_1, \dots, X_r]]$
of formal power series. \  For
$\phi(X_1, \dots, X_r) \in \mc{A}$, we define
 its leading term to be
${\rm const}_{\{X_j\}}(\phi) = \phi(0, \dots, 0)$.
For $e^{2\pi i \xi(\la)} \phi \in \mc{A}(\xi)$,
we define its  leading term to be $\phi(0, \dots, 0)$.

Let $E(\xi) \subset \mc{A}(\xi) \otimes V[0]$ be the
subspace of eigenvectors to $H_0$ with eigenvalue $\pi i (\xi, \xi)$.

\begin{prop}
\cite{FV2}
Let $\xi \in \h$ satisfy
\begin{equation}
\label{eqn:functiongeneric}
(\xi - \beta, \xi - \beta) \neq (\xi, \xi) \text{ for all nonzero } \beta \in Q_+\ .
\end{equation}
Then for $u \in V[0]$, there exists a unique $\psi_u^\xi = e^{2\pi i\xi(\la)}
\phi_u^\xi \in \mc{A}(\xi) \otimes V[0]$ such that
\bea
H_0 \psi_u^\xi = \pi i (\xi,\xi) \psi_u^\xi\quad
\text{and}\quad {\rm const}_{\{X_j\}}(\phi_u^\xi) = u .
\eea
\end{prop}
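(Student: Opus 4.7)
The plan is to construct $\phi_u^\xi$ by recursion on its Fourier modes. Write $\phi_u^\xi = \sum_{\beta \in Q_+} \phi_\beta X^\beta$ with $X^\beta = \prod_j X_j^{n_j}$ for $\beta = \sum_j n_j \al_j$ and coefficients $\phi_\beta \in V$. To see how $H_0$ acts on the monomial $e^{2\pi i \xi(\la)} X^\beta = e^{2\pi i(\xi-\beta)(\la)}$, the Laplacian piece gives $\triangle e^{2\pi i(\xi-\beta)(\la)} = -4\pi^2(\xi-\beta,\xi-\beta)\,e^{2\pi i(\xi-\beta)(\la)}$, and from $\sin^2(\pi\al(\la)) = -\tfrac{1}{4}(X_\al^{1/2}-X_\al^{-1/2})^2$ one obtains the power series expansion $\pi^2/\sin^2(\pi\al(\la)) = -4\pi^2 \sum_{k\ge 1} k\,X_\al^k$. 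Thus the $\al$-summand of the second term of $H_0$ shifts $X^\beta$ to $X^{\beta+k\al}$ while multiplying by $k(e_\al e_{-\al}+e_{-\al}e_\al)$.

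Equating the coefficient of $X^\beta$ in $H_0 \psi_u^\xi = \pi i(\xi,\xi)\psi_u^\xi$ produces the recursion
\[
\bigl[(\xi,\xi) - (\xi-\beta,\xi-\beta)\bigr]\phi_\beta = -\sum_{\al \in \Dl_+}\sum_{\substack{k \ge 1\\ \beta-k\al \in Q_+}} k\,(e_\al e_{-\al} + e_{-\al} e_\al)\,\phi_{\beta - k\al}.
\]
The operator $e_\al e_{-\al} + e_{-\al} e_\al$ preserves each weight space, so starting from $\phi_0 = u \in V[0]$, which is forced by the leading-term normalization, every $\phi_\beta$ lies in $V[0]$. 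Hypothesis \eqref{eqn:functiongeneric} ensures the scalar on the left is nonzero for every nonzero $\beta \in Q_+$, so the recursion determines $\phi_\beta$ uniquely from the strictly lower terms $\phi_{\beta-k\al}$. This yields both existence and uniqueness of a formal-power-series solution of the required form; uniqueness within $\mc{A}(\xi) \otimes V[0]$ follows because $\mc{A}$ embeds into $\C[[X_1,\ldots,X_r]]$.

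The remaining step is to show that the formally-defined series actually lies in $\mc{A}(\xi) \otimes V[0]$, i.e.\ sums to a rational function of $X_1,\ldots,X_r$ with poles only along the divisors $X_\al = 1$ for $\al \in \Dl$. This is the main obstacle, since a priori the recursion only produces a formal series. The key observation, following \cite{FV2}, is that the only singularities appearing in the coefficients of $H_0$ are the factors $1/(1-X_\al)^2$ coming from $\pi^2/\sin^2(\pi\al)$, which are permitted by the definition of $\mc{A}$; combined with the finite-dimensionality of $V[0]$ this allows one to check, order by order and with a uniform denominator of the form $\prod_\al (1-X_\al)^N$, that the recursion closes within $\mc{A}(\xi) \otimes V[0]$ rather than escaping to a larger completion.
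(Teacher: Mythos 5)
Your recursion for the Fourier coefficients is set up correctly: the computation $\triangle e^{2\pi i(\xi-\beta)(\la)}=-4\pi^2(\xi-\beta,\xi-\beta)e^{2\pi i(\xi-\beta)(\la)}$, the expansion $\pi^2/\sin^2(\pi\al(\la))=-4\pi^2\sum_{k\ge1}kX_\al^k$, and the resulting triangular system all check out, and condition (\ref{eqn:functiongeneric}) is exactly what makes the scalar $(\xi,\xi)-(\xi-\beta,\xi-\beta)=2(\xi,\beta)-(\beta,\beta)$ nonzero. This establishes existence and uniqueness of the \emph{formal power series} solution, and hence uniqueness in $\mc{A}(\xi)\otimes V[0]$; it is the Heckman--Opdam-style argument the paper itself alludes to immediately after the statement.

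The gap is in your last paragraph. Membership in $\mc{A}(\xi)\otimes V[0]$ is not a property that can be verified ``order by order'': every formal power series agrees to any finite order with a function of the form $P(X)/\prod_\al(1-X_\al)^N$, so no finite amount of checking distinguishes an element of $\mc{A}$ from a divergent or non-meromorphic series. The observation that the coefficients of $H_0$ only have poles on $\{X_\al=1\}$ does not propagate through the recursion either, because the recursion also divides by the scalars $2(\xi,\beta)-(\beta,\beta)$, which have no uniform lower bound, so neither convergence nor a uniform denominator follows. This is precisely why the paper (following \cite{FV2}) does not argue this way: it produces the solution by an independent closed-form construction --- the function $e^{2\pi i\xi(\la)}\bigl(u+\sum_{j>0}A_X(F_j)u_j\bigr)$ built from a singular vector in $\sing M_{\xi-\rho}\otimes V[0]$ (Proposition \ref{prop:eigenfunct}), equivalently the normalized trace of the intertwiner $\Phi_u$ --- which is manifestly in $\mc{A}(\xi)\otimes V[0]$ by the explicit form of $A_X$, and which must coincide with your formal solution by the uniqueness you already proved. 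Without some such explicit construction (or a genuine convergence-plus-meromorphic-continuation argument), the existence half of the proposition is not established.
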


The proof of existence and uniqueness of $\psi_u^\xi$ as a formal power series is similar
to Heckman and Opdam \cite{HO}. Etingof \cite{E} gave a representation theoretic
construction of eigenfunctions
to $H_0$ for $\xi$ satisfying (\ref{eqn:functiongeneric}). Let $M_{\xi - \rho}$
be the Verma module of highest weight $\xi - \rho$, where $\xi$ satisfies
(\ref{eqn:shapgeneric}). Then for each $u \in V[0]$, there is a unique
homomorphism $\Phi_u: M_{\xi - \rho} \arr M_{\xi -\rho} \otimes V$ such that
$\Phi_u(\bs{1}_{\xi- \rho}) = \Xi(\xi - \rho)(\bs{1}_{\xi -\rho} \otimes u)$. Then as
formal power series,
\[
\psi_u^\xi(\la) = \frac{{\rm tr}_{M_{\xi -\rho}} \Phi_u \exp(2\pi i \la)}
{{\rm tr}_{M_{\xi - \rho}} \exp(2\pi i \la)} .
\]
Felder and Varchenko \cite{FV2} gave the explicit calculation of this
function and showed that it lies in $A(\xi) \otimes V[0]$. We recall this construction.

We first define a linear map $A_X: \ U(\n_-) \arr \mc{A} \otimes U(\n_-)$.
We set $A_X(\bs{1}) = \bs{1}$ for $\bs{1}$ the identity element of $U(\n_-)$.

For an element of $U(\n_-)$ of the form $F_{\beta_1} \cdots F_{\beta_m}$
where each $F_{\beta_k}$ belongs to  $\g_{-\beta_k}$ with $\beta_k \in \Dl_+$,
we set $A_X(F_{\beta_1} \cdots F_{\beta_m}) =
\sum_{\sigma \in S_m} A_X^\sigma(F_{\beta_1} \cdots F_{\beta_m})$,
where $S_m$ is the symmetric group on $m$ symbols, and $A_X^\sigma$ is defined by
\[
A_X^\sigma(F_{\beta_1} \cdots F_{\beta_m}) =
\prod_{k=1}^m \frac{X_{\beta_{\sigma(k)}}^{a^\sigma_k +1}}{1 - X_{\beta_{\sigma(1)}} \cdots X_{\beta_{\sigma(k)}}}
F_{\beta_{\sigma(1)}} \cdots F_{\beta_{\sigma(m)}}.
\]
For given $\sigma \in S_m$, the number $a^\sigma_k$ is defined to be
$\sum_{j=k}^{m-1} d^\sigma_j$, where
$d^\sigma_j = 1$ if $\sigma(j) > \sigma(j+1)$ and $d^\sigma_j = 0$ otherwise.

\begin{lem}
The operator $A_X$ is well-defined. In other words the relation
\[
A_X(F_{\beta_1} \cdots F_{\beta_\ell}F_{\beta_{\ell+1}} \cdots F_{\beta_m}) -
A_X(F_{\beta_1} \cdots F_{\beta_{\ell+1}}F_{\beta_\ell} \cdots F_{\beta_m}) =
A_X(F_{\beta_1} \cdots [F_{\beta_\ell}F_{\beta_{\ell+1}}] \cdots F_{\beta_m})
\]
holds for any collection $F_{\beta_1}, \dots, F_{\beta_m}$ with
each $\beta_k \in \Dl_+$ and $F_{\beta_k} \in \g_{-\beta_k}$.
\end{lem}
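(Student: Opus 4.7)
The plan is to verify the claimed identity by pairing summands of the two sums on the left-hand side. For each $\sigma \in S_m$ in the expansion of $A_X(F_{\beta_1} \cdots F_{\beta_\ell} F_{\beta_{\ell+1}} \cdots F_{\beta_m})$, I pair it with $\tau = s_\ell \circ \sigma \in S_m$ in the expansion of $A_X(F_{\beta_1} \cdots F_{\beta_{\ell+1}} F_{\beta_\ell} \cdots F_{\beta_m})$, where $s_\ell$ denotes the transposition swapping $\ell$ and $\ell+1$. Letting $\beta'_k$ denote the indices of the swapped input, one checks $\beta'_{\tau(k)} = \beta_{\sigma(k)}$ for every $k$, so the $F$-word $F_{\beta_{\sigma(1)}} \cdots F_{\beta_{\sigma(m)}}$ and each denominator $1 - X_{\beta_{\sigma(1)}} \cdots X_{\beta_{\sigma(k)}}$ are identical for $A_X^\sigma(\text{first word})$ and $A_X^\tau(\text{second word})$. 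Only the numerator exponents $a^\sigma_k$ and $a^\tau_k$ can differ.

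A direct comparison of the descent indicators $d^\sigma_j$ and $d^\tau_j$ shows that they agree for every $j$ except possibly when $\{\sigma(j),\sigma(j+1)\} = \{\ell,\ell+1\}$, i.e., when $\ell$ and $\ell+1$ are adjacent in the one-line notation of $\sigma$. If they are not adjacent, then every $a^\sigma_k = a^\tau_k$ and the paired terms cancel in the difference. So nonzero contributions to $A_X(\text{first word}) - A_X(\text{second word})$ come only from permutations placing $\ell$ and $\ell+1$ at consecutive output positions $(j_0, j_0+1)$; these come in two flavors, type B1 with $\sigma(j_0)=\ell$, $\sigma(j_0+1)=\ell+1$, and type B2 with the opposite order.

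For a B1 permutation, the shift $a^\sigma_k = a^\tau_k - 1$ for $k \le j_0$ produces, upon subtraction of the two numerators, a common factor of $(1 - X_{\beta_{\sigma(1)}} \cdots X_{\beta_{\sigma(j_0)}})$ that telescopes against the denominator at $k = j_0$. The resulting expression has exactly the shape of an $A_X^{\bar\sigma}$-summand for the length-$(m-1)$ word in which $F_{\beta_\ell} F_{\beta_{\ell+1}}$ has been fused at position $j_0$ into a single factor in $\g_{-(\beta_\ell+\beta_{\ell+1})}$, using $X_{\beta_\ell} X_{\beta_{\ell+1}} = X_{\beta_\ell+\beta_{\ell+1}}$. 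The B2 contribution produces the same shape but with $F_{\beta_{\ell+1}} F_{\beta_\ell}$ in place of $F_{\beta_\ell} F_{\beta_{\ell+1}}$; summing the two yields the commutator $[F_{\beta_\ell}, F_{\beta_{\ell+1}}]$ at the fused slot, and the resulting bijection between $\{\sigma \in S_m : \ell, \ell+1 \text{ adjacent}\}$ and $S_{m-1}$ reproduces $A_X$ applied to the reduced word.

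The main obstacle will be checking that after the telescoping cancellation, the exponents $a^\sigma_k$ of the surviving factors correctly match the statistic $a^{\bar\sigma}_k$ of the induced permutation $\bar\sigma \in S_{m-1}$, including at and around the newly merged position, since descent counts depend globally on neighboring values. The base case $m=2$ provides a useful sanity check: there the difference collapses to $\tfrac{X_{\beta_1} X_{\beta_2}}{1 - X_{\beta_1} X_{\beta_2}}[F_{\beta_1}, F_{\beta_2}]$, which equals $A_X([F_{\beta_1}, F_{\beta_2}])$ on the nose via $X_{\beta_1+\beta_2} = X_{\beta_1} X_{\beta_2}$, confirming both the telescoping mechanism and the correct matching of statistics.
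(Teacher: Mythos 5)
Your proposal follows essentially the same route as the paper's proof: pair $\sigma$ with $\tau_{\ell,\ell+1}\circ\sigma$, observe that the denominators and the $F$-word agree so that only permutations placing $\ell$ and $\ell+1$ at adjacent output positions contribute, telescope the factor $(1 - X_{\beta_{\sigma(1)}}\cdots X_{\beta_{\sigma(j_0)}})$ against the denominator at $k=j_0$, and sum the two adjacency orders to produce the commutator. The bookkeeping step you flag as the main obstacle is precisely what the paper finishes by passing to the variables $Y$ with $Y_{\beta_\ell}=X_{\beta_\ell}X_{\beta_{\ell+1}}$ and checking that the surviving descent statistic $b^{\widehat\sigma}_k$ (the sum of the $d^\sigma_j$ with $d^\sigma_{\sigma^{-1}(\ell)}=0$ omitted) is that of the induced permutation $\widehat\sigma\in S_{m-1}$, so your outline is correct and matches the paper's argument.
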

\begin{proof}
For each $\sigma \in S_m$, there exists some $\sigma'$ such that
\beq
\label{eqn:Asigmas}
A_X^\sigma(F_{\beta_1} \cdots F_{\beta_\ell}F_{\beta_{\ell+1}} \cdots F_{\beta_m}) -
A_X^{\sigma'}(F_{\beta_1} \cdots F_{\beta_{\ell+1}}F_{\beta_\ell} \cdots F_{\beta_m})
= B^\sigma(X) F_{\beta_{\sigma(1)}} \cdots F_{\beta_{\sigma(m)}}
\eeq
with $B^\sigma(X) \in \mc{A}$. In fact, $\sigma' = \tau_{\ell, \ell+1} \circ \sigma$ where
$\tau_{\ell,\ell+1}$ is the transposition of $\ell$ and $\ell+1$.
It is clear that
\[
A_X(F_{\beta_1} \cdots F_{\beta_\ell}F_{\beta_{\ell+1}} \cdots F_{\beta_m}) -
A_X(F_{\beta_1} \cdots F_{\beta_{\ell+1}}F_{\beta_\ell} \cdots F_{\beta_m}) =
\sum_{\sigma \in S_m} B^\sigma(X) F_{\beta_{\sigma(1)}} \cdots F_{\beta_{\sigma(m)}}.
\]

To calculate $B^\sigma(X)$, we note that the denominators of
$A^\sigma_X(F_{\beta_1} \cdots F_{\beta_\ell}F_{\beta_{\ell+1}} \cdots F_{\beta_m})$ and
$A^{\sigma'}_X(F_{\beta_1} \cdots F_{\beta_{\ell+1}}F_{\beta_\ell} \cdots F_{\beta_m})$ are
the same, since these only depend on the order of the $F_\beta$ appearing in
$F_{\beta_{\sigma(1)}} \cdots F_{\beta_{\sigma(m)}}$ and
$F_{\beta_{\sigma'(1)}} \cdots F_{\beta_{\sigma'(m)}}$.
The difference between $A^\sigma$ and $A^{\sigma'}$ comes from the coefficients
$a^\sigma_k$ and $a^{\sigma'}_k$, depending on $d^\sigma_j$ and
$d^{\sigma'}_j$ for $j \leq k$.

We first consider $\sigma$ for which the factors $F_{\beta_\ell}$ and
$F_{\beta_{\ell+1}}$ are not adjacent in the expression
$F_{\beta_{\sigma(1)}} \cdots F_{\beta_{\sigma(m)}}$,
in other words, $\sigma$ for which $|\sigma^{-1}(\ell) - \sigma^{-1}(\ell+1)| > 1$.
For such $\sigma$, exchanging $F_{\beta_\ell}$ and $F_{\beta_{\ell+1}}$
produces no change in the relative sizes of the indices of adjacent factors in
$F_{\beta_{\sigma(1)}} \cdots F_{\beta_{\sigma(m)}}$, so
$d^\sigma_j = d^{\sigma'}_j$ for all $j$ and
$a^\sigma_k = a^{\sigma'}_k$ for all $k$.
This tells us that for these $\sigma$, $B^\sigma(X) = 0$.

For the remaining $\sigma$,
$F_{\beta_\ell}$ and $F_{\beta_{\ell+1}}$ are adjacent in
$F_{\beta_{\sigma(1)}} \cdots F_{\beta_{\sigma(m)}}$. We first consider $\sigma$ such
that
$F_{\beta_\ell} F_{\beta_{\ell+1}}$ appears, or $\sigma^{-1}(\ell) + 1 = \sigma^{-1}(\ell+1)$.
We calculate $d^\sigma_{\sigma^{-1}(\ell)} = 0$, since
$\sigma(\sigma^{-1}(\ell))$ is less than $\sigma(\sigma^{-1}(\ell) + 1)$.
We see that
$d^{\sigma'}_{\sigma^{-1}(\ell)} = d^{\sigma'}_{\sigma'^{-1}(\ell+1)} = 1$.
For all
$j \neq \sigma^{-1}(\ell)$, it is clear that $d^\sigma_j = d^{\sigma'}_j$.
Thus $a^{\sigma'}_k = a^\sigma_k + 1$ for $k \leq \sigma^{-1}(\ell)$ and
$a^{\sigma'}_k = a^\sigma_k$ for $k > \sigma^{-1}(\ell)$. For such $\sigma$,
\[
A^{\sigma'}_X(F_{\beta_1} \cdots F_{\beta_{\ell+1}}F_{\beta_\ell} \cdots F_{\beta_m})
= X_{\beta_{\sigma(1)}} \cdots X_{\beta_{\sigma(\sigma^{-1}(\ell))}}
A^\sigma_X(F_{\beta_1} \cdots F_{\beta_{\ell}}F_{\beta_{\ell+1}} \cdots F_{\beta_m})
\]
so
\[
B^\sigma(X) F_{\beta_{\sigma(1)}} \cdots F_{\beta_{\sigma(m)}} =
(1 - X_{\beta_{\sigma(1)}} \cdots X_{\beta_{\sigma(\sigma^{-1}(\ell))}})
A^\sigma_X(F_{\beta_1} \cdots F_{\beta_{\ell}}F_{\beta_{\ell+1}} \cdots F_{\beta_m}).
\]

We must also consider $\sigma$ such that $F_{\beta_{\ell+1}} F_{\beta_{\ell}}$ appears
in $F_{\beta_{\sigma(1)}} \cdots F_{\beta_{\sigma(m)}}$. For these $\sigma$,
we see that $\sigma'$ is in the previous category.
Then $B^\sigma(X) = - B^{\sigma'}(X)$, so
\[
B^\sigma(X) F_{\beta_{\sigma(1)}} \cdots F_{\beta_{\sigma(m)}}
+ B^{\sigma'}(X) F_{\beta_{\sigma'(1)}} \cdots F_{\beta_{\sigma'(m)}}
B^\sigma(X) F_{\beta_{\sigma(1)}} \cdots [F_{\beta_\ell}, F_{\beta_{\ell+1}}]
\cdots F_{\beta_{\sigma(m)}}.
\]

We can write $B^\sigma(X)$ more explicitly
with variables $Y_{\beta_k}$, where $Y_{\beta_k} = X_{\beta_k}$ for
$k < \ell$, $Y_{\beta_\ell} = X_{\beta_\ell} X_{\beta_{\ell + 1}}$, and
$Y_{\beta_{k}} = X_{\beta_{k+1}}$ for $k > \ell$. Then
\[
B^\sigma(X) = \prod_{k=1}^{m-1} \frac{Y_{\beta_{\widehat{\sigma}(k)}}^{b^{\widehat{\sigma}}_k}}
{1 - Y_{\beta_{\widehat{\sigma}(1)}} \cdots Y_{\beta_{\widehat{\sigma}(k)}}}
\]
where $\widehat{\sigma}$ is the element of $S_{m-1}$ that acts like $\sigma$, but
treats $\ell$ and $\ell+1$ as a unit, and $b^{\widehat{\sigma}}_k$ is a sum of the
first $k$ numbers $d^\sigma_j$, skipping $d^\sigma_{\sigma^{-1}(\ell)}$, which is zero.
In fact,
\[
B^\sigma(X) F_{\beta_{\sigma(1)}} \cdots [F_{\beta_\ell}, F_{\beta_{\ell+1}}]
\cdots F_{\beta_{\sigma(m)}}
=
A^{\widehat{\sigma}}_Y(F_{\beta_1} \cdots
[F_{\beta_\ell}, F_{\beta_{\ell+1}}] \cdots F_{\beta_m}),
\]
and
\[
\sum_{\widehat{\sigma}\in S_m} A^{\widehat{\sigma}}_Y(F_{\beta_1} \cdots
[F_{\beta_\ell}, F_{\beta_{\ell+1}}] \cdots F_{\beta_m}) = A_Y(F_{\beta_1} \cdots
[F_{\beta_\ell}, F_{\beta_{\ell+1}}] \cdots F_{\beta_m}).
\]
\end{proof}

\begin{prop}
\cite{FV2}
\label{prop:eigenfunct}
For $\bs{1}_{\xi - \rho} \otimes u + \sum_{j>0} F_j \otimes u_j \in
\sing M_{\xi - \rho} \otimes V [0]$,
the function
\bea
\psi(\la) = e^{2 \pi i \xi(\la)}(u + \sum_{j>0} A_X(F_{j}) u_{j})
\eea
belongs to $E(\xi)$.
\end{prop}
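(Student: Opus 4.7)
The plan is to invoke the trace construction recalled just before the proposition. For $\xi$ satisfying \Ref{eqn:shapgeneric}, the given singular vector $w = \bs{1}_{\xi-\rho} \otimes u + \sum_{j>0} F_j \bs{1}_{\xi-\rho} \otimes u_j$ determines a unique $\g$-module homomorphism $\Phi: M_{\xi-\rho} \arr M_{\xi-\rho} \otimes V$ with $\Phi(\bs{1}_{\xi-\rho}) = w$, and I would set
\[
\psi(\la) \ = \ \frac{{\rm tr}_{M_{\xi-\rho}}(\Phi \cdot e^{2\pi i \la})}{{\rm tr}_{M_{\xi-\rho}}\, e^{2\pi i \la}}.
\]
The denominator equals the Verma character $e^{2\pi i(\xi-\rho)(\la)}\prod_{\al\in\Dl_+}(1-X_\al)^{-1}$, so $\psi$ a priori belongs to $\mc{A}(\xi) \otimes V[0]$ up to simplifying the $A_X^\sigma$-denominators against the Weyl denominator.

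First, I would verify the eigenvalue property. Since $\Phi$ intertwines the $\g$-actions, and the quadratic Casimir $C$ acts on $M_{\xi-\rho}$ by the scalar $(\xi-\rho,\xi-\rho+2\rho) = (\xi,\xi) - (\rho,\rho)$, the trace with $e^{2\pi i \la}$ of $C$ acting on $M_{\xi-\rho}\otimes V$ through the second factor via $\Phi$ produces exactly the combinations $e_\al e_{-\al}+e_{-\al}e_\al$ appearing in the trigonometric potential of $H_0$. The Laplacian $\triangle$ applied to $\psi$, combined with the derivative of the Weyl denominator, produces the shift $(\rho,\rho)$ cancelling the $-(\rho,\rho)$ above, leaving $H_0\psi = \pi i(\xi,\xi)\psi$.

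Second, I would compute the trace explicitly in a PBW basis $\{F_{\beta_1}\cdots F_{\beta_m}\bs{1}_{\xi-\rho}\}$ of $M_{\xi-\rho}$. By $\g$-equivariance, $\Phi(F_{\beta_1}\cdots F_{\beta_m}\bs{1}_{\xi-\rho}) = \Delta(F_{\beta_1}\cdots F_{\beta_m})\cdot w$. Extracting the coefficient of $F_{\beta_1}\cdots F_{\beta_m}\bs{1}_{\xi-\rho}$ in the first tensor factor and summing over PBW multiplicities with weights $e^{2\pi i \la}$ produces, after dividing by the character of $M_{\xi-\rho}$, precisely $e^{2\pi i\xi(\la)}(u+\sum_{j>0}A_X(F_j)u_j)$: geometric series in $X_{\beta_{\sigma(1)}}\cdots X_{\beta_{\sigma(k)}}$ coming from summing over PBW tails yield the denominators of $A_X^\sigma$, while the sum over orderings of how the $F_\beta$'s are distributed across the coproduct matches the sum over $\sigma\in S_m$.

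The main obstacle will be the combinatorial bookkeeping: verifying that the inversion exponents $a_k^\sigma$ in the numerators of $A_X^\sigma$ arise correctly from PBW reordering in the iterated coproduct $\Delta(F_{\beta_1}\cdots F_{\beta_m})$. This is essentially dual to the consistency check carried out in the preceding lemma. For $\xi$ violating \Ref{eqn:shapgeneric}, the conclusion extends by analytic continuation, since the singular vector depends analytically on $\xi$, the operator $A_X$ and $H_0$ are independent of $\xi$, and $E(\xi)$ is cut out by an analytic family of equations.
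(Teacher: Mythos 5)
The paper itself gives no proof of this proposition --- it is quoted from \cite{FV2} --- but the construction recalled immediately before its statement, namely the normalized trace
${\rm tr}_{M_{\xi-\rho}}\bigl(\Phi\, e^{2\pi i\la}\bigr)\big/{\rm tr}_{M_{\xi-\rho}}\,e^{2\pi i\la}$
of the intertwiner attached to the singular vector, is exactly the strategy you propose, so your route coincides with the intended one. Two remarks: the entire substance of the statement sits in the step you defer (matching the PBW-trace expansion, its geometric series and the inversion exponents $a_k^\sigma$, against $A_X^\sigma$), so as written this is a correct plan rather than a completed proof; and since a singular vector of weight $\xi-\rho$ in $M_{\xi-\rho}\otimes V$ determines the intertwiner for \emph{any} $\xi$ by the universal property of the Verma module, and both the trace and the character are well defined weight space by weight space, the restriction to $\xi$ satisfying \Ref{eqn:shapgeneric} and the closing analytic-continuation step are not actually needed.
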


For $u \in V[0]_{\xi - \rho}$, we denote the function associated to
$\Xi(\xi - \rho)(\bs{1}_{\xi - \rho} \otimes u)$ by
\[
\psi^\xi_u = e^{2 \pi i \xi(\la)}(u + \sum_{j>0} (S^{-1}_{\xi-\rho})_{jk} A_X(F_{j}) \omega(F_k) u_{j}).
\]
Let $P^\xi: V[0]_{\xi -\rho} \arr E(\xi)$ denote the map  $u \mapsto \psi^\xi_u$.

\begin{prop}
\cite{FV2}
For $\xi \in \h^*$ satisfying (\ref{eqn:functiongeneric}) the map $P^\xi$ is an isomorphism
between $V[0]$ and $E(\xi)$.
\end{prop}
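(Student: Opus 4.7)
The plan is to identify $P^\xi$ with the inverse of the ``leading term'' bijection $E(\xi) \to V[0]$ supplied by the Felder--Varchenko existence/uniqueness proposition cited just above. First I would verify that the function-generic assumption (\ref{eqn:functiongeneric}) implies the Shapovalov-generic condition (\ref{eqn:shapgeneric}) at $\mu = \xi - \rho$: specializing $\beta = k\al$ with $\al \in \Dl_+$ and $k \in \N$ (so $\beta \in Q_+$) gives $(\xi - k\al, \xi - k\al) \neq (\xi, \xi)$, equivalently $(\al,\xi) \neq \tfrac{k}{2}(\al,\al)$, which is exactly $\chi^\al_k(\xi - \rho) \neq 0$. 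Hence $S_{\xi - \rho}$ is nondegenerate, $I_{\xi - \rho} = 0$, and therefore $V[0]_{\xi - \rho} = V[0]$, so $P^\xi$ is in fact defined on all of $V[0]$.

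Next I would read off the leading term of
\[
\psi^\xi_u \;=\; e^{2\pi i \xi(\la)}\Bigl(u + \sum_{j > 0,\ k \geq 0}(S^{-1}_{\xi - \rho})_{jk}\, A_X(F_j)\, \omega(F_k)\, u\Bigr).
\]
Each summand $A_X^\sigma(F_{\beta_1}\cdots F_{\beta_m})$ with $m \geq 1$ carries numerator factors $X_{\beta_{\sigma(k)}}^{a^\sigma_k + 1}$ whose exponents are all at least $1$, while every denominator factor $1 - X_{\beta_{\sigma(1)}}\cdots X_{\beta_{\sigma(k)}}$ equals $1$ at $X = 0$. Hence $A_X(F_j)\big|_{X = 0} = 0$ for every $j > 0$, and so the leading term of $\psi^\xi_u$ is precisely $u$.

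Finally I would combine these pieces. By Proposition \ref{prop:eigenfunct}, $P^\xi$ takes values in $E(\xi)$. The Felder--Varchenko proposition states that the assignment $\psi = e^{2\pi i \xi(\la)}\phi \mapsto \phi(0,\dots,0)$ is a bijection $E(\xi) \to V[0]$. By the previous step, composing this leading-term map with $P^\xi$ is the identity on $V[0]$, so $P^\xi$ is injective; and for any $\psi \in E(\xi)$ with leading term $u$, the uniqueness clause of the same proposition forces $\psi = P^\xi(u)$, so $P^\xi$ is surjective. The only slightly delicate point is the vanishing $A_X(F_j)|_{X=0} = 0$ in the middle step, which requires a direct inspection of the combinatorial formula for $A_X^\sigma$ to see that every exponent $a^\sigma_k + 1$ is at least $1$; all the remaining work is a formal consequence of results already established.
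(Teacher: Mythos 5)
Your proof is correct. The paper offers no argument of its own here --- it simply cites \cite{FV2} --- so there is nothing to compare against except consistency with the surrounding text, and your reconstruction fits it exactly: the verification that \Ref{eqn:functiongeneric} specialized to $\beta = k\al$ yields $\chi^\al_k(\xi-\rho)\neq 0$, hence $V[0]_{\xi-\rho}=V[0]$, is the right reduction; the vanishing $A_X(F_j)|_{X=0}=0$ for $j>0$ is exactly the observation the paper itself records after the definition of $A_X$; and injectivity/surjectivity then follow formally from Proposition \ref{prop:eigenfunct} together with the existence--uniqueness statement for eigenfunctions with prescribed leading term. The only cosmetic remark is that you silently correct the paper's typo ($u_j$ in place of $u$ in the displayed formula for $\psi^\xi_u$), which is the right reading.
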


\begin{prop}
\label{prop:KZBpreserves}
For $p=1,\dots,n$, the operator
 $H_p(z)$ preserves $E(\xi)$.
\end{prop}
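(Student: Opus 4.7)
The plan is to establish two facts and combine them: (a) $H_p(z)$ maps $\mc A(\xi) \otimes V[0]$ into itself, and (b) $H_p(z)$ commutes with $H_0$. Since $E(\xi) \subset \mc A(\xi) \otimes V[0]$ is by definition the $\pi i(\xi,\xi)$-eigenspace of $H_0$, these two facts together give the conclusion at once: for $\psi \in E(\xi)$ one has
\be
H_0 H_p(z)\psi \ =\ H_p(z) H_0 \psi \ =\ \pi i (\xi,\xi)\, H_p(z)\psi,
\ee
with the right-hand side still in $\mc A(\xi) \otimes V[0]$, so $H_p(z)\psi \in E(\xi)$.

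For step (a), I apply the explicit formula \Ref{eqn:H_p} to a general element $e^{2\pi i \xi(\la)}\phi$ with $\phi \in \mc A \otimes V[0]$. The derivative piece, by the product rule, gives
\be
e^{2\pi i \xi(\la)}\Bigl[-2\pi i \sum_\nu \xi_\nu\, h_\nu^{(p)}\phi \ -\ \sum_\nu h_\nu^{(p)}\partial_{\la_\nu}\phi\Bigr],
\ee
and the chain rule $\partial_{\la_\nu} X_j = -2\pi i\, \al_j(h_\nu) X_j$ shows that $\partial_{\la_\nu}$ preserves $\mc A$, possibly raising pole orders but only along the permitted locus $\bigcup_{\al \in \Dl} \{X_\al = 1\}$. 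The multiplicative pieces involve the scalars $(Z_p+Z_s)/(Z_p-Z_s)$ (constant in $\la$) and $(1+X_\al)/(1-X_\al)$, both in $\mc A$. Finally, the operators $h_\nu^{(p)}$, $\Omega^{(p,s)}$, and $\Omega_\al^{(p,s)} - \Omega_{-\al}^{(p,s)}$ all preserve the total $\h$-weight, so the $V[0]$ factor survives.

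For step (b), I appeal to the commutativity $[H_0(z,\tau), H_p(z,\tau)] = 0$ for $\tau \in H_+$ from \cite{FW}. The coefficient-by-coefficient limits as $\tau \to i\infty$ computed in the proof of the preceding proposition show that $H_0(z,\tau) \to H_0$ and $H_p(z,\tau) \to H_p(z)$, so the identically vanishing commutator passes to $[H_0, H_p(z)] = 0$. The main obstacle is step (a), specifically the bookkeeping of pole orders in $\mc A$ and of weights in $V[0]$; step (b) is conceptually immediate once the trigonometric limit is set up as in the preceding proposition.
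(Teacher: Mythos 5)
Your proof is correct and follows essentially the same route as the paper: show $H_p(z)$ preserves $\mc A(\xi)\otimes V[0]$ using the formulation \Ref{eqn:H_p}, then use $[H_0,H_p(z)]=0$ to conclude that the $\pi i(\xi,\xi)$-eigenspace of $H_0$ is preserved. You simply spell out the details (the product/chain rule bookkeeping for the derivative term and the trigonometric limit of the elliptic commutativity from \cite{FW}) that the paper leaves implicit.
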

\begin{proof}
We use the formulation of $H_p(z)$
 in formula (\ref{eqn:H_p}).
Differentiations with respect to $\la$ preserve $\mc{A}(\xi)$, so  $H_p(z)$ preserves
$\mc{A}(\xi) \otimes V[0]$. Since $H_0$ and $H_p(z)$ commute,
$H_p(z)$ also preserves $E(\xi)$.
\end{proof}

\subsection{Trigonometric Gaudin and trigonometric KZB operators}
\begin{lem}
\label{lem:trigeigenfunct}
For $\xi$ with property
(\ref{eqn:functiongeneric}) and
$p=1, \dots, n$, we have
\[
H_p(z_1, \dots, z_n) P^\xi =
P^\xi \mc{K}_p(e^{-2\pi i z_1}, \dots, e^{-2\pi i z_n}, \xi)
\]
as maps from $V[0]$ to $E(\xi)$.
\end{lem}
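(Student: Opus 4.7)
The plan is to reduce the identity to a comparison of leading terms in the sense of the Felder--Varchenko construction. Both sides define linear maps $V[0] \to E(\xi)$: on the right, $\mc{K}_p(e^{-2\pi i z_1}, \dots, e^{-2\pi i z_n}, \xi)$ preserves $V[0]_{\xi-\rho} = V[0]$ by Corollary \ref{cor:trigpreserves}, so the composition with $P^\xi$ lies in $E(\xi)$; on the left, $H_p(z) P^\xi u \in E(\xi)$ by Proposition \ref{prop:KZBpreserves}. By the uniqueness clause of the proposition defining $\psi^\xi_u$, each function in $E(\xi)$ is determined by its leading term $\mathrm{const}_{\{X_j\}}(\phi^\xi) \in V[0]$. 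The leading term of the right-hand side is $\mc{K}_p(Z_1, \dots, Z_n, \xi) u$ by construction of $P^\xi$, so it suffices to extract the leading term of $H_p(z) P^\xi u$ and confirm it agrees.

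To do this, I write $P^\xi u = e^{2\pi i \xi(\la)}(u + \phi_1)$ with $\phi_1 \in X \cdot \mc{A} \otimes V[0]$ and use the conjugated operator $\tilde H_p = e^{-2\pi i \xi(\la)} H_p(z) e^{2\pi i \xi(\la)}$, which replaces each $\partial_{\la_\nu}$ in formula \eqref{eqn:H_p} by $\partial_{\la_\nu} + 2\pi i \xi_\nu$. Evaluating $\tilde H_p(u + \phi_1)$ at $X=0$: the $\partial_{\la_\nu}$ part annihilates $u$, while $\partial_{\la_\nu} X^\beta = -2\pi i \beta(h_\nu) X^\beta$ retains the factor $X^\beta$, so $\partial_{\la_\nu}\phi_1|_{X=0}=0$; the dynamical shift $2\pi i \xi_\nu$ then contributes $-2\pi i \xi^{(p)} u$. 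In the remaining part, setting $X=0$ in $\frac{Z_p+Z_s}{Z_p-Z_s}$ and in $\frac{1+X_\al}{1-X_\al}\mapsto 1$, and using $\phi|_{X=0}=u$, one obtains
\[
-\pi i \sum_{s \neq p}\left[\tfrac{Z_p+Z_s}{Z_p-Z_s}\Omega^{(p,s)} + \sum_{\al \in \Dl_+}(\Omega_\al^{(p,s)} - \Omega_{-\al}^{(p,s)})\right] u,
\]
which lies in $V[0]$ because each operator in the bracket preserves total weight.

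The final step is the algebraic collapse. Using $\Omega = \Omega_+ + \Omega_-$ and $\Omega_+ - \Omega_- = \sum_{\al \in \Dl_+}(\Omega_\al - \Omega_{-\al})$, the bracket becomes
\[
\tfrac{Z_p+Z_s}{Z_p-Z_s}(\Omega_+ + \Omega_-) + (\Omega_+ - \Omega_-) = \tfrac{2(Z_p\Omega_+ + Z_s\Omega_-)}{Z_p - Z_s} = 2\,r(Z_p/Z_s).
\]
Combining with the dynamical contribution, the leading term of $H_p(z)P^\xi u$ is a scalar multiple of $\big(\xi^{(p)} + \sum_{s\neq p} r^{(p,s)}(Z_p/Z_s)\big) u = \mc{K}_p(Z_1, \dots, Z_n, \xi) u$, matching the right-hand side and completing the argument. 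The only real subtlety is confirming that the tail $\phi_1$ makes no contribution at $X=0$: every summand of $\tilde H_p \phi_1$ either evaluates a coefficient in $\mc{A}$ against $\phi_1|_{X=0}=0$, or differentiates $\phi_1$ while retaining the factor $X^\beta$, so all such contributions vanish.
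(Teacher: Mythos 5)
Your proof is correct and follows essentially the same route as the paper's: both place $H_p(z)\psi^\xi_u$ in $E(\xi)$ via Proposition \ref{prop:KZBpreserves}, identify the result by its leading term at $X=0$ (using the uniqueness of $\psi^\xi_v$ for $\xi$ satisfying (\ref{eqn:functiongeneric})), and carry out the identical $\Omega_\pm$ and $r$-matrix collapse. (The overall factor $-2\pi i$ that you describe as ``a scalar multiple'' is also present in the paper's own computation, which ends with $v=-2\pi i\,\mc{K}_p(Z_1,\dots,Z_n,\xi)u$ even though the lemma's statement omits this normalization.)
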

\begin{proof}
For any $u \in V[0]$, we have $H_p(z) P^\xi(u) = H_p(z) \psi^\xi_u \in E(\xi)$ by Proposition
\ref{prop:KZBpreserves}. Since $\xi$ satisfies (\ref{eqn:functiongeneric}),
$H_p(z) \psi^\xi_u$ equals $\psi^\xi_v(\la)$ for some $v \in V[0]$.
By formula (\ref{eqn:H_p})
we have
\bea
v & = & -2\pi i \xi^{(p)}u - \pi i  \sum_{s\neq p}
\left[\frac{Z_p+Z_s}{Z_p - Z_s} \Omega^{(p,s)} + \sum_{\al \in \Dl_+}
\left(\Omega_\al^{(p,s)} - \Omega_{-\al}^{(p,s)} \right) \right]u \\
& = & -2\pi i \xi^{(p)}u -\pi i \sum_{s\neq p}\left[\frac{Z_p + Z_s}{Z_p - Z_s}(\Omega_+^{(p,s)} + \Omega_-^{(p,s)})
+ (\Omega_+^{(p,s)} - \Omega_-^{(p,s)}) \right]u \\
& = & -2\pi i \xi^{(p)}u -2\pi i \sum_{s\neq p} r^{(p,s)} \left( \frac{Z_p}{Z_s} \right)u \\
& = & -2\pi i \mathcal{K}_p(Z_1,\dots,Z_n,\xi) u .
\eea
\end{proof}

The lemma holds for generic $\xi$ and so implies the following result.

\begin{cor}
For $\xi \in \h^*$ and $u \in V[0]_{\xi - \rho}$, we have
\[
H_p(z_1, \dots, z_n) \psi^\xi_u = \psi^\xi_v
\]
for $v = \mc{K}_p(e^{-2\pi i z_1}, \dots, e^{-2\pi i z_n}, \xi)u$.
\end{cor}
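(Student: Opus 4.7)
The plan is to extend the identity in Lemma \ref{lem:trigeigenfunct}, which holds on the dense open subset of $\h^*$ defined by (\ref{eqn:functiongeneric}), to the specific $\xi$ of the corollary via meromorphic continuation. For fixed $u$, $z$, and $p$, I view both $H_p(z)\psi^\xi_u$ and $\psi^\xi_{\mc{K}_p(Z,\xi) u}$ as meromorphic families in $\xi \in \h^*$: the $\xi$-dependence enters through the polynomial $\mc{K}_p(Z, \xi) = \xi^{(p)} + \text{const}$ and through the inverse Shapovalov entries $(S^{-1}_{\xi - \rho})_{jk}$ appearing in the expansion of $\psi^\xi_{\,\cdot\,}$. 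Lemma \ref{lem:trigeigenfunct} gives equality of the two families on the generic locus.

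I then verify both sides are regular at the given $\xi$. For the left side, regularity of $\psi^\xi_u$ follows from Proposition \ref{prop:singif} applied to the explicit formula for $\psi^\xi_u$, using the hypothesis $u \in V[0]_{\xi - \rho}$; applying the $\xi$-independent differential operator $H_p(z)$ preserves regularity. For the right side, Corollary \ref{cor:trigpreserves} gives $v = \mc{K}_p(Z, \xi) u \in V[0]_{\xi - \rho}$, so $\psi^\xi_v$ is well-defined via the regular extension permitted by Lemma \ref{lem:inverseshap}. Continuity in $\xi$ then yields the identity at the specified $\xi$.

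The main subtlety is justifying that the regular value of the meromorphic function $\psi^{\xi'}_{\mc{K}_p(Z, \xi') u}$ at $\xi' = \xi$ really coincides with $\psi^\xi_v$, and not with some other limit that differs by a residue contribution. Writing the difference as $\psi^{\xi'}_{(\xi'-\xi)^{(p)} u}$, one sees that the vanishing factor $(\xi'-\xi)^{(p)} u = O(\xi'-\xi)$ must offset the simple poles of $(S^{-1}_{\xi'-\rho})_{jk}$. By Lemma \ref{lem:inverseshap}, only indices with $F_k \bs{1}_{\xi-\rho} \in \ker(S_{\xi - \rho})$, equivalently $\omega(F_k) \in I_{\xi - \rho}$, contribute polar terms, and examining $\omega(F_k)(\xi' - \xi)^{(p)} u$ via the identity $\omega(F_k) h^{(p)} = h^{(p)} \omega(F_k) + [\omega(F_k), h^{(p)}]$ together with $\omega(F_k) u = 0$ delivers the required cancellation.
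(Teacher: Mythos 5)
Your overall strategy---prove the identity for generic $\xi$ via Lemma \ref{lem:trigeigenfunct} and then pass to arbitrary $\xi$ by meromorphic continuation---is exactly the route the paper takes; the paper's entire justification is the sentence that the lemma ``holds for generic $\xi$ and so implies'' the corollary, so you have supplied considerably more detail than the source. Your regularity checks are correct (Proposition \ref{prop:singif} for the left side, Corollary \ref{cor:trigpreserves} to place $v=\mc{K}_p(Z,\xi)u$ in $V[0]_{\xi-\rho}$ so that $\psi^\xi_v$ is defined), and you have correctly isolated the one genuinely delicate point: whether the limit of $\psi^{\xi'}_{\mc{K}_p(Z,\xi')u}$ as $\xi'\to\xi$ is $\psi^{\xi}_{v}$ on the nose, or $\psi^\xi_v$ plus a residue contribution.

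Your resolution of that last point, however, does not close. Writing $\xi'=\xi+\eps\eta$, the problematic piece is $\sum_{j,k\le N}(S^{-1}_{\xi'-\rho})_{jk}\,A_X(F_j)\,\omega(F_k)\bigl(\eps\,\eta^{(p)}u\bigr)$, with $N$ as in Lemma \ref{lem:inverseshap}. Your commutator identity together with $\omega(F_k)u=0$ gives $\omega(F_k)\bigl(\eps\,\eta^{(p)}u\bigr)=\eps\,[\omega(F_k),\eta^{(p)}]u$, a first-order zero; but the entries $(S^{-1}_{\xi'-\rho})_{jk}$ with $j,k\le N$ have simple poles, so the product is only $O(1)$, not $o(1)$. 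A simple zero against a simple pole shows the family stays bounded---it does not show the extra term vanishes in the limit, and $[\omega(F_k),\eta^{(p)}]u$ is in general nonzero: it is a sum of coproduct components of $\omega(F_k)$ rescaled by weights of the $p$th tensor slot, and these operators need not lie in $I_{\xi-\rho}$. To finish one needs an additional input, for instance that the residue matrix of $S^{-1}_{\xi'-\rho}$ annihilates the vector with components $[\omega(F_k),\eta^{(p)}]u$, or an argument identifying the limit through its constant term---bearing in mind that ``zero constant term in $E(\xi)$ implies zero'' is exactly what can fail at the non-generic $\xi$ under consideration. As written, the final cancellation is asserted rather than proved, which is the one genuine gap in an otherwise faithful and more careful rendering of the paper's argument.
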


\section{Scalar products}
\subsection{Shapovalov form and rational Gaudin operators}
We recall the following fact, see for example \cite{RV}.
\begin{prop}
\label{prop:ratsym}
\
Let $V = V_1 \otimes \dots \otimes V_n$, and let $S$ be the tensor Shapovalov form
on V. Let $u,v \in V$.
Then
\[
S(K_p(z_1,\dots,z_n) u, v) = S(u, K_p(z_1,\dots,z_n) v),
\qquad
 p=1,\dots n.
\]
\end{prop}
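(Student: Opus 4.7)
The plan is to prove symmetry of each rational Gaudin operator $K_p(z)$ with respect to the tensor Shapovalov form $S$ by showing that every elementary summand $\Omega^{(p,s)}$ with $p \neq s$ is symmetric, and then invoking linearity in the scalar coefficients $1/(z_p - z_s)$.

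The central tool is the contravariance of the Shapovalov form. Directly from $S(g_1, g_2) = \gamma(a\omega(g_1) g_2)$, together with the fact that $\omega$ is an automorphism and $a$ an anti-automorphism of $U(\g)$, one obtains $S(gu, v) = S(u, a\omega(g) v)$ for any $g \in U(\g)$, first on $M_\mu$ and then on each highest weight quotient $V_p$. Writing $\sigma = a\omega$, one checks on Chevalley generators that $\sigma(e_j) = f_j$, $\sigma(f_j) = e_j$, and $\sigma(h_j) = h_j$, so $\sigma$ is the Cartan anti-involution. In particular, $\sigma$ preserves the invariant form $(\,,\,)$ on $\g$. Consequently, if $\{x_i\}, \{y_i\}$ are bases of $\g$ dual with respect to $(\,,\,)$, so that $\Omega = \sum_i x_i \otimes y_i$, then $\{\sigma(x_i)\}, \{\sigma(y_i)\}$ are again dual, giving the key identity $(\sigma \otimes \sigma)(\Omega) = \Omega$.

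Since the tensor Shapovalov form on $V = V_1 \otimes \cdots \otimes V_n$ is defined factorwise, contravariance extends to single-factor operators: $S(g^{(p)} u, v) = S(u, \sigma(g)^{(p)} v)$ for any $g \in U(\g)$ and any $p$. Actions on distinct factors commute, so for $p \neq s$ we compute
\begin{equation*}
S(\Omega^{(p,s)} u, v) = \sum_i S(x_i^{(p)} y_i^{(s)} u, v) = \sum_i S(u, \sigma(x_i)^{(p)} \sigma(y_i)^{(s)} v) = S(u, \Omega^{(p,s)} v),
\end{equation*}
where the last equality uses $(\sigma \otimes \sigma)(\Omega) = \Omega$. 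Summing over $s \neq p$ with the coefficients $1/(z_p - z_s)$ gives the desired symmetry of $K_p(z)$. The only step requiring real verification is the identity $(\sigma \otimes \sigma)(\Omega) = \Omega$, which reduces to the elementary fact that the invariant form $(\,,\,)$ is preserved by $\sigma$; everything else is formal manipulation of the definitions.
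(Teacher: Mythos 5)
Your proof is correct and complete. The paper itself gives no argument for this proposition — it is recalled as a known fact with a citation to [RV] — and what you have written is precisely the standard proof: contravariance of the Shapovalov form with respect to the Cartan anti-involution $\sigma = a\omega$, together with the invariance $(\sigma\otimes\sigma)(\Omega)=\Omega$, which holds because $\sigma|_{\g}=-\omega$ is an isometry of the invariant form (here one quietly uses that $\g$ is simple, so the form is a multiple of the Killing form and is preserved by the automorphism $\omega$). One can also see $(\sigma\otimes\sigma)(\Omega)=\Omega$ directly from the paper's decomposition $\Omega=\Omega_0+\sum_{\al}\Omega_\al$, since $\sigma$ fixes $\h$ pointwise and swaps $\Omega_\al$ with $\Omega_{-\al}$.
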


\subsection{Scalar product and trigonometric Gaudin operators}
We define the following family of bilinear forms
on $V$, depending on a parameter $\xi \in \h^*$.

\begin{defn}
\label{def:trigprod}
For $\xi, \nu \in \h^*$ introduce a bilinear form $\langle , \rangle_\xi$ on
$ V[\nu]_{\xi - \rho - \frac{1}{2}\nu}$ by the formula
\begin{equation}
\label{eqn:trigprod}
\left\langle u,v \right\rangle_\xi = S(u, \mathcal{Q}(\xi) v).
\end{equation}
\end{defn}

\begin{prop}
\label{prop:rattrigproducts}
The bilinear form $\langle \, , \, \rangle_\xi$
on $V[\nu]_{\xi -\rho - \frac{1}{2}\nu}$ and the Shapovalov form on \\
$M_{\xi - \rho - \frac{1}{2}\nu} \otimes V$ satisfy the relation
\[
\langle u, v \rangle_\xi = S\left(\Xi(\xi - \rho - \frac{1}{2}\nu)
(\bs{1}_{\xi - \rho - \frac{1}{2}\nu} \otimes u), \Xi(\xi - \rho - \frac{1}{2}\nu)
(\bs{1}_{\xi - \rho - \frac{1}{2}\nu} \otimes v)\right).
\]
\end{prop}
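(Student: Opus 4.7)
The plan is to unpack both sides using the definitions of $\Xi$ and $\mathcal{Q}$, collapse an index pair using the matrix identity $S_\mu S_\mu^{-1} = I$, and then reconcile the two expressions via the contravariance and symmetry of the Shapovalov form.

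Write $\mu = \xi - \rho - \tfrac{1}{2}\nu$. The first step is to expand the right-hand side by substituting the definition of $\Xi(\mu)$ and using the fact that the Shapovalov form on $M_\mu \otimes V$ factors as a product of the forms on $M_\mu$ and on $V$. The matrix element $S(F_j \bs{1}_\mu, F_{j'} \bs{1}_\mu)$ equals $(S_\mu)_{jj'}$, so the identity $\sum_{j'} (S_\mu)_{jj'} (S_\mu^{-1})_{j'k'} = \delta_{jk'}$ collapses one pair of indices, yielding
\[
S\bigl(\Xi(\mu)(\bs{1}_\mu \otimes u),\, \Xi(\mu)(\bs{1}_\mu \otimes v)\bigr) = \sum_{j,k} (S_\mu^{-1})_{jk}\, S(\omega(F_k) u,\, \omega(F_j) v).
\]

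Next I would expand the left-hand side directly from the definition of $\mathcal{Q}(\xi)$, obtaining $\sum_{j,k} (S_\mu^{-1})_{jk}\, S(u,\, a(F_j)\omega(F_k) v)$. To bring this into comparable shape I would use contravariance of the Shapovalov form with respect to the coproduct action of $U(\g)$, $S(u, g \cdot v) = S(a\omega(g) \cdot u, v)$. This extends from the single-factor case to the tensor product because the Shapovalov form on $V$ is the product of the factor forms and the coproduct is an algebra homomorphism. Combined with the easily verified identities $a\omega \circ a = \omega$ and $a\omega \circ \omega = a$ on $U(\g)$, contravariance converts $S(u, a(F_j)\omega(F_k) v)$ into $S(\omega(F_j) u,\, \omega(F_k) v)$, so
\[
\langle u, v \rangle_\xi = \sum_{j,k} (S_\mu^{-1})_{jk}\, S(\omega(F_j) u,\, \omega(F_k) v).
\]
The two expressions now coincide after relabeling $j \leftrightarrow k$ and invoking the symmetry of $S$ together with $(S_\mu^{-1})_{jk} = (S_\mu^{-1})_{kj}$.

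The main technical obstacle is the case in which $\mu$ fails to satisfy \Ref{eqn:shapgeneric} and $(S_\mu^{-1})_{jk}$ is singular, so the matrix identity $\sum_{j'} (S_\mu)_{jj'} (S_\mu^{-1})_{j'k'} = \delta_{jk'}$ does not literally hold. I would carry out the calculation above first for generic $\mu$, where $S_\mu$ is invertible and all manipulations are routine. For arbitrary $\mu$ with $u, v \in V[\nu]_\mu$, Proposition \ref{prop:singif} and Lemma \ref{lem:inverseshap} ensure that $\Xi(\mu)(\bs{1}_\mu \otimes u)$, $\Xi(\mu)(\bs{1}_\mu \otimes v)$, and $\mathcal{Q}(\xi)v$ are all well-defined and depend regularly on $\xi$. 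Hence both sides of the asserted identity are regular in $\xi$ on $V[\nu]_\mu$, and the identity — established on a Zariski-dense subset — extends by continuity.
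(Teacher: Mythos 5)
Your proof is correct and follows essentially the same route as the paper's: expand $\Xi(\mu)$, use the product structure of the Shapovalov form together with $\sum_j (S_\mu)_{lj}(S_\mu^{-1})_{jk}=\delta_{lk}$ to collapse indices, and finish with the $a\omega$-contravariance and symmetry of $S$. Your explicit limiting argument for non-generic $\mu$ is an addition the paper leaves implicit, but it is justified by Proposition \ref{prop:singif} and Lemma \ref{lem:inverseshap} and does not change the approach.
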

\begin{proof}
Set $\mu = \xi - \rho - \frac{1}{2}\nu$.
Then
\[
\Xi(\mu)(\bs{1}_\mu \otimes u) = \bs{1}_\mu \otimes u
+ \sum_{j,k>0} (S_\mu^{-1})_{jk} \left(F_j \otimes \omega(F_k)\right)
(\bs{1}_\mu \otimes u)
\]
with respect to a homogeneous basis ${F_j}$ of $U(\n_-)$. Thus the right side of the
equation (\ref{eqn:trigprod}) is
\[
\sum_{j,k,l,m \geq 0} (S_\mu^{-1})_{jk} (S_\mu^{-1})_{lm}
S\left(F_j \bs{1}_\mu \otimes \omega(F_k)u, F_l \bs{1}_\mu \otimes \omega(F_m) v \right),
\]
or
\[
\sum_{j,k,l,m \geq 0} (S_\mu^{-1})_{jk} (S_\mu^{-1})_{lm} (S_\mu)_{jl} S(\omega(F_k)u,\omega(F_m) v)
= \sum_{k,m \geq 0} (S_\mu^{-1})_{km} S( u, a(F_k) \omega(F_m) v).
\]
This last expression is just $S(u, \mathcal{Q}(\xi)w)$.
\end{proof}

\begin{cor}
\label{cor:singtrigprod}
Let $\bs{1}_\mu \otimes u + \sum_{j>0} F_j \otimes u_j$ and
$\bs{1}_\mu \otimes v + \sum_{j>0} F_j \otimes v_j$ be two vectors in
$\sing M_\mu \otimes V[\mu + \nu]$. Then the relation
\[
S\left(\bs{1}_\mu \otimes u + \sum_{j>0} F_j \otimes u_j,
\bs{1}_\mu \otimes v + \sum_{j>0} F_j \otimes v_j \right)
= \langle u, v \rangle_{\mu + \rho + \frac{1}{2}\nu}
\]
holds.
\end{cor}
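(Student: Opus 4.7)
The plan is to reduce everything to Proposition \ref{prop:rattrigproducts} via the canonical decomposition of singular vectors given by Proposition \ref{prop:singdifference}, and then exploit the defining property of the Shapovalov radical.

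First, by Proposition \ref{prop:singonlyif} applied to each of the two singular vectors, both leading terms $u, v$ lie in $V[\nu]_\mu$. Proposition \ref{prop:singif} then ensures that $\Xi(\mu)(\bs{1}_\mu \otimes u)$ and $\Xi(\mu)(\bs{1}_\mu \otimes v)$ are well-defined elements of $M_\mu \otimes V[\mu+\nu]$. Next, Proposition \ref{prop:singdifference} lets us write
\[
\bs{1}_\mu \otimes u + \sum_{j>0} F_j \otimes u_j \ =\ \Xi(\mu)(\bs{1}_\mu \otimes u) + k_u,
\]
\[
\bs{1}_\mu \otimes v + \sum_{j>0} F_j \otimes v_j \ =\ \Xi(\mu)(\bs{1}_\mu \otimes v) + k_v,
\]
with $k_u, k_v \in \on{Ker}(S_\mu) \otimes V$.

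The next key point is that the tensor Shapovalov form on $M_\mu \otimes V$ annihilates $\on{Ker}(S_\mu) \otimes V$ on either side: if $k = \sum_i a_i \otimes b_i$ with each $a_i \in \on{Ker}(S_\mu)$, then $S(k, c \otimes d) = \sum_i S(a_i, c) \cdot S(b_i, d) = 0$ since $\on{Ker}(S_\mu)$ is the radical of $S$ on $M_\mu$. Therefore
\[
S\bigl(\Xi(\mu)(\bs{1}_\mu \otimes u) + k_u,\ \Xi(\mu)(\bs{1}_\mu \otimes v) + k_v\bigr) \ =\ S\bigl(\Xi(\mu)(\bs{1}_\mu \otimes u),\ \Xi(\mu)(\bs{1}_\mu \otimes v)\bigr).
\]

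Finally, setting $\xi = \mu + \rho + \frac{1}{2}\nu$ so that $\mu = \xi - \rho - \frac{1}{2}\nu$, Proposition \ref{prop:rattrigproducts} identifies the right-hand side with $\langle u, v \rangle_{\mu + \rho + \frac{1}{2}\nu}$, completing the proof.

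There is no serious obstacle: the corollary is essentially an assembly of preceding results. The only point requiring mild care is verifying that the Shapovalov radical is orthogonal to everything on both factors of the tensor product, which is immediate from the product structure of the tensor Shapovalov form and the characterization of $\on{Ker}(S_\mu)$ as the maximal proper submodule of $M_\mu$.
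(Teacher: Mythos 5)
Your proposal is correct and follows exactly the paper's route: the paper's own (one-line) proof invokes Proposition \ref{prop:singdifference} to say the singular vectors differ from $\Xi(\mu)(\bs{1}_\mu\otimes u)$ and $\Xi(\mu)(\bs{1}_\mu\otimes v)$ by elements of ${\rm Ker}(S_\mu)\otimes V$, which the tensor Shapovalov form annihilates, and then applies Proposition \ref{prop:rattrigproducts}. You have simply made explicit the supporting steps (membership of $u,v$ in $V[\nu]_\mu$, regularity of $\Xi$, and the vanishing of the radical pairings) that the paper leaves implicit.
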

\begin{proof}
The singular vectors differ from $\Xi(\mu)(\bs{1}_\mu \otimes u)$ and
$\Xi(\mu)(\bs{1}_\mu \otimes v)$ by vectors in ${\rm Ker}(S_\mu)\otimes V$.
\end{proof}

\begin{cor}
The bilinear form $\langle \, , \, \rangle_\xi$
on $V[\nu]_{\xi -\rho - \frac{1}{2}\nu}$ is symmetric.
\end{cor}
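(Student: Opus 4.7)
The plan is to deduce symmetry immediately from Proposition \ref{prop:rattrigproducts}, using the fact that the Shapovalov form on a tensor product is symmetric.

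First, I would recall that the Shapovalov form on any Verma module $M_\mu$ is symmetric: this follows from its definition $S(g_1, g_2) = \gamma(a\omega(g_1) g_2)$ together with the standard check (or directly from the fact that the induced map $S_\mu : M_\mu \to M_\mu^*$ identifies $g \bs{1}_\mu$ with $\omega(g) \bs{1}_\mu^*$, so $S_\mu(g_1 \bs{1}_\mu, g_2 \bs{1}_\mu) = (\omega(g_1) \bs{1}_\mu^*)(g_2 \bs{1}_\mu)$, which is manifestly symmetric in $g_1, g_2$). Consequently, the tensor Shapovalov form on $M_\mu \otimes V = M_\mu \otimes V_1 \otimes \cdots \otimes V_n$, being the product of symmetric forms, is itself symmetric.

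Next, set $\mu = \xi - \rho - \tfrac{1}{2}\nu$ and apply Proposition \ref{prop:rattrigproducts}, which gives
\[
\langle u, v \rangle_\xi = S\bigl(\Xi(\mu)(\bs{1}_\mu \otimes u),\, \Xi(\mu)(\bs{1}_\mu \otimes v)\bigr).
\]
Swapping $u$ and $v$ and invoking the symmetry of $S$ immediately yields $\langle u, v \rangle_\xi = \langle v, u \rangle_\xi$.

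There is essentially no obstacle here; the only thing to verify carefully is that both $\Xi(\mu)(\bs{1}_\mu \otimes u)$ and $\Xi(\mu)(\bs{1}_\mu \otimes v)$ are well-defined, which is guaranteed by Proposition \ref{prop:singif} since $u, v \in V[\nu]_\mu$. The corollary therefore amounts to a one-line consequence of the previous proposition plus the well-known symmetry of the Shapovalov form.
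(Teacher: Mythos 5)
Your proposal is correct and is exactly the argument the paper intends: the corollary is stated immediately after Proposition \ref{prop:rattrigproducts} precisely so that symmetry follows from the identity $\langle u,v\rangle_\xi = S(\Xi(\mu)(\bs{1}_\mu\otimes u),\Xi(\mu)(\bs{1}_\mu\otimes v))$ together with the symmetry of the tensor Shapovalov form. Your added checks (symmetry of $S$, well-definedness of $\Xi(\mu)(\bs{1}_\mu\otimes u)$ via Proposition \ref{prop:singif}) are consistent with what the paper already asserts.
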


\begin{thm}
\label{thm:trigsym}
For $\xi \in \h^*$, $u,v \in V[\nu]_{\xi - \rho - \frac{1}{2}\nu}$
and $p= 1,\dots,n$, we have
\[
\langle \mathcal{K}_p(z,\xi) u, v \rangle_\xi = \langle u, \mathcal{K}_p(z,\xi)v \rangle_\xi.
\]
\end{thm}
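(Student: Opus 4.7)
The plan is to reduce the symmetry of the trigonometric Gaudin operator with respect to $\langle\,,\,\rangle_\xi$ to the already-established symmetry of the rational Gaudin operators with respect to the Shapovalov form on the extended tensor product $M_\mu \otimes V$, where $\mu = \xi - \rho - \tfrac{1}{2}\nu$.

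First, note that Corollary \ref{cor:trigpreserves} guarantees that $\mc{K}_p(z,\xi)u$ and $\mc{K}_p(z,\xi)v$ again lie in $V[\nu]_\mu$, so the bilinear form $\langle\mc{K}_p(z,\xi)u,v\rangle_\xi$ and $\langle u,\mc{K}_p(z,\xi)v\rangle_\xi$ are well-defined. By Proposition \ref{prop:rattrigproducts},
\[
\langle \mc{K}_p(z,\xi)u,v\rangle_\xi = S\bigl(\Xi(\mu)(\bs{1}_\mu\otimes \mc{K}_p(z,\xi)u),\;\Xi(\mu)(\bs{1}_\mu\otimes v)\bigr),
\]
and similarly on the other side with the roles of $u$ and $v$ swapped.

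Next I would apply Proposition \ref{prop:rattrig} to rewrite $\Xi(\mu)(\bs{1}_\mu \otimes \mc{K}_p(z,\xi)u)$ as $\bigl(z_p K_p(0,z_1,\dots,z_n) + \tfrac{1}{2}(\La_p,\La_p+2\rho)\bigr)\Xi(\mu)(\bs{1}_\mu\otimes u)$, viewing $K_p$ as the rational Gaudin operator on the tensor product $M_\mu\otimes V_1\otimes\cdots\otimes V_n$ with the extra Verma factor placed at position $0$. Here the scalar $\tfrac{1}{2}(\La_p,\La_p+2\rho)$ is harmless, and the operator $K_p(0,z_1,\dots,z_n)$ is symmetric on $M_\mu\otimes V$ with respect to the tensor Shapovalov form by Proposition \ref{prop:ratsym} (applied to the $(n+1)$-fold tensor product that includes $M_\mu$).

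Thus the chain of equalities
\begin{align*}
\langle \mc{K}_p(z,\xi)u,v\rangle_\xi
&= S\Bigl(\bigl(z_pK_p + \tfrac{1}{2}(\La_p,\La_p+2\rho)\bigr)\Xi(\mu)(\bs{1}_\mu\otimes u),\;\Xi(\mu)(\bs{1}_\mu\otimes v)\Bigr)\\
&= S\Bigl(\Xi(\mu)(\bs{1}_\mu\otimes u),\;\bigl(z_pK_p + \tfrac{1}{2}(\La_p,\La_p+2\rho)\bigr)\Xi(\mu)(\bs{1}_\mu\otimes v)\Bigr)\\
&= \langle u, \mc{K}_p(z,\xi)v\rangle_\xi
\end{align*}
closes the argument, where the middle equality uses the symmetry of $K_p$ on $M_\mu\otimes V$ and the outer equalities combine Propositions \ref{prop:rattrig} and \ref{prop:rattrigproducts}. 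There is no serious obstacle: the only subtle point is making sure that Proposition \ref{prop:ratsym} is being invoked for the augmented $(n+1)$-factor tensor product (Verma module at position $0$ together with $V_1,\dots,V_n$), and that the Shapovalov form used there is the tensor product of the Shapovalov forms on all factors, which is precisely the form appearing in Proposition \ref{prop:rattrigproducts}.
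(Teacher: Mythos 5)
Your proposal is correct and follows essentially the same route as the paper: reduce via Proposition \ref{prop:rattrigproducts} to the Shapovalov form on $M_\mu\otimes V$, use Proposition \ref{prop:rattrig} to replace $\mc{K}_p(z,\xi)$ by $z_pK_p(0,z)+\tfrac12(\La_p,\La_p+2\rho)$, and invoke the symmetry of the rational Gaudin operators (Proposition \ref{prop:ratsym}) on the augmented tensor product. Your added remark about checking well-definedness via Corollary \ref{cor:trigpreserves} matches the paper's opening observation as well.
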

\begin{proof}
Note that both sides are defined by Corollary \ref{cor:trigpreserves}.
By Proposition \ref{prop:ratsym}, we know that $z_p K_p(0,z_1,\dots,z_n) +
\frac{1}{2}(\La_p, \La_p + 2\rho)$ is symmetric with respect to the Shapovalov
form defined on $M_{\xi - \rho - \frac{1}{2}\nu} \otimes V$. This and
Proposition \ref{prop:rattrig} imply that
\[
S(\Xi(\mu)(\bs{1}_\mu \otimes \mc{K}_p(z, \xi) u), \Xi(\mu)(\bs{1}_\mu \otimes v))
= S(\Xi(\mu)(\bs{1}_\mu \otimes u), \Xi(\mu)(\bs{1}_\mu \otimes \mc{K}_p(z,\xi) v))
\]
for $\mu = \xi - \rho - \frac{1}{2}\nu$. Then by Proposition \ref{prop:rattrigproducts}
we have the theorem.
\end{proof}

\subsection{Scalar product and KZB operators}

\begin{defn}
For $\xi \in \h^*$, we define the bilinear form $\langle \, , \, \rangle$
on $\oplus_{\beta \in Q} \mc{A}(\xi + \beta) \otimes V[0]$ by
\begin{equation}
\label{eqn:integraldef}
\langle \psi_1(\la), \psi_2(\la) \rangle =
\int_{C} S(\psi_1(\la), \psi_2(-\la)) d\la_1 d\la_2 \cdots d\la_r
\end{equation}
where $C$ is given by each $\la_j = (\la, \al_j)$ ranging along the interval
from $0 - i \delta$
to $1 - i \delta$ for some $\delta > 0$.
\end{defn}

\begin{prop}
The bilinear form $\langle \, , \, \rangle$ is well-defined on
$\oplus_{\beta \in Q} \mc{A}(\xi + \beta) \otimes V[0]$.
\end{prop}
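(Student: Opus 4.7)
The plan is to verify two things: that the integrand is regular on the compact contour $C$ for each $\delta > 0$ so that the integral converges, and that the result does not depend on the choice of $\delta$. The strategy uses that the integrand is a meromorphic function on $\h$ with poles contained in the root hyperplanes $\{X_\al = 1\}$, and that it is periodic under the coweight lattice, so the integration cube behaves like a cycle.

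Writing $\psi_i(\la) = e^{2\pi i(\xi + \beta_i)(\la)} \phi_i(\la)$ with $\phi_i \in \mc{A}\otimes V[0]$ and $\beta_i \in Q$, the $\xi$-exponentials factor out of $S$ and combine into
\[
S(\psi_1(\la), \psi_2(-\la)) = e^{2\pi i(\beta_1 - \beta_2)(\la)}\,S(\phi_1(\la), \phi_2(-\la)).
\]
The prefactor is a Laurent monomial in $X_1, \dots, X_r$ since $\beta_1 - \beta_2 \in Q$, while $\phi_1(\la)$ and $\phi_2(-\la)$ are meromorphic with poles in the same set $\bigcup_{\al\in\Dl}\{X_\al = 1\}$, because the substitution $\la \mapsto -\la$ sends $X_\al \mapsto X_{-\al}$, permuting this locus. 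On $C$, each coordinate $\la_j = (\la, \al_j)$ satisfies $\mathrm{Im}(\la_j) = -\delta$, so $|X_j| = e^{-2\pi\delta}$; hence for any root $\al = \sum_j m_j \al_j$, $|X_\al| = e^{-2\pi\delta\sum_j m_j} \neq 1$, since $\al$ has nonzero height. Thus $C$ avoids the polar set, so the integrand is continuous on the compact contour and the integral converges.

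For independence of $\delta$, pick $0 < \delta_1 < \delta_2$ and consider the family $\{C_t\}_{t\in[\delta_1,\delta_2]}$. Each $C_t$ avoids the polar locus by the previous paragraph, so the sweep is an $(r+1)$-chain in the pole-free region. Periodicity of the integrand with period $1$ in each $\la_j$ causes the contributions from opposite faces of each $C_t$ (where $\la_j = -it$ and $\la_j = 1 - it$) to cancel pairwise, so only $C_{\delta_1}$ and $C_{\delta_2}$ remain in the boundary of the swept chain. Since the integrand times $d\la_1 \wedge \cdots \wedge d\la_r$ is a closed holomorphic top form on the pole complement, Stokes' theorem delivers equality of the integrals over $C_{\delta_1}$ and $C_{\delta_2}$. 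The main subtlety is precisely this multivariable Cauchy-type bookkeeping---pairing boundary faces via periodicity and verifying the deformation remains pole-free---which becomes transparent upon changing variables to $X_j = e^{-2\pi i \la_j}$ and viewing each $C_t$ as the real torus $\{|X_j| = e^{-2\pi t}\}$ in $(\C^*)^r$.
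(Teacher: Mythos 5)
Your proof is correct and follows essentially the same route as the paper: factor out the $e^{\pm 2\pi i\xi(\la)}$ exponentials so the integrand becomes a Laurent monomial $X_1^{b_1}\cdots X_r^{b_r}$ times meromorphic functions of $X$, observe periodicity, and check that the contour (equivalently, the torus $|X_j|=e^{-2\pi\delta}$) misses the polar locus $\bigcup_{\al\in\Dl}\{X_\al=1\}$ because $|X_\al|\neq 1$ there. You additionally make explicit the independence of $\delta$ via a Stokes/homotopy argument, which the paper leaves implicit in its change of variables to the torus $\tilde C$ with $\epsilon<1$; this is a welcome extra but not a different method.
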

\begin{proof}
Recall that $\mc{A}(\mu)$ is defined as
the space of functions on $\h$
of the form $\psi(\la) = e^{2 \pi i \mu(\la)} \phi(X)$ where
$X = (X_1, \dots, X_r)$, $X_j = e^{-2\pi i \al_j(\la)}$, and
$\phi$ is a meromorphic function with poles only
on the hyperplanes $\cup_{\al \in \Dl} \{X_\al = 1\}$.
For $\psi_1(\la) \in \mc{A}(\xi) \otimes V[0]$
and $\psi_2(\la) \in \mc{A}(\xi +\beta) \otimes V[0]$
with $\beta = \sum_{j=1}^r b_j \al_j \in Q$
the bilinear form is
\[
\langle \psi_1(\la), \psi_2(\la) \rangle =
\int_{C} S(e^{2\pi i \xi(\la)} \phi_1(X), e^{-2\pi i \xi(\la)}e^{-2\pi i \beta(\la)}
\phi_2(X^{-1}))  d\la_1 d\la_2 \cdots d\la_r
\]
where $X^{-1}$ denotes $(X_1^{-1}, \dots, X_r^{-1})$.
The factor $e^{-2\pi i \beta(\la)}$ is $X_1^{b_1} \cdots X_r^{b_r}$, so the integrand is periodic
and we may write
\begin{equation}
\label{eqn:scalprodX}
\langle \psi_1(\la), \psi_2(\la) \rangle = (-2 \pi i)^{-r} \int_{\tilde{C}} X_1^{b_1} \cdots X_r^{b_r}
S(\phi_1(X), \phi_2(X^{-1})) \frac{dX_1}{X_1}\wedge\dots\wedge \frac{ dX_r}{X_r}
\end{equation}
where $\tilde{C}$ is a torus
$\{ X\ |\ | X_j | = \epsilon, j =1, \dots, r \}$ with
$\epsilon < 1$.
The torus
$\tilde{C}$ doesn't cross the poles of $\phi_1(X)$ or $\phi_2(X^{-1})$.
\end{proof}

\begin{prop}
\label{prop:H0symm}
The trigonometric KZB operators $H_0, H_1, \dots, H_n$ are symmetric with respect to
$\left\langle \  , \right\rangle$ on $\oplus_{\beta \in Q} \mc{A}(\xi + \beta) \otimes V[0]$.
\end{prop}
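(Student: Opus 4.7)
The plan is to split each of $H_0, H_1, \dots, H_n$ into a derivative part and an $\End(V[0])$-valued multiplication-operator part and handle the two separately. The derivative parts will be treated by integration by parts on the torus $\tilde C$ from \Ref{eqn:scalprodX}, where boundary terms vanish because the integrand $S(\psi_1(\la), \psi_2(-\la))$ is periodic in each $\la_\nu$ whenever $\psi_1 \in \mc A(\xi+\beta_1) \otimes V[0]$ and $\psi_2 \in \mc A(\xi+\beta_2) \otimes V[0]$ satisfy $\beta_1 - \beta_2 \in Q$. The multiplication parts will be controlled by the contravariance identity $S(g^{(p)}u, v) = S(u, \tau(g)^{(p)} v)$ for the tensor Shapovalov form, where $\tau = a\omega|_\g$ is the Chevalley anti-involution; it satisfies $\tau(h_j) = h_j$, $\tau(e_j) = f_j$, and on root vectors $\tau(e_\al) = c_\al e_{-\al}$ with $c_\al c_{-\al} = 1$, which implies $(\tau \otimes \tau)\Omega = \Omega$. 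Combined with the sign flip $\la \mapsto -\la$ built into \Ref{eqn:integraldef}, symmetry of a multiplication operator $A(\la)$ on $V$ reduces to the identity $\tau(A(\la)) = A(-\la)$, where $\tau(A(\la))$ denotes the $S$-adjoint.

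I handle $H_p(z)$ with $p \ge 1$ first, using formula \Ref{eqn:H_p}. For the derivative term $-\sum_\nu h_\nu^{(p)} \der_{\la_\nu}$ I move $h_\nu^{(p)}$ across $S$ using $\tau(h_\nu) = h_\nu$, integrate by parts in $\la_\nu$, and apply the chain rule $\der_{\la_\nu}[\psi_2(-\la)] = -(\der_{\la_\nu}\psi_2)(-\la)$; the two sign changes cancel, giving symmetry. For the multiplication part, I check $\tau(A(\la)) = A(-\la)$ term by term: the coefficient $(Z_p + Z_s)/(Z_p - Z_s)$ of $\Omega^{(p,s)}$ is $\la$-independent and $\Omega^{(p,s)}$ is $S$-self-adjoint since $(\tau \otimes \tau)\Omega = \Omega$; the coefficient $(1 + X_\al)/(1 - X_\al)$ of $\Omega_\al^{(p,s)} - \Omega_{-\al}^{(p,s)}$ is odd under $\la \mapsto -\la$ because $X_\al(-\la) = X_\al(\la)^{-1}$, while the $S$-adjoint of $\Omega_\al^{(p,s)}$ is $\Omega_{-\al}^{(p,s)}$, so the operator itself is $S$-anti-self-adjoint. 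The two sign flips cancel in each summand.

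The operator $H_0$ is treated the same way. The Laplacian $\triangle$ is symmetric after two integrations by parts. For the potential $-\frac{1}{4\pi i} \sum_{\al \in \Dl_+} \pi^2 \sin^{-2}(\pi\al(\la))(e_\al e_{-\al} + e_{-\al} e_\al)$, the coefficient $\sin^{-2}(\pi\al(\la))$ is even under $\la \mapsto -\la$, and $e_\al e_{-\al} + e_{-\al} e_\al$ is fixed by $\tau$ as an element of $U(\g)$ (indeed, $\tau(e_\al e_{-\al}) = \tau(e_{-\al})\tau(e_\al) = c_{-\al} c_\al\, e_\al e_{-\al} = e_\al e_{-\al}$, and similarly for $e_{-\al} e_\al$). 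Symmetry follows. The main obstacle is the algebraic bookkeeping in the multiplication part, namely matching the $\la$-parity of each scalar coefficient with the $\tau$-action on the accompanying operator factor; the key algebraic input $(\tau \otimes \tau)\Omega = \Omega$ follows from $\tau$ being an isometry of the invariant form $(\,,\,)$. The analytic points are routine: $H_p$ and $H_0$ preserve each $\mc A(\xi+\beta) \otimes V[0]$ (as in the proof of Proposition \ref{prop:KZBpreserves}), the contour $\tilde C$ lies inside the unit polydisk away from the poles $X_\al = 1$, and periodicity of the integrand eliminates all boundary terms.
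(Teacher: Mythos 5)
Your proof is correct and follows essentially the same route as the paper's: split each operator into its derivative part (handled by integration by parts using periodicity of the integrand) and its multiplication part (handled by Shapovalov contravariance, matching the parity of each scalar coefficient under $\la \mapsto -\la$ against the adjoint of the accompanying operator, e.g.\ the odd coefficient of the anti-self-adjoint $\Omega_\al^{(p,s)} - \Omega_{-\al}^{(p,s)}$). Your extra bookkeeping with $\tau = a\omega$ and the constants $c_\al$ only makes explicit what the paper asserts directly, and the sign conventions all cancel in the quadratic expressions as you indicate.
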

\begin{proof}
Recall that $H_0$ is
\[
H_0 = \frac{1}{4 \pi i} \triangle - \frac{1}{4\pi i} \sum_{\al \in \Dl_+} \frac{\pi^2}{\sin^2(\pi \al(\la))}
(e_\al e_{-\al} + e_{-\al} e_\al).
\]
Since the integrand of (\ref{eqn:integraldef}) is periodic, the Laplace operator $\triangle$ is symmetric
with respect to $\left\langle \  , \right\rangle$.
For each $\al$, the operator $e_\al e_{-\al}$ is adjoint to $e_{-\al} e_\al$ with respect to Shapovalov form, and
$\sin^{-2}(\pi\al(\la))$ is an even function of $\la$, so these terms are symmetric.

For $p = 1, \dots, n$, the operator $H_p(z)$ is given by the formula
\[
H_p(z) = -\sum_\nu h_\nu^{(p)} \partial_{\la_\nu} + \pi \sum_{s:s\neq p} \left[
\cot(\pi(z_p-z_s)) \Omega^{(p,s)}
- \sum_{\al \in \Dl_+} \cot(\al(\la)) (\Omega_\al^{(p,s)} - \Omega_{-\al}^{(p,s)}) \right].
\]
Each $\partial_{\la_\nu}$ is symmetric by integration by parts, and $h_\nu^{(p)}$ is symmetric
with respect to Shapovalov form. The operator $\Omega^{(p,s)}$ is the symmetric invariant tensor
acting on the $p$th and $s$th factor, and is symmetric with respect to Shapovalov form. Each
$\Omega_\al^{(p,s)}$ is adjoint to $\Omega_{-\al}^{(p,s)}$ with respect to Shapovalov form. Since
$\cot(\al(\la))$ is an odd function of $\la$, each $\cot(\al(\la)) (\Omega_\al^{(p,s)} - \Omega_{-\al}^{(p,s)})$
is self-adjoint with respect to $\langle \, , \, \rangle$.
\end{proof}

In fact, the elliptic KZB operators are symmetric with respect to $\langle \, , \, \rangle$ as well.

\subsubsection{Eigenfunctions of $H_0$}

\begin{prop}
\label{prop:crosstermsorth}
Let $\beta \in Q$ be nonzero. For $\xi \in \h^*$,
$u \in V[0]_{\xi-\rho}$ and $v \in V[0]_{\xi + \beta - \rho}$,
we have
\[
\langle \psi^\xi_u, \psi^{\xi + \beta}_v \rangle =0.
\]
\end{prop}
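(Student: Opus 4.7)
The plan is to exploit the fact that $\psi^\xi_u$ and $\psi^{\xi+\beta}_v$ are eigenfunctions of $H_0$ belonging (by Proposition~\ref{prop:eigenfunct}) to the spaces $E(\xi)$ and $E(\xi+\beta)$ respectively, combined with the symmetry of $H_0$ with respect to $\langle\,,\,\rangle$ proved in Proposition~\ref{prop:H0symm}. This is the standard orthogonality-of-eigenvectors argument for a symmetric operator with distinct eigenvalues.

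Concretely, first I record that $H_0\psi^\xi_u = \pi i(\xi,\xi)\psi^\xi_u$ and $H_0\psi^{\xi+\beta}_v = \pi i(\xi+\beta,\xi+\beta)\psi^{\xi+\beta}_v$. Next, applying Proposition~\ref{prop:H0symm} to the pair $(\psi^\xi_u,\psi^{\xi+\beta}_v) \in \mc{A}(\xi)\otimes V[0] \oplus \mc{A}(\xi+\beta)\otimes V[0]$ yields
\[
\pi i(\xi,\xi)\langle\psi^\xi_u,\psi^{\xi+\beta}_v\rangle
=\langle H_0\psi^\xi_u,\psi^{\xi+\beta}_v\rangle
=\langle \psi^\xi_u,H_0\psi^{\xi+\beta}_v\rangle
=\pi i(\xi+\beta,\xi+\beta)\langle\psi^\xi_u,\psi^{\xi+\beta}_v\rangle,
\]
so that $(2(\xi,\beta)+(\beta,\beta))\langle\psi^\xi_u,\psi^{\xi+\beta}_v\rangle = 0$. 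For $\xi$ generic (specifically, $\xi$ avoiding the affine hyperplane $2(\xi,\beta)+(\beta,\beta)=0$, on top of the genericity condition (\ref{eqn:functiongeneric}) needed for $\psi^\xi_u$ and $\psi^{\xi+\beta}_v$ to be defined) the scalar prefactor is nonzero and the pairing vanishes.

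The remaining issue, which I anticipate to be the mild technical obstacle, is handling values of $\xi$ on the hyperplane $2(\xi,\beta)+(\beta,\beta)=0$ where the eigenvalues collide. For fixed $u,v \in V[0]$, the contour integral expression (\ref{eqn:scalprodX}) shows that $\langle\psi^\xi_u,\psi^{\xi+\beta}_v\rangle$ depends meromorphically on $\xi$ in any region where the constructions $u \in V[0]_{\xi-\rho}$ and $v \in V[0]_{\xi+\beta-\rho}$ make sense (the dependence of $\psi^\xi_u$ on $\xi$ is meromorphic via the explicit formula from Proposition~\ref{prop:eigenfunct} and the Shapovalov inverse). Since this meromorphic function vanishes on the complement of a proper affine subspace, it vanishes identically wherever it is defined, completing the proof.
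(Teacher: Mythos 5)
Your proof is correct and follows essentially the same route as the paper: orthogonality of $H_0$-eigenfunctions with distinct eigenvalues via Proposition~\ref{prop:H0symm}, followed by analytic continuation in $\xi$ to handle the degenerate locus. You are in fact slightly more explicit than the paper about which hyperplane $2(\xi,\beta)+(\beta,\beta)=0$ must be avoided for the eigenvalues to differ, which is a welcome precision.
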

\begin{proof}
For $\xi$ satisfying (\ref{eqn:functiongeneric}),
$\psi^\xi_u$ and $\psi^{\xi+\beta}_v$ have different eigenvalues
with respect to $H_0$, so Proposition \ref{prop:H0symm} implies
they are orthogonal. The product
$\langle \psi^\xi_u, \psi^{\xi + \beta}_v \rangle$
is analytic in $\xi$, since the functions $\psi^\xi_u$ and $\psi^{\xi+\beta}_v$
are analytic, so the proposition holds for all $\xi$.
\end{proof}

For generic $\xi$, Proposition \ref{prop:crosstermsorth} implies that the
spaces $E(\xi)$ and $E(\xi + \beta)$ are orthogonal for $\beta \in Q$ nonzero.
For $\xi$ such that $(\xi, \al_j^\vee)$ is an integer for some simple root $\al_j$
and certain $\beta$, the spaces $E(\xi)$ and $E(\xi + \beta)$ are not disjoint,
so are not orthogonal.

Recall that the definition of $A_X$ is $A_X(\bs{1}) = \bs{1}$ and
\[
A_X(F_{\beta_1} \cdots F_{\beta_m}) =
\sum_{\sigma \in S_m} A_X^\sigma(F_{\beta_1} \cdots F_{\beta_m})
\]
with
\[
A_X^\sigma(F_{\beta_1} \cdots F_{\beta_m}) = \prod_{k=1}^m
\frac{X^{a_k^\sigma + 1}_{\beta_{\sigma(k)}}}
{1 - X_{\beta_{\sigma(1)}} \cdots X_{\beta_{\sigma(k)}}} F_{\beta_1} \cdots F_{\beta_m}
\]
where $a^\sigma_k$ is defined as the cardinality of the subset of $\{k, \dots, m-1\}$
consisting of those $j$ satisfying $\sigma(j) > \sigma(j+1)$.
It is clear that $A_X(F_{\beta_1} \cdots F_{\beta_m})$ is zero at $X=0$ if $m>0$.

\begin{lem}
\label{lem:Aantipode}
Let $X^{-1}$ denote $(X_1^{-1}, \dots, X_r^{-1})$. The map
$A_{X^{-1}}: U(\n_-) \arr \mc{A} \otimes U(\n_-)$ is regular at $X=0$ with
\[
\lim_{X=0} A_{X^{-1}} = a,
\]
where $a$ is the antipode.
\end{lem}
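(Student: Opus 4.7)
The plan is to work directly with the explicit formula for $A_X^\sigma$ by substituting $X_j \mapsto X_j^{-1}$ in each factor, clear denominators, and track monomials carefully. For each $\sigma\in S_m$ and each $k$, I would use the identity
\[
\frac{1}{1 - X^{-1}_{\beta_{\sigma(1)}}\cdots X^{-1}_{\beta_{\sigma(k)}}}
= \frac{-\,X_{\beta_{\sigma(1)}}\cdots X_{\beta_{\sigma(k)}}}{1 - X_{\beta_{\sigma(1)}}\cdots X_{\beta_{\sigma(k)}}}
\]
so that $A_{X^{-1}}^\sigma(F_{\beta_1}\cdots F_{\beta_m})$ becomes $(-1)^m$ times a rational function whose denominator is the original denominator $\prod_k\bigl(1 - X_{\beta_{\sigma(1)}}\cdots X_{\beta_{\sigma(k)}}\bigr)$, and whose numerator is a monomial in the $X_{\beta_{\sigma(j)}}$.

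Next I would count, for a fixed $\sigma$, the exponent of each $X_{\beta_{\sigma(j)}}$ in the resulting numerator. The variable $X_{\beta_{\sigma(j)}}$ picks up exponent $-(a_j^\sigma+1)$ from the conversion of the $X^{a_j^\sigma+1}$ factor and exponent $+1$ from each of the $m-j+1$ clearing factors in which it appears (those corresponding to $k\ge j$). Adding these gives net exponent $m - j - a_j^\sigma$, and because $a_j^\sigma = \sum_{i=j}^{m-1} d_i^\sigma$ with $d_i^\sigma\in\{0,1\}$, this exponent is always $\ge 0$. Since the denominator is a product of factors of the form $1-(\text{monomial})$, each equal to $1$ at $X=0$, it follows that $A_{X^{-1}}(F_{\beta_1}\cdots F_{\beta_m})$ is regular at $X=0$.

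To compute the limit, I would observe that at $X=0$ only those $\sigma$ survive for which \emph{every} exponent $m-j-a_j^\sigma$ vanishes. Setting $j=1$ forces $a_1^\sigma = m-1$, i.e.\ $d_i^\sigma = 1$ for all $i=1,\dots,m-1$, which pins $\sigma$ down to the unique reverse permutation $\sigma_{\mathrm{rev}}(k)=m+1-k$; this value of $\sigma$ does satisfy the remaining conditions $a_j^\sigma = m-j$ for all $j$ automatically. Substituting $\sigma=\sigma_{\mathrm{rev}}$ and the overall sign $(-1)^m$ yields
\[
\lim_{X=0} A_{X^{-1}}(F_{\beta_1}\cdots F_{\beta_m})
= (-1)^m F_{\beta_m}F_{\beta_{m-1}}\cdots F_{\beta_1}
= a(F_{\beta_1}\cdots F_{\beta_m}),
\]
using that $a$ is an anti-automorphism with $a(F_{\beta_k}) = -F_{\beta_k}$. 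Since the previous lemma shows $A_X$ is well defined on $U(\n_-)$, and both $\lim_{X=0} A_{X^{-1}}$ and $a$ are linear, this identification of values on monomials suffices.

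The main obstacle is the combinatorial bookkeeping of the exponents: one must be careful that the substitution $X_j\mapsto X_j^{-1}$ does not generate spurious singularities along the hyperplanes $X_{\beta_{\sigma(1)}}\cdots X_{\beta_{\sigma(k)}}=1$ (it does not, because those denominators are preserved after multiplication), and that the vanishing analysis at $X=0$ pins down a single permutation. Once the exponent counting is done cleanly, the verification that the surviving term equals $a(F_{\beta_1}\cdots F_{\beta_m})$ is immediate from the definition of the antipode.
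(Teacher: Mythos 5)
Your argument is correct and is essentially identical to the paper's proof: the paper likewise rewrites $A_{X^{-1}}^\sigma$ as $\prod_{k=1}^m X_{\beta_{\sigma(k)}}^{m-k-a_k^\sigma}\big/\bigl(X_{\beta_{\sigma(1)}}\cdots X_{\beta_{\sigma(k)}}-1\bigr)$ times the reordered monomial, deduces regularity from $a_k^\sigma\le m-k$, and observes that only the reversal permutation survives at $X=0$, giving $(-1)^m F_{\beta_m}\cdots F_{\beta_1}=a(F_{\beta_1}\cdots F_{\beta_m})$. Your only cosmetic difference is carrying the sign $(-1)^m$ out front rather than absorbing it into the denominators.
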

\begin{proof}
We have the formula
\[
A_{X^{-1}}^\sigma(F_{\beta_1} \cdots F_{\beta_m}) =
\prod_{k=1}^m \frac{X_{\beta_{\sigma(k)}}^{m-k - a^\sigma_k}}
{X_{\beta_{\sigma(1)}} \cdots X_{\beta_{\sigma(k)}} - 1}
F_{\beta_{\sigma(1)}} \cdots F_{\beta_{\sigma(m)}}.
\]
Since $a_k^\sigma$ equals the cardinality of a subset of
$\{k, \dots, m-1\}$, the expression is regular at $X=0$.
In fact, $A_{X^{-1}}^\sigma(F_{\beta_1} \cdots F_{\beta_m})$
is nonzero only if $a^\sigma_k = m-k$ for every $k$. This holds
only if $\sigma$ is the permutation sending each $k$ to $m-k+1$.
Denoting this permutation by $\sigma_0$, we have
\[
\lim_{X=0} A_{X^{-1}}^{\sigma_0}(F_{\beta_1} \cdots F_{\beta_m}) =
(-1)^m F_{\beta_m} \cdots F_{\beta_2} F_{\beta_1},
\]
which is the antipode map $a$ on $U(\n_-)$.
\end{proof}

\begin{prop}
\label{prop:eigentrigprod}
\cite{EV2}
For $\xi \in \h^*$ and $u, v \in V[0]_{\xi-\rho}$,
we have the relation
\[
\langle \psi^\xi_u, \psi^\xi_v \rangle =
\langle u, v \rangle_\xi.
\]
\end{prop}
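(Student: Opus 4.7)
The plan is to evaluate the integral defining $\langle \psi^\xi_u, \psi^\xi_v \rangle$ by reducing it to a residue computation at $X = 0$ in polydisc coordinates. First I would write $\psi^\xi_u(\la) = e^{2\pi i \xi(\la)} \phi_u(X)$ with
\[
\phi_u(X) = u + \sum_{j,k > 0} (S^{-1}_{\xi - \rho})_{jk} A_X(F_j) \omega(F_k) u,
\]
and analogously for $\psi^\xi_v$. Substituting $\la \to -\la$ sends each $X_j = e^{-2\pi i \alpha_j(\la)}$ to $X_j^{-1}$, so $\psi^\xi_v(-\la) = e^{-2\pi i \xi(\la)} \phi_v(X^{-1})$; the two exponential factors cancel in the Shapovalov pairing, and formula (\ref{eqn:scalprodX}) with $\beta = 0$ gives
\[
\langle \psi^\xi_u, \psi^\xi_v \rangle = (-2\pi i)^{-r} \int_{\tilde{C}} S(\phi_u(X), \phi_v(X^{-1})) \, \frac{dX_1}{X_1} \wedge \cdots \wedge \frac{dX_r}{X_r},
\]
where $\tilde{C}$ is the torus $|X_j| = \epsilon < 1$.

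Next I would verify that the integrand is holomorphic on the open polydisc $\{|X_j| < 1\}$ containing $\tilde{C}$. For $\phi_u$, the denominators of the $A_X^\sigma$ have the form $1 - X_{\beta_{\sigma(1)}} \cdots X_{\beta_{\sigma(k)}}$, which do not vanish when all $|X_j| < 1$. For $\phi_v(X^{-1})$, Lemma \ref{lem:Aantipode} asserts regularity at $X = 0$, and inspection of the explicit formula $A_{X^{-1}}^\sigma(F) = \prod_k X_{\beta_{\sigma(k)}}^{m-k-a_k^\sigma}/(X_{\beta_{\sigma(1)}} \cdots X_{\beta_{\sigma(k)}} - 1)$ shows that the denominators do not vanish anywhere in the open polydisc. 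Holomorphicity of the integrand then allows the iterated Cauchy integral formula to reduce the torus integral to the value of the integrand at $X = 0$; the factor $(-2\pi i)^r$ produced by the clockwise orientation of $\tilde{C}$ (inherited from the $\la_j$-contour from $0 - i\delta$ to $1 - i\delta$) cancels the prefactor $(-2\pi i)^{-r}$.

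Finally, I would compute the value at $X = 0$. Since $A_X(F) = 0$ at $X = 0$ for any $F \in U(\n_-)$ of positive weight, $\phi_u(0) = u$. By Lemma \ref{lem:Aantipode}, $A_{X^{-1}}(F) \to a(F)$ as $X \to 0$, so
\[
\lim_{X \to 0} \phi_v(X^{-1}) = v + \sum_{l, m > 0} (S^{-1}_{\xi - \rho})_{lm} a(F_l) \omega(F_m) v = \mathcal{Q}(\xi) v
\]
by the definition of $\mathcal{Q}$. Hence the integral equals $S(u, \mathcal{Q}(\xi) v)$, which is $\langle u, v \rangle_\xi$ by Definition \ref{def:trigprod}. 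The one step requiring substantial thought is the identification, via Lemma \ref{lem:Aantipode}, of $\lim_{X \to 0} A_{X^{-1}}$ with the antipode, together with the fact that this limit is attained by a function holomorphic throughout the open polydisc; once this input is granted, the rest of the proof is a direct residue computation.
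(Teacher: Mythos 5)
Your proposal is correct and follows essentially the same route as the paper's own proof: rewrite the pairing via formula (\ref{eqn:scalprodX}), use Lemma \ref{lem:Aantipode} to identify $\lim_{X\to 0} A_{X^{-1}}$ with the antipode, and reduce the torus integral to evaluation at $X=0$, yielding $S(u,\mathcal{Q}(\xi)v)=\langle u,v\rangle_\xi$. The only difference is that you spell out the holomorphicity of the integrand on the polydisc and the orientation bookkeeping, which the paper leaves implicit.
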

\begin{proof}
Recall that by Proposition \ref{prop:eigenfunct}, for $u \in V[0]_{\xi-\rho}$, $\psi^\xi_u$
has the form
$\psi^\xi_u(\la)=$
\linebreak
$e^{2\pi i \xi(\la)}\left(\sum_j A_X (F_j) u_j \right)$ with the vectors
$u_j$ defined by the condition
$\Xi(\xi - \rho)(\bs{1}_{\xi-\rho} \otimes u) = \sum_{j\geq 0} F_j \bs{1}_{\xi - \rho} \otimes u_j$
for a homogeneous basis $\{F_j\}$ of $U(\n_-)$.
It follows that $\psi^{\xi}_v(-\la)$
has the form
\[
\psi^{\xi}_v(-\la) = e^{-2 \pi i \xi(\la)}\left(\sum_{k \geq 0} A_{X^{-1}} (F_k) v_k \right)
\]
where $X^{-1} = (X_1^{-1}, \dots, X_r^{-1})$,
and $v_k$ is defined by the condition
\[
\Xi(\xi - \rho)(\bs{1}_{\xi -\rho} \otimes v) = \sum_{k \geq 0} F_k \bs{1}_{\xi - \rho} \otimes v_k .
\]

Formula (\ref{eqn:scalprodX}) gives in this case
\[
\langle \psi^\xi_u(\la), \psi^\xi_v(\la) \rangle = (-2 \pi i)^{-r} \int_{\tilde{C}}
S\left( \sum_j A_X (F_j) u_j  , \sum_k A_{X^{-1}} (F_k) v_k \right) \frac{dX_1}{X_1}\wedge\dots\wedge \frac{ dX_r}{X_r}.
\]
The expression $\sum_{j\geq 0} A_X (F_j) u_j$ is regular at $X=0$ with value $u$. Lemma \ref{lem:Aantipode}
gives that $\sum_{k \geq 0} A_{X^{-1}} (F_k) v_k$ is regular at $X=0$ with value
$\sum_{k \geq 0} a(F_k) v_k$, where $a$ denotes the antipode map. The integration is just evaluation
at $X=0$:
\[
\langle \psi^\xi_u(\la), \psi^\xi_v(\la) \rangle = S(u, \sum_{k \geq 0} a(F_k) v_k).
\]

The definition of $\Xi(\xi - \rho)(\bs{1}_{\xi-\rho} \otimes v)$
gives each $v_k$ as
\[
v_k = \sum_{\ell \geq 0} (S_{\xi-\rho}^{-1})_{k \ell} \omega(F_\ell)v,
\]
so we have
\[
\langle \psi^\xi_u(\la), \psi^\xi_v(\la) \rangle = S\left( u, \sum_{k, \ell \geq 0} (S_{\xi-\rho}^{-1})_{k \ell}
a(F_k) \omega(F_\ell) v \right).
\]
The expression $\sum_{k, \ell \geq 0} (S_{\xi-\rho}^{-1})_{k \ell}
a(F_k) \omega(F_\ell) v$ is the definition of $\mc{Q}(\xi)v$, so
\[
\langle \psi^\xi_u(\la), \psi^\xi_v(\la) \rangle = S(u, \mc{Q}(\xi)v)
\]
holds, which gives the proposition.
\end{proof}

\section{Bethe ansatz}
\label{sec Bethe}
\subsection{Rational Gaudin Model}
Let $V = V_1 \otimes \dots \otimes V_n$, where $V_p$ are irreducible highest weight
$\g$-modules of highest weight $\La_p$ with highest weight vectors $v_p$.
Set $\mathbf{\La}  = (\La_1,\dots, \La_n)$ and $\La = \sum_p \La_p$.
Let $\mathbf{m} = (m_1,\dots, m_r)$ be
a collection of non-negative integers, $m = \sum_j m_j$ and
$\mathbf{m}_\al = \sum_{j=1}^r m_j \al_j$.

The Bethe ansatz gives
simultaneous eigenvectors to the operators $K_p(z_1,\dots,z_n)$
on $\sing V[\La - \mathbf{m}_\al]$.

\subsubsection{Master function}
Let
\[
t = (t_1^{(1)},\dots, t_{m_1}^{(1)}, t_1^{(2)},\dots, t_{m_2}^{(2)}, \dots, t_1^{(r)},
\dots, t_{m_r}^{(r)}) \in \mathbb{C}^m.
\]
We express this ordering of coordinates
 as $(j,k) < (j',k')$ if
either
 $j < j'$ or
  $j = j'$ and $k <k'$;
  here
$(j,k)$ corresponds to $t^{(j)}_k$.

The master function $\Phi_K(t,z,\mathbf{\La})$
is defined as
\beq
\label{eq:master}
\Phi_K(t,z,\mathbf{\La}) = 
\sum_{(j,k) < (j',k')} (\al_j,\al_{j'}) \log(t^{(j)}_k - t^{(j')}_{k'})
- \sum_{(j,k)} \sum_{s=1}^n (\al_j, \La_s) \log(t^{(j)}_k - z_s).
\eeq
Critical points of $\Phi_K$ with respect to the $t$ variables are
are defined as solutions to the
equations
\[
\sum_{(j',k') \neq (j,k)} \frac{(\al_j, \al_{j'})}{t^{(j)}_k - t^{(j')}_{k'}}
- \sum_{s=1}^n \frac{(\al_j, \La_s)}{t^{(j)}_k - z_s}
= 0, \qquad 1 \leq j \leq r, \
1 \leq k \leq m_i.
\]
The group $\Sigma_{\mathbf{m}} = \Sigma_{m_1} \times \dots \times \Sigma_{m_r}$
acts on the critical set of $\Phi_K$ by permutation of coordinates with the same upper index.

\subsubsection{Eigenvectors}
We construct the weight function $u: \mathbb{C}^m \rightarrow V[\La - \sum_{j=1}^r m_j \al_j]$.
Let $\mathbf{b} = (b_1,\dots,b_n)$, with each $b_p$ a non-negative integer and $\sum_{p=1}^n b_p = m$.
The collection of these partitions will be denoted $B$. Let $\Sigma(\mathbf{b})$ denote the set of
bijections $\sigma$ from
the set of pairs $\{(p,s): 1\leq p \leq n, \ 1\leq s \leq b_p \}$
to the set of variables $\{t^{(1)}_1, \dots, t^{(1)}_{m_1}, \dots, t^{(1)}_1,\dots, t^{(r)}_{m_r} \}$.
Let $c(t^{(j)}_k) = j$ be the color function, and set
$c_\sigma((p,s)) = c(\sigma((p,s)))$.

For each $\mathbf{b} \in B$ and $\sigma \in \Sigma(\mathbf{b})$, we assign the vector
\[
f^\sigma_\mathbf{b} v = f_{c_\sigma((1,1))} \cdots f_{c_\sigma((1,b_1))} v_1 \otimes \cdots
\otimes f_{c_\sigma((n,1))} \cdots f_{c_\sigma((n,b_n))} v_n.
\]
Different $\sigma$ may give the same $f^\sigma_\mathbf{b}$. To $\mathbf{b}$ and $\sigma$,
we also assign the rational function
\[
u^\sigma_\mathbf{b} = u^\sigma_{\mathbf{b}, 1}(z_1) u^\sigma_{\mathbf{b}, 2}(z_2)
\dots u^\sigma_{\mathbf{b}, n}(z_n),
\]
where
\beq
\label{eq:omegafunction}
u^\sigma_{\mathbf{b}, p}(x) = \frac{1}{((\sigma(p,1)) - \sigma(p,2))(\sigma(p,2) - \sigma(p,3))
\cdots (\sigma(p,b_p -1) - \sigma(p,b_p))(\sigma(p,b_p) - x)}.
\eeq
Then we have
\beq
\label{eq:ratomega}
u(t,z) = \sum_{b\in B} \sum_{\sigma \in \Sigma(\mathbf{b})} u^\sigma_\mathbf{b} f^\sigma_\mathbf{b} v.
\eeq

\begin{thm}
\label{thm:ratbethe}
Let $t_{cr}$ be an isolated critical point of $\Phi_K(\, \cdot \, ,z, \mathbf{\La}
)$. Then $u(t_{cr},z)$ is a
well defined vector in $\sing V[\La - \mathbf{m}_\al]$ \cite{MV}.
This vector is an eigenvector of the rational
Gaudin operators $K_1(z), \dots, K_n(z)$ \cite{B, BF, RV}.
The eigenvalue of $u(t_{cr}, z)$ with respect to $K_p(z)$
is equal to $\frac{\partial}{\partial z_p} \Phi_K(t_{cr}, z, \bs{\La})$ \cite{RV}.
\end{thm}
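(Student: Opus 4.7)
The plan is to establish three assertions in sequence: well-definedness of $u(t_{cr},z)$ as a weight vector in $V[\La - \mathbf{m}_\al]$, the singularity property $\n_+ u(t_{cr},z)=0$, and the eigenvector identity with the claimed eigenvalue.

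First I would address well-definedness. The sum \Ref{eq:ratomega} is manifestly a vector of weight $\La - \mathbf{m}_\al$, since each $f^\sigma_\mathbf{b} v$ has this weight (the total number of $f_j$ applied equals $m_j$). The potential obstruction is that individual rational summands $u^\sigma_\mathbf{b}$ have poles along the diagonals $t^{(j)}_k = t^{(j')}_{k'}$ and along the hyperplanes $t^{(j)}_k = z_s$. Grouping terms by how the variables of a given color are distributed among the tensor factors, and using the commutation relations among the $f_j$, one verifies that after summing over $\sigma$ the apparent poles on diagonals with variables of the same color actually cancel. Isolated critical points of $\Phi_K$ automatically lie off the remaining divisors, so $u(t_{cr},z)$ is a well-defined vector. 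This is the content of the verification carried out in \cite{MV}.

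The second and third assertions follow from the key algebraic identity
\[
K_p(z) u(t,z) - \frac{\partial \Phi_K}{\partial z_p}(t,z,\bs\La)\, u(t,z) = \sum_{(j,k)} \frac{\partial \Phi_K}{\partial t^{(j)}_k}(t,z,\bs\La)\, Y^{(j,k)}(t,z),
\]
for certain vector-valued rational functions $Y^{(j,k)}(t,z)$, together with the analogous identity $e_i u(t,z) = \sum_k \partial_{t^{(i)}_k} \Phi_K(t,z,\bs\La) \cdot Z^{(i,k)}(t,z)$ for each simple root generator $e_i$. Evaluating at $t=t_{cr}$ annihilates all right-hand sides, simultaneously giving $e_i u(t_{cr},z) = 0$ for every simple root (hence $\n_+ u(t_{cr},z) = 0$) and
\[
K_p(z) u(t_{cr},z) = \frac{\partial \Phi_K}{\partial z_p}(t_{cr},z,\bs\La)\, u(t_{cr},z),
\]
as claimed.

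The main obstacle is the explicit verification of the two identities above, which is mechanical but lengthy. One expands $\Omega^{(p,s)} = \Omega_0^{(p,s)} + \sum_{\al \in \Dl_+}(e_\al^{(p)} e_{-\al}^{(s)} + e_{-\al}^{(p)} e_\al^{(s)})$ and tracks how each summand acts on $f^\sigma_\mathbf{b} v$: the $\Omega_0$ part contributes via weights of the chains of lowering operators; the $e_\al \otimes e_{-\al}$ part transfers a lowering operator from factor $p$ to factor $s$ after commuting through the existing $f$'s; combining these actions with the rational prefactor $u^\sigma_\mathbf{b}$ yields, after rearrangement, exactly $\partial_{z_p} \Phi_K$ times $u(t,z)$ plus terms proportional to the critical point equations $\partial_{t^{(j)}_k} \Phi_K$. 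The parallel computation for $e_i$ is simpler but follows the same pattern. I would appeal to the explicit calculations in \cite{B, BF, RV} for the full identity rather than reproducing them in detail.
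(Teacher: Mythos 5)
The paper states Theorem \ref{thm:ratbethe} without proof, quoting it from \cite{MV}, \cite{B,BF,RV} and \cite{RV}, so there is no internal argument to compare against; your outline is the standard proof from those references and its core is right: the two ``off-shell'' identities expressing $K_p(z)u(t,z)-\partial_{z_p}\Phi_K\cdot u(t,z)$ and $e_i u(t,z)$ as combinations of the Bethe equations $\partial_{t^{(j)}_k}\Phi_K=0$, evaluated at $t_{cr}$, deliver both the singularity and the eigenvalue claims at once. One point in your well-definedness paragraph is inverted, though. The critical set of $\Phi_K$ is by definition contained in the complement of the singular locus of $\Phi_K$, i.e.\ of the diagonals $t^{(j)}_k=t^{(j')}_{k'}$ with $(\al_j,\al_{j'})\neq 0$ (in particular \emph{all} same-color diagonals, since $(\al_j,\al_j)\neq 0$) and of the hyperplanes $t^{(j)}_k=z_s$ with $(\al_j,\La_s)\neq 0$; those are the divisors an isolated critical point avoids automatically. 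The combinatorial cancellation after summing over $\sigma$ is needed precisely for the \emph{remaining} pole divisors of the individual terms $u^\sigma_{\mathbf b}$, namely $t^{(j)}_k=t^{(j')}_{k'}$ with $(\al_j,\al_{j'})=0$ and $t^{(j)}_k=z_s$ with $(\al_j,\La_s)=0$, on which a critical point may perfectly well lie. As written, your argument leaves exactly those cases uncovered; with the roles of the two classes of divisors swapped, the sketch agrees with the verification in \cite{MV} and \cite{RV}.
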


\begin{thm}
\label{thm nonzero}
 \cite{V4}\
 For  an isolated critical point $t_{cr}$ of $\Phi_K(\, \cdot \, ,z, \mathbf{\La})$,
 the vector $u(t_{cr},z)$ is nonzero.
\end{thm}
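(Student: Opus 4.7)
The plan is to prove non-vanishing of $u(t_{cr}, z)$ by combining the Shapovalov norm formula (central to this paper) with a deformation argument that handles degenerate isolated critical points. For a \emph{non-degenerate} isolated critical point, the argument is immediate: the norm formula gives $S(u(t_{cr}, z), u(t_{cr}, z))$ equal to a nonzero constant times the Hessian determinant of $\Phi_K$ at $t_{cr}$, which is nonzero by non-degeneracy. Hence $u(t_{cr}, z) \neq 0$ in this baseline case.

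For \emph{degenerate} isolated critical points, where the Hessian vanishes, I would use a deformation argument. Introduce a generic algebraic one-parameter deformation of the weights, $\bs{\La}(\ep)$ with $\bs{\La}(0) = \bs{\La}$, chosen so that for generic $\ep$ all critical points of $\Phi_K(\cdot, z, \bs{\La}(\ep))$ are non-degenerate and isolated. Since the critical set is cut out by polynomial equations whose coefficients depend algebraically on $\bs{\La}$, and $t_{cr}$ is isolated at $\ep = 0$, there is an algebraic family of critical points $t_{cr}(\ep)$ specializing to $t_{cr}$. The weight function $u(t_{cr}(\ep), z)$ is rational in $\ep$ and nonzero for generic $\ep$ by the non-degenerate case, so one examines the leading term of its Laurent expansion as $\ep \to 0$.

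The hard part will be ensuring that this leading $\ep$-term does not vanish identically, i.e.\ that no total cancellation occurs among the contributions in \Ref{eq:ratomega}. This requires controlling the local algebra at $t_{cr}$: by isolation, this local ring is Artinian of finite length equal to the multiplicity of the critical point, and one must relate this multiplicity to the rate at which $u(t_{cr}(\ep), z)$ might decay along the deformation. An alternative (and possibly cleaner) route is to pair $u(t_{cr}, z)$ directly with a specific vector in $V^*$, chosen so that the resulting matrix coefficient is a sum of products $u^\sigma_{\bs{b}}$ which can be analyzed explicitly: for instance, pair with the functional extracting the coefficient of $f_{i_1}\cdots f_{i_m} v_1 \otimes v_2 \otimes \cdots \otimes v_n$ (all lowering operators on the first tensor factor), then use the Bethe equations to argue that a suitable such coefficient cannot vanish when $t_{cr}$ is isolated. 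Choosing the right test functional, and showing that some such choice always works regardless of symmetries among the $t^{(j)}_k$'s, is the principal obstacle.
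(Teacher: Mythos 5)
First, note that the paper itself does not prove this theorem: it is quoted from \cite{V4} (with the $sl_{r+1}$ case attributed to \cite{MTV}), so there is no internal proof to compare against. Judged on its own terms, your proposal correctly disposes of the non-degenerate case --- by Theorem \ref{thm:ratbethenorm}, $S(u(t_{cr},z),u(t_{cr},z))={\rm Hess}_t\,\Phi_K(t_{cr},z,\mathbf{\La})\neq 0$ forces $u(t_{cr},z)\neq 0$ --- but the degenerate case, which is the entire content of the theorem beyond that easy observation, is left open, and you say so yourself. The gap is not merely technical. A deformation argument of the kind you sketch cannot close it: knowing that $u(t_{cr}(\epsilon),z)\neq 0$ for generic $\epsilon$ says nothing about the limit at $\epsilon=0$, and the one quantitative tool available, the norm formula, degenerates exactly there, since ${\rm Hess}_t\,\Phi_K$ vanishes at a degenerate critical point and so $S(u(t_{cr},z),u(t_{cr},z))=0$ carries no information. (There are also smaller issues: an isolated critical point of multiplicity $\mu>1$ splits under a generic deformation into $\mu$ colliding non-degenerate points whose branches $t_{cr}(\epsilon)$ are in general Puiseux, not rational, in $\epsilon$.) Your alternative route --- pairing with a hand-picked functional on $V$ and invoking the Bethe equations --- is likewise only stated as a hope, with the decisive step unproved.

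The idea you gesture at in passing, ``controlling the local algebra at $t_{cr}$,'' is in fact the right one and is essentially how \cite{V4} proceeds. For an isolated critical point the local algebra $A_{t_{cr}}=\mathcal{O}_{t_{cr}}/\bigl(\partial\Phi_K/\partial t^{(j)}_k\bigr)$ is Artinian of length equal to the multiplicity and carries the Grothendieck residue bilinear form, which is nondegenerate whether or not $t_{cr}$ is degenerate. One shows that the linear map sending a class $[g]\in A_{t_{cr}}$ to the corresponding combination of derivatives of the weight function at $t_{cr}$ intertwines this residue form with the Shapovalov form on $\sing V[\La-\mathbf{m}_\al]$; nondegeneracy of the residue form then makes this map injective, and since $[1]\neq 0$ in $A_{t_{cr}}$ and $[1]$ is sent to $u(t_{cr},z)$, the vector is nonzero. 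This upgrades the single scalar identity $S(u,u)={\rm Hess}_t\,\Phi_K$ to a statement about the whole multiplicity-$\mu$ local ring, which is exactly the extra leverage needed in the degenerate case and which your $\epsilon$-deformation does not supply.
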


For $\g = sl_{r+1}$ the fact that $u(t_{cr},z)$ is nonzero is proved in \cite{MTV}.

\subsubsection{Norms of eigenvectors}
For $t_{cr}$ a critical point of $\Phi_K(\, \cdot \, ,z, \mathbf{\La}, \mathbf{m})$,
let
\[
{\rm Hess}_t\,\Phi_K(t_{cr}, z, \mathbf{\La})
= {\rm det}\left(\frac{\partial^2 \Phi_K}{\partial t^{(j)}_k \partial t^{(j')}_{k'}}\right) (t_{cr})
\]
be the Hessian of $\Phi_K$.

\begin{thm}
\cite{V3}
\label{thm:ratbethenorm}
Let $t_{cr}$ be an isolated critical point of $\Phi_K(\, \cdot \, , z, \mathbf{\La}
)$.
Then
\[
S(u(t_{cr},z), u(t_{cr},z)) = {\rm Hess}_t\,\Phi_K(t_{cr}, z, \mathbf{\La}
)
\]
where $S$ is the tensor Shapovalov form on $V$.
\end{thm}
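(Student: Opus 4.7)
The plan is to expand both sides and match them at the critical point. Using \Ref{eq:ratomega}, I would begin with
\[
S(u(t,z),u(t,z)) = \sum_{\bs b,\bs b'\in B}\sum_{\sigma\in\Sigma(\bs b)}\sum_{\sigma'\in\Sigma(\bs b')} u^\sigma_{\bs b}(t,z)\, u^{\sigma'}_{\bs b'}(t,z)\, S\bigl(f^\sigma_{\bs b}v,\,f^{\sigma'}_{\bs b'}v\bigr).
\]
Because $S$ factorizes over $V = V_1 \otimes \cdots \otimes V_n$, and each $f^\sigma_{\bs b}v$ is a product of $f_j$-strings applied to highest weight vectors $v_p$, each Shapovalov pairing on the right factors as a product over $p$ of one-factor pairings in the Verma module $M_{\La_p}$. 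These pairings are polynomial in $(\La_p,\al_j)$ and $(\al_i,\al_j)$ and can be evaluated by repeated use of the Chevalley relations $[e_i,f_j] = \dl_{ij} h_i$.

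Next I would reorganize the resulting quadruple sum as the determinant of an explicit $m\times m$ matrix $M(t,z)$ indexed by the variables $t^{(j)}_k$. The off-diagonal entries of $M$ should be, up to a uniform sign, $(\al_j,\al_{j'})/(t^{(j)}_k - t^{(j')}_{k'})^2$, matching the mixed second partials of $\Phi_K$ from \Ref{eq:master}; after substituting the Bethe equations $\partial\Phi_K/\partial t^{(j)}_k(t_{cr}) = 0$, the diagonal entries should reduce to the remaining diagonal second partials. Taking determinants at $t_{cr}$ then produces $\on{Hess}_t\Phi_K(t_{cr},z,\bs\La)$. I would organize the proof as an induction on $m = \sum_j m_j$, the base case $m = 0$ giving $S(v,v) = 1$ and an empty Hessian equal to $1$; the inductive step peels off one simple-root factor $f_j$, views the corresponding rational factor from \Ref{eq:omegafunction} as a residue, and uses block expansion of the Hessian along the peeled row and column. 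Theorem \ref{thm nonzero} guarantees that the Bethe vector is nonzero, which ensures the determinantal identity is not degenerate.

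The main obstacle is the combinatorial identification in the second step: showing that the quadruple sum over $\bs b, \bs b', \sigma, \sigma'$ collapses to a single $m\times m$ determinant. This requires tracking the cancellations that arise when commuting $e_j$'s past $f_j$'s via the Chevalley relations inside the Shapovalov pairings, combined with careful use of the symmetrization under the group $\Sigma_{\bs m} = \Sigma_{m_1}\times\cdots\times\Sigma_{m_r}$ acting on same-colored $t$-variables. Carrying out this combinatorics uniformly across all simple Lie algebras $\g$, rather than type-by-type, is where the main technical work lies.
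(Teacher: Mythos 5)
The paper does not actually prove this statement: Theorem \ref{thm:ratbethenorm} is quoted from \cite{V3} (with the $sl_{r+1}$ case from \cite{MV}), so your outline must be judged on whether it could be completed into a proof. As it stands there is a genuine gap. The entire content of the theorem is concentrated in your second step, namely the claim that the quadruple sum over $\bs b,\bs b',\sigma,\sigma'$ collapses, after imposing the Bethe equations, to the determinant of the matrix of second partials of $\Phi_K$. You assert that the off-diagonal entries ``should be'' $(\al_j,\al_{j'})/(t^{(j)}_k-t^{(j')}_{k'})^2$ and that the diagonal ``should reduce'' correctly, but you give no mechanism by which a sum of products of the rational functions $u^\sigma_{\bs b}\,u^{\sigma'}_{\bs b'}$ weighted by Shapovalov pairings organizes itself into a single $m\times m$ determinant; labeling the required cancellations as ``the main technical work'' restates the problem rather than solving it. Note also that no such identity holds for generic $t$ -- the equality is special to critical points -- so whatever identity you match term-by-term must already have the Bethe equations built into it in a way your outline does not specify.

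The proposed induction on $m$ also does not close. If $t_{cr}$ is an isolated critical point of the $m$-variable master function, the point obtained by deleting one coordinate is not a critical point of any $(m-1)$-variable master function, so the inductive hypothesis cannot be applied to the peeled configuration; moreover, expansion of the Hessian determinant along one row and column produces a signed sum of cofactors, not the product structure that your residue step would need. The appeal to Theorem \ref{thm nonzero} is misplaced as well: nonvanishing of $u(t_{cr},z)$ plays no role in proving an equality of two scalars. The proofs in the literature take a different route: \cite{MV} works in type $A$ with an explicit analysis of the weight function, and \cite{V3} deduces the general case from a theorem about arrangements of hyperplanes, identifying the Shapovalov norm with the contravariant form of a special vector attached to the arrangement and proving the Hessian identity in that setting. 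A direct expand-and-match argument of the kind you describe is not known to work for general $\g$, and at minimum you would have to state and prove the determinantal identity you are currently assuming.
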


Theorem \ref{thm:ratbethenorm} was proved in \cite{MV} for $\g = sl_{r+1}$.

\begin{thm}
\cite{V3}
\label{thm:ratortho}
Let $t_{cr}$, $t'_{cr}$ be isolated critical points of $\Phi_K(\, \cdot \,, z, \mathbf{\La}
)$
lying in different orbits of $\Sigma_{\mathbf{m}}$. Then
\[
S(u(t_{cr},z), u(t'_{cr},z)) = 0.
\]
\end{thm}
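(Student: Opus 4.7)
The plan is to combine the $S$-symmetry of the rational Gaudin operators with the observation that distinct $\Sigma_{\bs m}$-orbits of critical points give distinct joint eigenvalue tuples. By Proposition \ref{prop:ratsym} each $K_p(z)$ is symmetric with respect to the tensor Shapovalov form $S$, and by Theorem \ref{thm:ratbethe} the Bethe vectors $u(t_{cr}, z)$ and $u(t'_{cr}, z)$ are simultaneous eigenvectors of $K_1(z), \ldots, K_n(z)$ with respective eigenvalues
\[
\lambda_p \;=\; \frac{\partial \Phi_K}{\partial z_p}(t_{cr}, z, \bs{\La}),
\qquad
\lambda'_p \;=\; \frac{\partial \Phi_K}{\partial z_p}(t'_{cr}, z, \bs{\La}).
\]
The standard identity $S(K_p(z) u, u') = S(u, K_p(z) u')$ then forces $(\lambda_p - \lambda'_p)\, S(u(t_{cr}, z), u(t'_{cr}, z)) = 0$ for every $p$, so it suffices to produce a single index $p$ with $\lambda_p \neq \lambda'_p$.

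Differentiating the formula \eqref{eq:master} gives
\[
\lambda_p \;=\; \sum_{(j,k)} \frac{(\al_j, \La_p)}{t^{(j)}_k - z_p},
\]
so the tuple $(\lambda_p)_{p=1}^n$ records, for each $p$, the pairing of the $\h$-valued residue data $\sum_{(j,k)} \al_j/(t^{(j)}_k - z_p)$ against $\La_p$. For generic values of $(z, \bs{\La})$ this data is rich enough to recover the colored multiset $\{(t^{(j)}_k, j)\}$ up to the $\Sigma_{\bs m}$-action, so $\lambda_p = \lambda'_p$ for every $p$ would force $t_{cr}$ and $t'_{cr}$ into the same orbit, contrary to hypothesis. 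Orthogonality thus follows for generic parameters, and extends to arbitrary $(z,\bs{\La})$ by analytic continuation, using that $S(u(t_{cr}, z), u(t'_{cr}, z))$ depends analytically on the parameters on the locus where the two critical-point branches persist in distinct orbits.

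The main obstacle is the orbit-separation step at non-generic parameter values, where the $n$ scalar eigenvalues $\lambda_p$ need not obviously encode the full colored multiset of critical coordinates. The cleanest alternative to the analytic continuation is to enlarge $\{K_p(z)\}$ to the full commutative Bethe algebra: its higher symmetric operators remain $S$-symmetric by the same invariance arguments, and their joint eigenvalues on Bethe vectors reconstruct all elementary symmetric functions of the $t^{(j)}_k$ within each color block, thereby unconditionally separating $\Sigma_{\bs m}$-orbits. Either route reduces the orthogonality statement to the elementary consequence of $S$-symmetry stated in the first paragraph.
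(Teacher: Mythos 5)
Your opening reduction is correct: Proposition \ref{prop:ratsym} together with Theorem \ref{thm:ratbethe} does give $(\lambda_p-\lambda'_p)\,S(u(t_{cr},z),u(t'_{cr},z))=0$ for every $p$, and the computation of $\lambda_p$ as $\sum_{(j,k)}(\al_j,\La_p)/(t^{(j)}_k-z_p)$ is right. The gap is the separation step, and it is a genuine one rather than a technicality. The joint spectrum you invoke consists of only $n$ scalars $\lambda_1,\dots,\lambda_n$, while a critical orbit is determined by $m=\sum_j m_j$ coordinates; when $m>n$ there is no reason these scalars should recover the colored multiset $\{(t^{(j)}_k,j)\}$ even generically, and for higher-rank $\g$ the quadratic operators $K_1(z),\dots,K_n(z)$ are known not to have simple spectrum on $\sing V[\La-\mathbf{m}_\al]$. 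Neither proposed repair closes this. Enlarging to the full Bethe algebra and asserting that its eigenvalues reconstruct the symmetric functions of the $t^{(j)}_k$ within each color block is itself a deep theorem, available for $\g=\mathfrak{gl}_{N}$ by \cite{MTV} but not for general simple $\g$, and those higher Hamiltonians and their $S$-symmetry are not constructed anywhere in your argument. The analytic-continuation step is also unjustified: as $(z,\mathbf{\La})$ vary, critical points collide, escape to infinity, or merge orbits, so vanishing on a generic locus does not propagate to a fixed non-generic parameter without an irreducibility and persistence argument you have not supplied. Note finally that simplicity of the spectrum would cut the wrong way: if two Bethe vectors shared all eigenvalues in a simple-spectrum situation they would be proportional, and Theorem \ref{thm:ratbethenorm} would then make their pairing a nonzero multiple of a Hessian rather than zero.

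For comparison, the paper offers no proof of this statement; it is quoted from \cite{V3} (with the $\mathfrak{sl}_{r+1}$ case in \cite{MV}). The argument there is of a different nature: rather than separating eigenvalues, it identifies the Shapovalov pairing of Bethe vectors with a residue-type bilinear form attached to the arrangement of hyperplanes underlying the master function, a form that localizes at the critical points and is by construction block-diagonal over the $\Sigma_{\mathbf{m}}$-orbits, so that distinct orbits are orthogonal unconditionally, for all parameter values with isolated critical points. If you want a self-contained proof along your lines, you would need to either restrict to $\mathfrak{gl}_N$ and import the Bethe-algebra separation results of \cite{MTV}, or switch to the localization strategy of \cite{V3}.
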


\subsection{Trigonometric Gaudin Model}
Let $V = V_1 \otimes \dots \otimes V_n$, where $V_p$ are irreducible highest weight
$\g$-modules of highest weight $\La_p$ with highest weight vectors $v_p$.
Set $\mathbf{\La}  = (\La_1,\dots, \La_n)$ and $\La = \sum_p \La_p$,
$\mathbf{m} = (m_1,\dots, m_r)$, $m = \sum_j m_j$ and
$\mathbf{m}_\al = \sum_{j=1}^r m_j \al_j$ as above.
Then
the Bethe ansatz provides simultaneous eigenvectors to
the operators $\mc{K}_p(z_1, \dots, z_n, \xi)$ in $V[\La - \mathbf{m}_\al]$.

\subsubsection{Master function}
In this case we write the master function
\begin{equation}
\label{eq:trigmaster}
\Phi_{\mc{K}}(t,z,\mathbf{\La}
, \mu) = \Phi_K(t,z, \mathbf{\La})
- \sum_{(j,k)} (\al_j, \mu) \log(t^{(j)}_k)
\end{equation}
where $\Phi_K$ is given by equation (\ref{eq:master}).
The function $\Phi_{\mc{K}}$ has critical points determined by the equations
\[
\sum_{(j',k') \neq (j,k)} \frac{(\al_j, \al_{j'})}{t^{(j)}_k - t^{(j')}_{k'}}
-\sum_{p=1}^n \frac{(\al_j, \La_p)}{t^{(j)}_k - z_p}
- \frac{(\al_j,\mu)}{t^{(j)}_k} = 0 \qquad 1 \leq j \leq r, \
1 \leq k \leq m_j.
\]

\subsubsection{Eigenvectors}

\begin{thm}
\label{thm:trigbethe}
Let $t_{cr}$ be an isolated critical point of $\Phi_{\mc{K}}(\, \cdot \, ,z, \mathbf{\La}
, \xi - \rho - \frac{1}{2}(\La - \mathbf{m}_\al))$.
Then
$u(t_{cr},z) \in V[\La - \mathbf{m}_\al]$ is an eigenvector of the
trigonometric Gaudin operator $\mc{K}_p(z_1, \dots, z_n, \xi)$ with eigenvalue
equal to
\[
z_p \frac{\partial}{\partial z_p}
\Phi_{\mc{K}}(t_{cr}, z, \bs{\La}, \xi - \rho - \frac{1}{2}(\La - \mathbf{m}_\al)) +
\frac{1}{2}(\La_p, \La_p + 2\rho)
\]
for $p= 1, \dots, n$.
\end{thm}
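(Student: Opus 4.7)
The plan is to reduce the trigonometric statement to the rational Bethe ansatz Theorem \ref{thm:ratbethe} applied to the auxiliary tensor product $M_\mu \otimes V$, where $\mu = \xi - \rho - \frac{1}{2}(\La - \mathbf{m}_\al)$ and the Verma module $M_\mu$ is placed at the additional point $z_0 = 0$; the trigonometric Bethe vector on $V$ will then appear as the leading $\bs 1_\mu \otimes (\,\cdot\,)$ component of the rational Bethe vector, and the conclusion follows from Corollary \ref{cor:singularandxi}.

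First, I would compare the master functions. Setting $\La_0 = \mu$ and $z_0 = 0$ in \Ref{eq:master} gives
\[
\Phi_K(t, (0, z_1, \dots, z_n), (\mu, \La_1, \dots, \La_n)) = \Phi_K(t, z, \mathbf{\La}) - \sum_{(j,k)} (\al_j, \mu) \log t^{(j)}_k,
\]
which is exactly $\Phi_{\mc K}(t, z, \mathbf{\La}, \mu)$ from \Ref{eq:trigmaster}. Consequently any isolated critical point $t_{cr}$ of $\Phi_{\mc K}(\,\cdot\,, z, \mathbf{\La}, \mu)$ is an isolated critical point of the rational master function on $M_\mu \otimes V$, and the $z_p$-derivatives of the two functions agree at $t_{cr}$. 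By Theorem \ref{thm:ratbethe} the rational Bethe vector $\tilde u(t_{cr}, (0, z))$ lies in $\sing(M_\mu \otimes V)[\mu + \La - \mathbf{m}_\al]$ and is a joint eigenvector of each $K_p(0, z_1, \dots, z_n)$ with eigenvalue $\eps_p = \partial_{z_p} \Phi_{\mc K}(t_{cr}, z, \mathbf{\La}, \mu)$.

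Next, I split the defining sum \Ref{eq:ratomega} for $\tilde u$ according to the value of $b_0$ in the partition $\bs b = (b_0, b_1, \dots, b_n)$. For the terms with $b_0 = 0$, the first tensor factor is $\bs 1_\mu$ (empty product of $f$'s), and the auxiliary weight factor $u^\sigma_{\bs b, 0}(0)$ from \Ref{eq:omegafunction} is the empty product $1$; these terms collect to $\bs 1_\mu \otimes u(t_{cr}, z)$, where $u(t_{cr}, z)$ is precisely the weight function attached to $V$ alone. The remaining terms with $b_0 > 0$ contribute $F_j \bs 1_\mu \otimes u_j(t_{cr}, z)$ with $F_j \in U(\n_-)$ of positive degree. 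Hence $\tilde u(t_{cr}, (0, z))$ has exactly the singular-vector expansion required by Corollary \ref{cor:singularandxi}, and Proposition \ref{prop:singonlyif} automatically places the leading term in $V[\La - \mathbf{m}_\al]_\mu$.

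Finally, I would apply Corollary \ref{cor:singularandxi} with $\nu = \La - \mathbf{m}_\al$ and $\mu = \xi - \rho - \frac{1}{2}\nu$, so that $\mu + \rho + \frac{1}{2}\nu = \xi$. The corollary then delivers that $u(t_{cr}, z)$ is an eigenvector of $\mc K_p(z_1, \dots, z_n, \xi)$ with eigenvalue $z_p \eps_p + \frac{1}{2}(\La_p, \La_p + 2\rho)$, which is the formula claimed in the theorem. The only delicate point is the identification of the $b_0 = 0$ component of $\tilde u$ with the trigonometric weight function $u(t_{cr}, z)$ on $V$; this amounts to unwinding the product structure in \Ref{eq:omegafunction}-\Ref{eq:ratomega} and observing that the $p = 0$ factor trivializes, a routine bookkeeping step. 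Everything else is furnished by the results of Sections 3 and 4.
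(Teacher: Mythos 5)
Your proposal is correct and follows essentially the same route as the paper's proof: both reduce to the rational Bethe ansatz on $M_\mu\otimes V$ with the Verma module placed at $z_0=0$, identify the $b_0=0$ part of the rational weight function with $\bs 1_\mu\otimes u(t_{cr},z)$, and invoke Corollary \ref{cor:singularandxi} with $\mu=\xi-\rho-\frac12(\La-\mathbf m_\al)$ to extract the eigenvalue. Your explicit check that the extended rational master function coincides with $\Phi_{\mc K}$ is a detail the paper leaves implicit, but the argument is the same.
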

\begin{proof}
We relate the construction under question to the Bethe ansatz for
the rational Gaudin operators $K_0(0, z_1, \dots, z_n), \dots,
K_n(0, z_1, \dots, z_n)$ on the space
$\sing M_\mu \otimes V[\mu + \La - \mathbf{m}_\al]$.

We construct the weight function
$u_{\mc{K}}: \C^m \arr M_\mu \otimes V[\mu + \La - \mathbf{m}_\al]$.
Let $\mathbf{b}_{\mc{K}} = (b_0, b_1, \dots, b_n)$, where $\sum_{p=0}^n b_p = m$,
and $B_{\mc{K}}$ denote the set of these partitions. Let $\Sigma(\mathbf{b}_{\mc{K}})$
be the set of bijections $\sigma$ from $\{(p,s): 0 \leq p \leq n, \ 1\leq s \leq b_p \}$
to $\{t^{(1)}_1, \dots, t^{(1)}_{m_1}, \dots, t^{(1)}_1,\dots, t^{(r)}_{m_r} \}$.
Let $c(t^{(j)}_k) = j$, and $c_\sigma((p,s)) = c(\sigma((p,s)))$.

For each $\mathbf{b}_{\mc{K}} \in B_{\mc{K}}$ and $\sigma \in \Sigma(\mathbf{b}_{\mc{K}})$, we assign the vector
\[
f^\sigma_{\mathbf{b}_{\mc{K}}} v_{\mc{K}} = f_{c_\sigma((0,1))} \cdots f_{c_\sigma((0,b_0))} \bs{1}_\mu \otimes \cdots
\otimes f_{c_\sigma((n,1))} \cdots f_{c_\sigma((n,b_n))} v_n.
\]
and the rational function
\[
u^\sigma_{\mathbf{b}_{\mc{K}}} = u^\sigma_{\mathbf{b}_{\mc{K}}, 0}(0) u^\sigma_{\mathbf{b}_{\mc{K}}, 1}(z_1)
\dots u^\sigma_{\mathbf{b}_{\mc{K}}, n}(z_n),
\]
where $u^\sigma_{\mathbf{b}_{\mc{K}}, p}(x)$ is as in equation (\ref{eq:omegafunction}).
Then
\[
u_{\mc{K}}(t,z) = \sum_{b_{\mc{K}} \in B_{\mc{K}}} \sum_{\sigma \in \Sigma(\mathbf{b}_{\mc{K}})}
u^\sigma_{\mathbf{b}_{\mc{K}}} f^\sigma_{\mathbf{b}_{\mc{K}}} v_{\mc{K}}.
\]

By Theorem \ref{thm:ratbethe}, if $t_{cr}$ is a critical point of $\Phi_{\mc{K}}(\, \cdot \,, z,
\mathbf{\La} , \mu)$, then $u_{\mc{K}}(t_{cr},z)$ belongs to
$\sing M_\mu \otimes V[\mu + \La - \mathbf{m}_\al]$ and is an eigenvector to
$K_0(0, z_1, \dots, z_n)$ with eigenvalue $\sum_{(j,k)} \frac{(\al_j, \mu)}{t^{(j)}_k}$
and to $K_p(0,z_1, \dots, z_n)$ 
for $p=1, \dots, n$ with eigenvalue $\frac{\partial}{\partial z_p} \Phi_{\mc{K}}(t_{cr}, z, \bs{\La}, \mu)$.

The singular vector has the form
\[
u_{\mc{K}}(t_{cr},z) = \bs{1}_\mu \otimes u(t_{cr}, z) + \sum_{j>0} F_j \bs{1}_\mu \otimes u_j
\]
for $u(t_{cr},z)$ as defined in equation (\ref{eq:ratomega}), since
$\bs{1}_\mu \otimes u(t_{cr}, z)$ is the sum of the terms of $u_{\mc{K}}(t_{cr},z)$ where $b_0 = 0$.
By Corollary \ref{cor:singularandxi}, $u(t_{cr}, z)$ is an eigenfunction
of the operators
$\mc{K}_p\left(z_1, \dots, z_n, \mu + \rho + \frac{1}{2}(\La - \mathbf{m}_\al) \right)$
for $p = 1, \dots, n$ with eigenvalue $z_p \frac{\partial}{\partial z_p}
\Phi_{\mc{K}}(t_{cr}, z, \bs{\La}, \mu) + \frac{1}{2}(\La_p, \La_p + 2\rho)$.
We let $\mu = \xi - \rho - \frac{1}{2}(\La - \mathbf{m}_\al)$ for the theorem.
\end{proof}

\begin{prop}
\label{prop:trignonzero}
Let $\xi - \rho - \frac{1}{2}(\La - \mathbf{m}_\al)$ satisfy (\ref{eqn:shapgeneric}).
For $t_{cr}$ an isolated critical point of $\Phi_{\mc{K}}(\, \cdot \, ,z, \mathbf{\La},
\xi - \rho - \frac{1}{2}(\La - \mathbf{m}_\al))$, the vector
$u(t_{cr}, z)$ is nonzero.
\end{prop}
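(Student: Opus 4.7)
The plan is to reduce to the rational Gaudin setting on the extended tensor product $M_\mu \otimes V$, where $\mu = \xi - \rho - \frac{1}{2}(\La - \mathbf{m}_\al)$, and then exploit the hypothesis that $S_\mu$ is non-degenerate. First I would observe that formula (\ref{eq:trigmaster}) gives the identity
\[
\Phi_{\mc{K}}(t, z, \bs{\La}, \mu) = \Phi_K\bigl(t,\,(0, z_1, \dots, z_n),\,(\mu, \La_1, \dots, \La_n)\bigr),
\]
so any isolated critical point $t_{cr}$ of $\Phi_{\mc{K}}(\,\cdot\,, z, \bs{\La}, \mu)$ is also an isolated critical point of the rational master function for the extended system obtained by adjoining the evaluation point $z_0 = 0$ with weight $\La_0 = \mu$. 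Theorem \ref{thm nonzero}, applied to this extended rational setup, then guarantees that the associated rational Bethe vector $u_{\mc{K}}(t_{cr}, z) \in \sing M_\mu \otimes V[\mu + \La - \mathbf{m}_\al]$ is nonzero.

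Next I would invoke the decomposition
\[
u_{\mc{K}}(t_{cr}, z) \;=\; \bs{1}_\mu \otimes u(t_{cr}, z) \,+\, \sum_{j>0} F_j \bs{1}_\mu \otimes u_j
\]
recorded in the proof of Theorem \ref{thm:trigbethe}, in which the first summand collects precisely the terms with $b_0 = 0$. Arguing by contradiction, suppose $u(t_{cr}, z) = 0$. Then $u_{\mc{K}}(t_{cr}, z)$ is a singular vector with vanishing leading coefficient. Applying Proposition \ref{prop:singdifference} with $u = 0$, and noting that $\Xi(\mu)(\bs{1}_\mu \otimes 0) = 0$, I would conclude that $u_{\mc{K}}(t_{cr}, z) \in {\rm Ker}(S_\mu) \otimes V$. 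But the hypothesis that $\mu$ satisfies (\ref{eqn:shapgeneric}) says exactly that $S_\mu$ is non-degenerate, so ${\rm Ker}(S_\mu) = 0$, forcing $u_{\mc{K}}(t_{cr}, z) = 0$ and contradicting the previous step.

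The main obstacle, such as it is, is the legitimacy of applying Theorem \ref{thm nonzero} to a tensor product one of whose factors is a Verma module $M_\mu$ rather than an irreducible highest weight module. The underlying construction in \cite{V4} only uses the weight function machinery on $U(\n_-)$ applied to highest weight vectors, which makes sense for $M_\mu$ just as well, and the same implicit extension is already used when the proof of Theorem \ref{thm:trigbethe} invokes Theorem \ref{thm:ratbethe} on $M_\mu \otimes V$. Modulo this essentially notational verification, the argument is complete in three steps: identify the two master functions, invoke the rational non-vanishing result, and use non-degeneracy of $S_\mu$ to promote non-vanishing of $u_{\mc{K}}(t_{cr}, z)$ to non-vanishing of its leading term $u(t_{cr}, z)$.
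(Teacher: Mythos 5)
Your proposal is correct and follows essentially the same route as the paper: apply Theorem \ref{thm nonzero} to the extended rational system on $M_\mu \otimes V$ to get $u_{\mc{K}}(t_{cr},z) \neq 0$, then use non-degeneracy of $S_\mu$ under (\ref{eqn:shapgeneric}) to conclude the leading term $u(t_{cr},z)$ cannot vanish. The paper phrases the second step as the identity $u_{\mc{K}}(t_{cr},z) = \Xi(\mu)(\bs{1}_\mu \otimes u(t_{cr},z))$ rather than as a contradiction via Proposition \ref{prop:singdifference}, but this is the same argument, and your remark about extending Theorem \ref{thm nonzero} to a factor $M_\mu$ matches an implicit step the paper also takes without comment.
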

\begin{proof}
By Theorem \ref{thm nonzero}, $u_{\mc{K}}(t_{cr}, z)$ is nonzero.
For $\mu$ satisfying (\ref{eqn:shapgeneric}), the singular vector $u_{\mc{K}}(t_{cr},z)$
equals $\Xi(\mu)(\bs{1}_\mu \otimes u(t_{cr}, z))$, so $u(t_{cr}, z)$
is nonzero.
\end{proof}

\subsubsection{Norms of eigenvectors}
For $t_{cr}$ a critical point of $\Phi_{\mc{K}}(\, \cdot \,, z, \mathbf{\La}
, \xi - \rho - \frac{1}{2}(\La - \mathbf{m}_\al))$, let ${\rm Hess}_t\,\Phi_{\mc{K}}(t_{cr})$ denote
the Hessian of $\Phi_{\mc{K}}$ with respect to the $t$ variables.

\begin{thm}
\label{thm:trigbetheprod}
For
$t_{cr}$ an isolated critical
point of $\Phi_{\mc{K}}(\, \cdot \, , z, \mathbf{\La},
, \xi - \rho - \frac{1}{2}(\La - \mathbf{m}_\al))$,
\[
\left\langle u(t_{cr},z), u(t_{cr},z) \right\rangle_\xi
= {\rm Hess}_t\,\Phi_{\mc{K}}(t_{cr} ,z, \mathbf{\La},
\xi - \rho - \frac{1}{2}(\La - \mathbf{m}_\al)).
\]
\end{thm}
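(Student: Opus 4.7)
The plan is to reduce the trigonometric norm identity to the rational norm identity of Theorem~\ref{thm:ratbethenorm} by regarding the trigonometric Bethe vector as the leading term of a rational Bethe vector on an enlarged tensor product with one extra marked point at the origin.

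Set $\nu = \La - \bs{m}_\al$ and $\mu = \xi - \rho - \tfrac12\nu$, so that $\xi = \mu + \rho + \tfrac12\nu$. Inspection of (\ref{eq:master}) and (\ref{eq:trigmaster}) shows that
\[
\Phi_{\mc K}(t,z,\bs\La,\mu) = \Phi_K(t,(0,z_1,\dots,z_n),(\mu,\La_1,\dots,\La_n)),
\]
that is, the trigonometric master function with twisting weight $\mu$ coincides with the rational master function on $M_\mu \otimes V_1 \otimes \cdots \otimes V_n$ with extra marked point $z_0 = 0$ and extra highest weight $\La_0 = \mu$. Since none of the new variables are the $t^{(j)}_k$, the two Hessians with respect to $t$ agree:
\[
\on{Hess}_t \Phi_{\mc K}(t_{cr},z,\bs\La,\mu) = \on{Hess}_t \Phi_K(t_{cr},(0,z),(\mu,\bs\La)).
\]
Moreover, the weight function $u_{\mc K}(t,z)$ constructed in the proof of Theorem~\ref{thm:trigbethe} is exactly the rational weight function (\ref{eq:ratomega}) for this enlarged problem.

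First I would assume $\mu$ satisfies the Shapovalov genericity (\ref{eqn:shapgeneric}). Applying Theorem~\ref{thm:ratbethenorm} to the enlarged rational problem yields
\[
S\bigl(u_{\mc K}(t_{cr},z),\, u_{\mc K}(t_{cr},z)\bigr) = \on{Hess}_t \Phi_K(t_{cr},(0,z),(\mu,\bs\La)) = \on{Hess}_t \Phi_{\mc K}(t_{cr},z,\bs\La,\mu),
\]
where the Shapovalov form on the left is the tensor form on $M_\mu \otimes V$. In the proof of Theorem~\ref{thm:trigbethe} it was shown that
\[
u_{\mc K}(t_{cr},z) = \bs{1}_\mu \otimes u(t_{cr},z) + \sum_{j>0} F_j \bs{1}_\mu \otimes u_j
\]
is singular of weight $\mu + \nu$ and that, by Proposition~\ref{prop:singonlyif}, $u(t_{cr},z) \in V[\nu]_\mu$, which makes $\langle u(t_{cr},z), u(t_{cr},z)\rangle_\xi$ well defined. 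Corollary~\ref{cor:singtrigprod} then identifies
\[
S\bigl(u_{\mc K}(t_{cr},z),\, u_{\mc K}(t_{cr},z)\bigr) = \langle u(t_{cr},z), u(t_{cr},z)\rangle_{\mu + \rho + \frac12\nu} = \langle u(t_{cr},z), u(t_{cr},z)\rangle_\xi,
\]
and combining the two displayed equalities gives the theorem for generic $\mu$.

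The final step is to remove the genericity hypothesis on $\mu$. Both sides of the claimed identity are rational (in fact analytic near $t_{cr}$) functions of $\xi$: the right-hand side is manifestly polynomial in $\mu$, and the left-hand side is regular by Lemma~\ref{lem:inverseshap} once one uses the expression of $\langle\, ,\,\rangle_\xi$ via $\mc Q(\xi)$ and the fact that $u(t_{cr},z) \in V[\nu]_\mu$. So the identity proved for generic $\xi$ extends by continuity to every $\xi$ at which both sides are defined. The main obstacle I anticipate is precisely this last bookkeeping step, namely checking that the combinatorial weight function $u_{\mc K}$ really produces the term $\bs{1}_\mu \otimes u(t_{cr},z)$ as its leading part (so that Corollary~\ref{cor:singtrigprod} applies with the correct vector) and that the Hessian transfers cleanly through the change of master function; all of this is a careful accounting with the extra index $p=0$.
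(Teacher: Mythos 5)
Your proposal is correct and follows essentially the same route as the paper: apply Theorem~\ref{thm:ratbethenorm} to the rational Gaudin problem on $M_\mu\otimes V$ with the extra point $z_0=0$ (so that the Hessian equals $S(u_{\mc K}(t_{cr},z),u_{\mc K}(t_{cr},z))$), then invoke Corollary~\ref{cor:singtrigprod} to identify that Shapovalov norm with $\langle u(t_{cr},z),u(t_{cr},z)\rangle_\xi$. The genericity assumption on $\mu$ and the subsequent continuity argument you add are not needed, since both Theorem~\ref{thm:ratbethenorm} and Corollary~\ref{cor:singtrigprod} hold for arbitrary $\mu$.
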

\begin{proof}
Let $\mu = \xi - \rho - \frac{1}{2}(\La - \mathbf{m}_\al)$.
By Theorem \ref{thm:ratbethenorm}, we have that
\[
{\rm Hess}_t\,\Phi_{\mc{K}}(t_{cr} ,z, \mathbf{\La},\mu) =
S(u_{\mc{K}}(t_{cr},z), u_{\mc{K}}(t_{cr},z))
\]
for $S$ the tensor Shapovalov form on $M_\mu \otimes V$.
By Corollary \ref{cor:singtrigprod},
\[
\left\langle u(t_{cr},z), u(t_{cr},z) \right\rangle_\xi =
S(u_{\mc{K}}(t_{cr},z), u_{\mc{K}}(t_{cr},z)).
\]
\end{proof}

\begin{thm}
\label{thm:trigortho}
Let $t_{cr}$, $t'_{cr}$ be isolated critical
points of $\Phi_{\mc{K}}(\, \cdot \, , z, \mathbf{\La},
\xi - \rho - \frac{1}{2}(\La - \mathbf{m}_\al))$
lying in different orbits of $\Sigma_{\mathbf{m}}$.
Then
\[
\left\langle u(t_{cr},z), u(t'_{cr},z) \right\rangle_\xi = 0
\]
\end{thm}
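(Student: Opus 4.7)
The plan is to reduce this orthogonality statement to the rational orthogonality theorem (Theorem \ref{thm:ratortho}) via the singular-vector lift already used in the proofs of Theorems \ref{thm:trigbethe} and \ref{thm:trigbetheprod}. Set $\mu = \xi - \rho - \frac{1}{2}(\La - \mathbf{m}_\al)$. Observe that $\Phi_\mc{K}(t, z, \bs{\La}, \mu)$ coincides with the rational master function $\Phi_K(t, (0, z_1, \dots, z_n), (\mu, \La_1, \dots, \La_n))$ associated to the enlarged $(n+1)$-pointed configuration obtained by inserting a marked point at $z_0 = 0$ with weight $\mu$. Consequently $t_{cr}$ and $t'_{cr}$ are isolated critical points of this rational master function, and since $\Sigma_\mathbf{m}$ acts only on the $t$-variables (color-preservingly), the two $\Sigma_\mathbf{m}$-orbits are still distinct in the rational picture.

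Next I would apply the rational Bethe ansatz (Theorem \ref{thm:ratbethe}) to this enlarged configuration to obtain the singular vectors $u_\mc{K}(t_{cr}, z)$ and $u_\mc{K}(t'_{cr}, z)$ in $\sing\>(M_\mu \otimes V)[\mu + \La - \mathbf{m}_\al]$. As noted in the proof of Theorem \ref{thm:trigbethe}, each of these decomposes as
\[
u_\mc{K}(t_{cr}, z) = \bs{1}_\mu \otimes u(t_{cr}, z) + \sum_{j>0} F_j \bs{1}_\mu \otimes u_j,
\]
with leading term $u(t_{cr}, z)$ (and similarly for $t'_{cr}$), since the terms with $b_0 = 0$ produce exactly $\bs{1}_\mu \otimes u(t_{cr}, z)$.

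By Theorem \ref{thm:ratortho}, applied to the tensor Shapovalov form on $M_\mu \otimes V$, we have
\[
S(u_\mc{K}(t_{cr}, z), u_\mc{K}(t'_{cr}, z)) = 0.
\]
Corollary \ref{cor:singtrigprod} then identifies the Shapovalov pairing of the two singular vectors with $\langle u(t_{cr}, z), u(t'_{cr}, z) \rangle_\xi$, yielding the desired vanishing.

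There is really no hard step here: the proof is a direct transcription of the argument used for Theorem \ref{thm:trigbetheprod}, only with Theorem \ref{thm:ratbethenorm} replaced by Theorem \ref{thm:ratortho}. The one point deserving a line of justification is the identification of the orbit structure under the extension to the $(n+1)$-pointed rational configuration, and this is immediate because $\Sigma_\mathbf{m}$ acts by the same recipe in both settings.
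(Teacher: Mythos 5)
Your proposal is correct and follows exactly the paper's argument: lift $u(t_{cr},z)$ and $u(t'_{cr},z)$ to the singular vectors $u_{\mc{K}}(t_{cr},z)$, $u_{\mc{K}}(t'_{cr},z)$ in $\sing M_\mu\otimes V$, apply Theorem \ref{thm:ratortho} to conclude their Shapovalov pairing vanishes, and use Corollary \ref{cor:singtrigprod} to identify that pairing with $\langle u(t_{cr},z), u(t'_{cr},z)\rangle_\xi$. The extra remarks on the leading term and the persistence of distinct $\Sigma_{\mathbf{m}}$-orbits are correct details the paper leaves implicit.
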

\begin{proof}
By Corollary \ref{cor:singtrigprod}
\[
\left\langle u(t_{cr},z), u(t'_{cr},z) \right\rangle_\xi =
S(u_{\mc{K}}(t_{cr},z), u_{\mc{K}}(t'_{cr},z)),
\]
and by Theorem \ref{thm:ratortho} this is zero.
\end{proof}

\subsection{Trigonometric KZB operators}
Let $V = V_1 \otimes \dots \otimes V_n$, where $V_p$ are irreducible highest weight
$\g$-modules of highest weight $\La_p$ such that
$V$ has a non-trivial zero weight subspace $V[0]$. Let $v_p$ denote the highest weight vector
of $V_p$. Set $\mathbf{\La}  = (\La_1,\dots, \La_n)$ and $\La = \sum_p \La_p$.
For $\xi\in \h^*$,
the Bethe ansatz provides
simultaneous eigenvectors to the operators $H_p(z_1, \dots, z_n)$ in $E(\xi)$.
In this case, $\bs{m} = (m_1, \dots, m_r)$ is determined by $\bs{m}_{\al} = \La$, since
$E(\xi)$ has values in $V[0]$.

\subsubsection{Master function}
Let
$Z_s = e^{-2\pi i z_s}$
and $Z = (Z_1, \dots, Z_n)$.
We write the master function
\[
\Phi_H(t,z,\bs{\La},\mu)
= \Phi_{\mc{K}}(t, Z, \bs{\La}, \mu)
\]
with $\Phi_{\mc{K}}$ as in (\ref{eq:trigmaster}). Critical points of $\Phi_H$ with respect to
$t$ are determined by the equations
\[
\sum_{(j',k') \neq (j,k)} \frac{(\al_j, \al_{j'})}{t^{(j)}_k - t^{(j')}_{k'}}
-\sum_{p=1}^n \frac{(\al_j, \La_p)}{t^{(j)}_k - Z_p}
- \frac{(\al_j,\mu)}{t^{(j)}_k} = 0 \qquad 1 \leq j \leq r, \
1 \leq k \leq m_j.
\]

\subsubsection{Eigenfunctions}
\begin{thm}
Let $t_{cr}$ be an isolated critical point of $\Phi_H(\, \cdot \, , z, \bs{\La}, \xi - \rho)$
Then for $u(t_{cr},Z) \in V[0]$ given by (\ref{eq:ratomega}),
$\psi^\xi_{u(t_{cr},Z)}(\la)$ 
is a eigenfunction of $H_0$ with eigenvalue $\pi i(\xi,\xi)$
and of $H_p(z, \la)$ for $p =1, \dots, n$ with eigenvalue
$- \frac{1}{2\pi i} \frac{\partial}{\partial z_p} \Phi_H(t_{cr}, z, \bs{\La}, \xi - \rho)
+ (\La_p, \La_p + 2\rho)$.
\end{thm}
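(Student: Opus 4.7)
The plan is to assemble three earlier results: Proposition \ref{prop:eigenfunct}, which identifies $E(\xi)$ (the image of $P^\xi$) with the $H_0$-eigenspace of eigenvalue $\pi i(\xi,\xi)$; the corollary following Lemma \ref{lem:trigeigenfunct}, giving the intertwining $H_p(z)\,\psi^\xi_u = \psi^\xi_{\mc{K}_p(Z,\xi)u}$; and the trigonometric Bethe ansatz Theorem \ref{thm:trigbethe}, which produces the $\mc{K}_p$-eigenvalues from critical values of $\Phi_\mc{K}$.

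The first step is to verify that $u(t_{cr},Z)$ lies in $V[0]_{\xi-\rho}$, which is what is needed for $\psi^\xi_{u(t_{cr},Z)} := P^\xi(u(t_{cr},Z))$ to be defined. In the KZB setting $\bs{m}_\al = \La$, so setting $\mu = \xi - \rho$ (and noting that $\xi - \rho - \frac{1}{2}(\La - \bs{m}_\al) = \xi - \rho$ here), the construction inside the proof of Theorem \ref{thm:trigbethe} produces a singular vector of shape $\bs{1}_\mu \otimes u(t_{cr}, Z) + \sum_{j>0} F_j \bs{1}_\mu \otimes u_j$ in $\sing M_\mu \otimes V[\mu]$, and Proposition \ref{prop:singonlyif} then forces the leading term $u(t_{cr}, Z)$ to lie in $V[0]_{\xi - \rho}$. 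Proposition \ref{prop:eigenfunct} gives $\psi^\xi_{u(t_{cr},Z)} \in E(\xi)$, and by the very definition of $E(\xi)$ this function is an eigenfunction of $H_0$ with eigenvalue $\pi i(\xi,\xi)$.

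For the $H_p(z)$-eigenvalue, I would apply the corollary after Lemma \ref{lem:trigeigenfunct} to obtain
\[
H_p(z)\,\psi^\xi_{u(t_{cr},Z)} \;=\; \psi^\xi_{\mc{K}_p(Z,\xi)\,u(t_{cr},Z)} .
\]
Theorem \ref{thm:trigbethe}, applied with the trigonometric Gaudin variable renamed from $z$ to $Z$ and with $\mu = \xi - \rho$, then tells me that $u(t_{cr}, Z)$ is an eigenvector of $\mc{K}_p(Z, \xi)$ with eigenvalue $Z_p \tfrac{\partial}{\partial Z_p}\Phi_\mc{K}(t_{cr}, Z, \bs{\La}, \xi - \rho) + \tfrac{1}{2}(\La_p, \La_p + 2\rho)$. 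Converting the $Z$-derivative to a $z$-derivative via $Z_s = e^{-2\pi i z_s}$ and the definitional identity $\Phi_H(t, z, \bs{\La}, \mu) = \Phi_\mc{K}(t, Z, \bs{\La}, \mu)$, the chain rule gives $Z_p \tfrac{\partial}{\partial Z_p}\Phi_\mc{K} = -\tfrac{1}{2\pi i}\tfrac{\partial}{\partial z_p}\Phi_H$, which converts the eigenvalue into the stated form.

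There is no real conceptual obstacle; the entire argument is a matter of composing the three previously established results with a change-of-variables computation. The only thing requiring a brief check is that $\mu = \xi - \rho$ is the parameter selected by $P^\xi$, which uses the KZB assumption that $V[0]$ is the relevant weight subspace (so $\bs{m}_\al = \La$ and the half-shift of weights appearing in Theorem \ref{thm:trigbethe} vanishes).
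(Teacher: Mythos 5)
Your proposal is correct and follows essentially the same route as the paper: cite Theorem \ref{thm:trigbethe} (in the $Z$ variables, with $\bs m_\al=\La$ so the half-shift vanishes and $\mu=\xi-\rho$) for the $\mc K_p(Z,\xi)$-eigenvalue of $u(t_{cr},Z)$, then transfer it to $H_p(z)$ via Lemma \ref{lem:trigeigenfunct} and the chain rule $Z_p\partial_{Z_p}=-\tfrac{1}{2\pi i}\partial_{z_p}$; the extra details you supply (membership of $u(t_{cr},Z)$ in $V[0]_{\xi-\rho}$ and the $H_0$ eigenvalue from the definition of $E(\xi)$) are left implicit in the paper. Note only that your derivation correctly yields the constant $\tfrac12(\La_p,\La_p+2\rho)$ from Theorem \ref{thm:trigbethe}, whereas the theorem as stated (and the paper's own proof) writes $(\La_p,\La_p+2\rho)$ without the $\tfrac12$ --- an inconsistency in the paper, not in your argument.
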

\begin{proof}
By Theorem \ref{thm:trigbethe}, $u(t_{cr},Z) \in V[0]$ is eigenvector of the
trigonometric Gaudin operators $\mc{K}_p(Z_1, \dots, Z_n, \xi)$ for $p=1,\dots, n$ with
eigenvalue $Z_p \frac{\partial}{\partial Z_p} \Phi_{\mc{K}}(t_{cr}, Z, \bs{\La}, \xi - \rho) +
(\La_p, \La_p + 2\rho)$.
Lemma \ref{lem:trigeigenfunct} implies that $\psi^\xi_{u(t_{cr},Z)}$ is a
eigenfunction of $H_p(z)$ for $p=1,\dots, n$ with the same eigenvalue.
\end{proof}

\begin{prop}
For $\xi - \rho \in \h^*$ satisfying (\ref{eqn:shapgeneric}), and
$t_{cr}$ an isolated critical point of $\Phi_H(\, \cdot \, , z, \bs{\La}, \xi - \rho)$,
the function $\psi^\xi_{u(t_{cr},Z)}(\la)$ is nonzero.
\end{prop}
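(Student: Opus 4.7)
The plan is to reduce the nonvanishing of the function $\psi^\xi_{u(t_{cr},Z)}(\la)$ to the nonvanishing of its leading term at $X=0$, and then identify this leading term with $u(t_{cr},Z)\in V[0]$, which is already known to be nonzero from the trigonometric Gaudin picture.

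First, I would observe that since $V[0]$ is the zero weight subspace, the multi-index $\bs m$ in the Bethe ansatz satisfies $\bs m_\al=\La$, so $\La-\bs m_\al=0$ and the parameter $\mu$ of Proposition~\ref{prop:trignonzero} is exactly $\xi-\rho$. Under the standing assumption that $\xi-\rho$ satisfies \Ref{eqn:shapgeneric}, Proposition~\ref{prop:trignonzero} (applied to the critical point $t_{cr}$ of $\Phi_H(\,\cdot\,,z,\bs\La,\xi-\rho)=\Phi_{\mc K}(\,\cdot\,,Z,\bs\La,\xi-\rho)$) therefore guarantees that the vector $u(t_{cr},Z)\in V[0]$ is nonzero.

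Next, because $\xi-\rho$ satisfies \Ref{eqn:shapgeneric}, the Shapovalov form on $M_{\xi-\rho}$ is nondegenerate. This forces $I_{\xi-\rho}=\{0\}$ in $U(\n_+)$, and hence $V[0]_{\xi-\rho}=V[0]$, so $u(t_{cr},Z)$ belongs to the domain of $P^\xi$ and the eigenfunction $\psi^\xi_{u(t_{cr},Z)}$ is well-defined by the explicit formula
\[
\psi^\xi_{u(t_{cr},Z)}(\la)=e^{2\pi i\xi(\la)}\Bigl(u(t_{cr},Z)+\sum_{j,k>0}(S_{\xi-\rho}^{-1})_{jk}\>A_X(F_j)\>\omega(F_k)\>u(t_{cr},Z)\Bigr).
\]

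Finally, I would extract the leading term at $X=0$. The observation recorded in the paper before Lemma~\ref{lem:Aantipode}, namely that $A_X(F_{\beta_1}\cdots F_{\beta_m})$ vanishes at $X=0$ whenever $m>0$, shows that every summand in the tail with $j>0$ contributes $0$ to $\on{const}_{\{X_j\}}$. Therefore the leading term of $\psi^\xi_{u(t_{cr},Z)}$ equals $u(t_{cr},Z)$, which is nonzero by the first step. A function in $\mc A(\xi)\otimes V[0]$ whose leading term is nonzero cannot itself be the zero function, so $\psi^\xi_{u(t_{cr},Z)}\neq 0$.

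The proof is essentially a bookkeeping argument: the only nontrivial input is Proposition~\ref{prop:trignonzero}, and the main (minor) obstacle is simply matching the Bethe ansatz parameters---checking that the specialization $\bs m_\al=\La$ makes $\mu=\xi-\rho-\tfrac12(\La-\bs m_\al)$ coincide with $\xi-\rho$, so that the hypothesis \Ref{eqn:shapgeneric} on $\xi-\rho$ is exactly what is needed to invoke that earlier result.
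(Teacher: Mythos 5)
Your proof is correct and follows essentially the same route as the paper: the paper's own argument is a one\-line citation of Proposition~\ref{prop:trignonzero} (with the same parameter matching $\bs m_\al=\La$, hence $\mu=\xi-\rho$), leaving implicit the step that $u\neq 0$ implies $\psi^\xi_u\neq 0$. Your explicit extraction of the leading term at $X=0$ simply fills in that implicit step, so no substantive difference.
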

\begin{proof}
By Proposition \ref{prop:trignonzero}, $u(t_{cr},z)$ is nonzero,
so $\psi^\xi_{u(t_{cr},Z)}(\la)$ is nonzero.
\end{proof}

\subsubsection{Norms of eigenfunctions}
For $t_{cr}$ a critical point of $\Phi_H(\, \cdot \, , z, \bs{\La}, \xi-\rho)$,
let ${\rm Hess}_t\,\Phi_H(t_{cr})$ denote
the Hessian of $\Phi_H$ with respect to the $t$ variables.

\begin{thm}
\label{thm:kzbnorm}
Let  $t_{cr}$ be an isolated critical point of $\Phi_H(\, \cdot\, , z, \mathbf{\La},
\xi -\rho)$. 
Then
\[
\left\langle \psi^\xi_{u(t_{cr},Z)}, \psi^\xi_{u(t_{cr},Z)} \right\rangle
= 
{\rm Hess}_t \, \Phi_H(t_{cr} , z,  \mathbf{\La}, \xi -\rho).
\]
\end{thm}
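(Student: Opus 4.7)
The plan is to chain together three earlier results: Proposition \ref{prop:eigentrigprod}, which identifies the integral pairing $\langle \cdot, \cdot\rangle$ evaluated on KZB eigenfunctions with the trigonometric form $\langle \cdot, \cdot\rangle_\xi$ on $V[0]_{\xi-\rho}$; Theorem \ref{thm:trigbetheprod}, which computes the latter form on trigonometric Gaudin Bethe vectors as a Hessian of $\Phi_{\mc K}$; and the defining identity $\Phi_H(t, z, \bs\La, \mu) = \Phi_{\mc K}(t, Z, \bs\La, \mu)$, which passes the Hessian between the two master functions.

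First I would verify the hypothesis needed to invoke Proposition \ref{prop:eigentrigprod}, namely that $u(t_{cr}, Z)$ lies in $V[0]_{\xi - \rho}$. Since $E(\xi)$ takes values in $V[0]$, the combinatorial data forces $\bs m_\al = \La$, so $\xi - \rho - \tfrac{1}{2}(\La - \bs m_\al) = \xi - \rho$. From the construction in the proof of Theorem \ref{thm:trigbethe}, the vector $u(t_{cr}, Z) \in V[0]$ is the leading term in the expansion of the singular vector $u_{\mc K}(t_{cr}, Z) \in \sing M_{\xi - \rho} \otimes V[\xi - \rho]$, so by Proposition \ref{prop:singonlyif} it belongs to $V[0]_{\xi - \rho}$.

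With this established, Proposition \ref{prop:eigentrigprod} applied with $u = v = u(t_{cr}, Z)$ yields
\[
\bigl\langle \psi^\xi_{u(t_{cr}, Z)}, \psi^\xi_{u(t_{cr}, Z)} \bigr\rangle = \bigl\langle u(t_{cr}, Z), u(t_{cr}, Z) \bigr\rangle_\xi.
\]
Next, $t_{cr}$ is an isolated critical point of $\Phi_H(\, \cdot \,, z, \bs\La, \xi - \rho)$ iff it is an isolated critical point of $\Phi_{\mc K}(\, \cdot \,, Z, \bs\La, \xi - \rho)$, because the two master functions agree after the substitution $Z_s = e^{-2\pi i z_s}$ which acts only on the parameters $z$, not on $t$. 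So Theorem \ref{thm:trigbetheprod} applies and gives
\[
\bigl\langle u(t_{cr}, Z), u(t_{cr}, Z) \bigr\rangle_\xi = {\rm Hess}_t\, \Phi_{\mc K}(t_{cr}, Z, \bs\La, \xi - \rho).
\]
Finally, since the Hessian is taken with respect to the $t$ variables only, and $\Phi_H(t, z, \bs\La, \xi - \rho) = \Phi_{\mc K}(t, Z, \bs\La, \xi - \rho)$ identically in $t$, we have ${\rm Hess}_t\, \Phi_{\mc K}(t_{cr}, Z, \bs\La, \xi - \rho) = {\rm Hess}_t\, \Phi_H(t_{cr}, z, \bs\La, \xi - \rho)$, and the theorem follows.

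The argument is essentially bookkeeping once the transition machinery between the three models has been built. The only genuine content to check is the first step, confirming that the Bethe vector $u(t_{cr}, Z)$ satisfies the ideal annihilation condition placing it in $V[0]_{\xi - \rho}$; this is precisely where the singular vector interpretation of $u_{\mc K}(t_{cr}, Z)$ from the trigonometric Gaudin construction is used, and after that all three claimed equalities line up without needing any further genericity assumption on $\xi$.
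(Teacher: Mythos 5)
Your proof is correct and follows the same route as the paper: Proposition \ref{prop:eigentrigprod} reduces the pairing to $\langle\,,\,\rangle_\xi$, Theorem \ref{thm:trigbetheprod} evaluates it as the Hessian of $\Phi_{\mc K}$, and the substitution $Z_s = e^{-2\pi i z_s}$ identifies the two Hessians. Your extra check that $u(t_{cr},Z)$ lies in $V[0]_{\xi-\rho}$ (via Proposition \ref{prop:singonlyif} and the singular-vector construction) is a detail the paper leaves implicit, and it is handled correctly.
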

\begin{proof}
By Proposition \ref{prop:eigentrigprod},
\[
\left\langle \psi^\xi_{u(t_{cr},Z)}, \psi^\xi_{u(t_{cr},Z)} \right\rangle
= \left\langle u(t_{cr},Z), u(t_{cr},Z) \right\rangle_\xi.
\]
Since $t_{cr}$ is a isolated critical point of
$\Phi_{\mc{K}}(\, \cdot \, , Z, \bs{\La}, \xi - \rho)$,
by Theorem \ref{thm:trigbetheprod}, we have
\[
\left\langle u(t_{cr},Z), u(t_{cr},Z) \right\rangle_\xi =
{\rm Hess}_t \,\Phi_{\mc{K}}(t_{cr} ,Z, \mathbf{\La}, \xi - \rho).
\]
\end{proof}

\begin{thm}
Let
$t_{cr}$, $t'_{cr}$ be isolated critical points of
$\Phi_H(\, \cdot\, , z, \mathbf{\La}, \xi -\rho)$ 
lying in different $\Sigma_{\bs{\La}}$ orbits. Then
\[
\left\langle \psi^\xi_{u(t_{cr},Z)}, \psi^\xi_{u(t'_{cr},Z)} \right\rangle
= 0.
\]
\end{thm}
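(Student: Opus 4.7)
The plan is to reduce the trigonometric KZB orthogonality to the trigonometric Gaudin orthogonality, exactly parallel to how Theorem \ref{thm:kzbnorm} was derived from Theorem \ref{thm:trigbetheprod}. The two inputs I need are Proposition \ref{prop:eigentrigprod}, which identifies the scalar product of eigenfunctions with the bilinear form $\langle\,,\,\rangle_\xi$ on $V[0]_{\xi-\rho}$, and Theorem \ref{thm:trigortho}, which gives orthogonality of the trigonometric Gaudin Bethe vectors in different $\Sigma_{\bs m}$-orbits.

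First I would verify that both eigenvectors $u(t_{cr},Z)$ and $u(t'_{cr},Z)$ lie in the domain of the bilinear form $\langle\,,\,\rangle_\xi$, namely in $V[0]_{\xi-\rho}$. This is automatic: since $t_{cr}$ is a critical point of $\Phi_H(\cdot,z,\bs\La,\xi-\rho) = \Phi_{\mc K}(\cdot,Z,\bs\La,\xi-\rho)$, the construction underlying Theorem \ref{thm:trigbethe} realizes $u(t_{cr},Z)$ as the leading term of a singular vector in $\sing M_{\xi-\rho}\otimes V[\xi-\rho+\La-\bs m_\al]$, and here $\bs m_\al=\La$, so Proposition \ref{prop:singonlyif} places $u(t_{cr},Z)\in V[0]_{\xi-\rho}$; the same argument applies to $t'_{cr}$.

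Next, I apply Proposition \ref{prop:eigentrigprod} with $u=u(t_{cr},Z)$ and $v=u(t'_{cr},Z)$ to obtain
\[
\left\langle \psi^\xi_{u(t_{cr},Z)}, \psi^\xi_{u(t'_{cr},Z)} \right\rangle
= \left\langle u(t_{cr},Z), u(t'_{cr},Z) \right\rangle_\xi.
\]
Then, since $t_{cr}$ and $t'_{cr}$ are isolated critical points of $\Phi_{\mc K}(\cdot,Z,\bs\La,\xi-\rho)$ lying in different $\Sigma_{\bs m}$-orbits (the $\bs m_\al=\La$ determined by $V[0]$ makes the orbit condition identical to the $\Sigma_{\bs\La}$-orbit condition stated in the theorem), Theorem \ref{thm:trigortho} yields $\langle u(t_{cr},Z), u(t'_{cr},Z)\rangle_\xi = 0$. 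Combining gives the desired vanishing.

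There is no real obstacle here — the theorem is essentially a formal corollary of the machinery already assembled. The only point deserving care is the matching of orbit-conditions and the verification that $\xi-\rho$ (rather than the shifted weight $\xi-\rho-\tfrac12\nu$ appearing in Definition \ref{def:trigprod}) is the correct parameter: with $\nu=0$ on the zero weight subspace the shift is trivial, so Proposition \ref{prop:eigentrigprod} applies directly without genericity hypotheses on $\xi$ beyond those ensuring that the Bethe construction produces a well-defined element of $V[0]_{\xi-\rho}$.
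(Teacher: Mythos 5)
Your proposal is correct and follows exactly the paper's own two-step argument: Proposition \ref{prop:eigentrigprod} to pass from the KZB scalar product to $\langle\,,\,\rangle_\xi$ on $V[0]_{\xi-\rho}$, then Theorem \ref{thm:trigortho} for the vanishing. The extra checks you include (that the Bethe vectors lie in $V[0]_{\xi-\rho}$ and that the $\Sigma_{\bs\La}$ and $\Sigma_{\bs m}$ orbit conditions coincide since $\bs m_\al=\La$) are correct and left implicit in the paper.
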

\begin{proof}
By Proposition \ref{prop:eigentrigprod},
\[
\left\langle \psi^\xi_{u(t_{cr},Z)}, \psi^\xi_{u(t'_{cr},Z)} \right\rangle
= \left\langle u(t_{cr},Z), u(t'_{cr},Z) \right\rangle_\xi,
\]
and by Theorem \ref{thm:trigortho}, this is zero.
\end{proof}

\section{Weyl group}
\label{sect:weylgroup}
For $\al \in \Dl$, we have the reflection $s_\al$ of $\h$, defined by
$s_\al(\la) = \la - (\la, \al^{\vee})\al$. The Weyl group $W$
associated to $\g$ is the group of transformations of $\h$ generated by such $s_\al$.
The simple reflections $s_j = s_{\al_j}$ generate $W$.
The Weyl group acts on $V[0]$ so it acts on
$V[0]$-valued functions of $\h$ by $(w\psi)(\la) = w(\psi(w^{-1}\la))$.

\begin{lem}
\cite{FW} The operators $H_0(z,\tau)$, $H_1(z,\tau), \dots, H_n(z,\tau)$ are Weyl invariant.
\end{lem}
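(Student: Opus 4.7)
The plan is to reduce to the case of simple reflections and then verify the invariance term by term in the explicit formulas for $H_0(z,\tau)$ and $H_p(z,\tau)$. Since $W$ is generated by the simple reflections $s_j$, it suffices to check that $s_j^{-1} H_p(z,\tau) s_j = H_p(z,\tau)$ for $p = 0,1,\dots,n$ and for each $j$. Unwinding the definition $(w\psi)(\la) = w\psi(w^{-1}\la)$, this is the pointwise statement that for each $\la$, the operator on $V[0]$ obtained from $H_p(z,\tau)$ by conjugation by $w$ and substitution $\la \mapsto w\la$ agrees with $H_p(z,\tau)$ itself.

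First I would deal with the differential pieces. The Laplacian $\triangle = \sum_\nu \partial_{\la_\nu}^2$ and the first-order operator $\sum_\nu h_\nu^{(p)}\partial_{\la_\nu}$ are both coordinate-free expressions arising from the form $(\,,\,)$ on $\h$: the former is the Laplace--Beltrami operator of the invariant metric, and the latter is the contraction of the identity tensor in $\h\otimes\h$ against $h^{(p)}\otimes\partial_\la$. Both are therefore invariant under any orthogonal transformation of $\h$, in particular under $W$.

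Next I would handle the potential pieces. The coefficients of $\Omega_0^{(p,s)}$ (namely $\eta(z_p-z_s,\tau)$ and $\rho(z_p-z_s,\tau)$) do not depend on $\la$, while $\Omega_0 = \sum_\nu h_\nu \otimes h_\nu$ is $W$-invariant because it is the identity tensor for the restriction of $(\,,\,)$ to $\h$; hence these summands are invariant. For the $\al$-indexed pieces, the identities $\al(w^{-1}\la) = (w\al)(\la)$ and $w\Omega_\al w^{-1} = \Omega_{w\al}$ imply that under the substitution and conjugation the summand at $\al$ is replaced by the summand at $w^{-1}\al$; the sum over $\al\in\Delta$ is therefore unchanged by reindexing, and likewise for the symmetric combination $e_\al e_{-\al} + e_{-\al} e_\al$ appearing in the trigonometric limit of $H_0$.

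The main subtlety, and what I view as the only nontrivial point, is the verification of $w\Omega_\al w^{-1}=\Omega_{w\al}$. The action of $W$ on root vectors is only defined up to a lift to the extended Weyl group, so $\tilde w e_\al \tilde w^{-1} = c_\al e_{w\al}$ with a scalar $c_\al$ depending on the lift. However, invariance of $(\,,\,)$ gives $c_\al c_{-\al} (e_{w\al}, e_{-w\al}) = (e_\al, e_{-\al}) = 1$, so $c_\al c_{-\al} = 1$ and the signs cancel in the tensor product $\Omega_\al = e_\al \otimes e_{-\al}$. This makes the reindexing of the sums rigorous and completes the argument.
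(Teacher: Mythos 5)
The paper does not prove this lemma at all --- it is imported from \cite{FW} as a black box --- so there is no internal proof to compare against; your sketch has to be judged on its own merits, and it holds up. The overall strategy (reduce to simple reflections, observe that the differential pieces $\triangle$ and $\sum_\nu h_\nu^{(p)}\partial_{\la_\nu}$ are built orthogonal-invariantly from the form on $\h$, and reindex the root sums using $\al(w^{-1}\la)=(w\al)(\la)$ together with $w\,\Omega_\al\,w^{-1}=\Omega_{w\al}$) is the standard and correct one, and you correctly isolate the one genuinely delicate point: the adjoint action of a lift $\tilde w$ sends $e_\al$ to $c_\al e_{w\al}$ with a lift-dependent scalar, but invariance of $(\,,\,)$ forces $c_\al c_{-\al}=1$, so the ambiguity cancels in $\Omega_\al=e_\al\otimes e_{-\al}$. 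Note also that because you sum over all of $\Dl$ rather than $\Dl_+$, no parity identities for $\sigma_w(t,\tau)$ or $\varphi(w,t,\tau)$ are needed, which keeps the reindexing clean. Two small points worth making explicit if you write this up in full: first, the statement presupposes that the lifts $\tilde s_j$ actually induce a $W$-action (not just an action of the extended Weyl group) on the zero weight space $V[0]$, which is what makes ``conjugation by $w$'' meaningful; second, your aside about $e_\al e_{-\al}+e_{-\al}e_\al$ concerns the trigonometric limit, whereas the lemma as stated is about the elliptic operators $H_p(z,\tau)$ --- your main argument does cover the elliptic case, so this is only a matter of presentation.
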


\begin{cor}
Let $\psi$ be an eigenfunction of $H_p$ for $p$ any of $0, 1, \dots, n$.
Then for $w \in W$, $w \psi$ is an eigenfunction of $H_p$ with the same eigenvalue.
\end{cor}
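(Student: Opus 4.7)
The plan is to deduce this immediately from the preceding lemma. By Weyl invariance of the operators $H_p$, we have $w H_p w^{-1} = H_p$ as operators on $V[0]$-valued functions on $\h$, for every $w \in W$ and every $p \in \{0, 1, \dots, n\}$. Equivalently, $H_p$ commutes with the action of each $w \in W$.

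Given this, the argument is a one-line computation. Suppose $\psi$ satisfies $H_p \psi = \eps \psi$ for some eigenvalue $\eps$. Applying $w$ to both sides and using that $w$ commutes with $H_p$, I get
\[
H_p(w \psi) = w(H_p \psi) = w(\eps \psi) = \eps\, w\psi,
\]
so $w\psi$ is again an eigenfunction of $H_p$ with the same eigenvalue $\eps$. Since the trigonometric KZB operators are the $\tau \to i\infty$ limits of the operators $H_p(z,\tau)$, their Weyl invariance is inherited from the preceding lemma, and the same computation applies to them as well.

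There is no real obstacle here; the content lies entirely in the Weyl invariance lemma cited from \cite{FW}. The only thing to be careful about is that the Weyl action on functions is defined by $(w\psi)(\la) = w(\psi(w^{-1}\la))$ so that, in particular, the scalar eigenvalue $\eps$ commutes through $w$, which it does since $\eps \in \C$ acts by multiplication.
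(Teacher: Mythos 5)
Your proof is correct and is exactly the immediate deduction the paper intends: the corollary follows from the Weyl invariance lemma by the one-line computation $H_p(w\psi)=w(H_p\psi)=\eps\,w\psi$, and the paper itself omits any written proof precisely because this is the whole content. Your added remark that the trigonometric operators inherit Weyl invariance as $\tau\to i\infty$ limits is a reasonable (and slightly more careful) touch, but otherwise there is nothing to compare.
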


\subsection{Scattering matrices}

Following \cite{FV2}, we define the maps $T_w(\xi): V[0] \arr V[0]$, rational
in the variable $\xi \in \h^*$ and associated to $w \in W$. For $s_j$ a simple
reflection, we define
\begin{equation}
\label{eqn:dynamicalweylsum}
T_{s_j}(\xi) = \sum_{\ell=0}^\infty (-1)^{\ell}
\left(\frac{1}{\ell!}\right)^2 \frac{(\xi, \al_j^\vee)}{(\xi, \al_j^\vee) - \ell} f_j^\ell e_j^\ell.
\end{equation}
For $w \in W$ with decomposition
$w = s_{j_m} \cdots s_{j_2} s_{j_1}$ by simple reflections, we define
\[
T_w(\xi) = T_{s_{j_m}}(s_{j_{m-1}} \cdots s_{j_2} s_{j_1} \xi) \cdots
T_{s_{j_2}}(s_{j_1} \xi) T_{s_{j_1}}(\xi).
\]

\begin{prop}
\cite{FV2} The map $T_w(\xi)$ does not depend on choice of decomposition
$w = s_{j_m} \cdots s_{j_2} s_{j_1}$.
\end{prop}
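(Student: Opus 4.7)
The plan rests on Matsumoto's theorem together with the uniqueness of Verma module intertwiners. Any two reduced expressions for the same $w \in W$ are connected by braid moves, so it suffices to establish, for every pair of distinct simple reflections $s_i, s_j$ of Coxeter order $m = m_{ij}$, the braid identity
\[
\underbrace{T_{s_i}(\cdots)\,T_{s_j}(\cdots)\,T_{s_i}(\cdots)\cdots}_{m\text{ factors}}
\;=\;\underbrace{T_{s_j}(\cdots)\,T_{s_i}(\cdots)\,T_{s_j}(\cdots)\cdots}_{m\text{ factors}},
\]
where each argument is the shift of $\xi$ dictated by the recursive definition. To also cover non-reduced decompositions (which reduce by $s_j s_j$ cancellations), I would add the quadratic relation $T_{s_j}(s_j\xi)T_{s_j}(\xi)=\mathrm{id}$.

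First, for the quadratic relation: since $T_{s_j}(\xi)$ involves only $e_j, f_j$ via the powers $f_j^\ell e_j^\ell$ and the scalar $(\xi,\al_j^\vee)$, it suffices to work inside the $\slt$-subalgebra spanned by $e_j, f_j, h_j$, acting on an arbitrary finite-dimensional $\slt$-summand of $V[0]$. Setting $x = (\xi,\al_j^\vee)$, so $(s_j\xi,\al_j^\vee) = -x$, the product $T_{s_j}(s_j\xi)T_{s_j}(\xi)$ becomes a finite double sum that collapses, by the standard commutator formulas for $[e_j^a, f_j^b]$, to the identity on each weight space. This is a direct and classical $\slt$ computation.

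For the braid relation, both sides lie in $U(\g_{ij})$, where $\g_{ij} \subset \g$ is the rank-two subalgebra generated by $\{e_i, f_i, h_i, e_j, f_j, h_j\}$, and they depend on $\xi$ only through $(\xi,\al_i^\vee)$ and $(\xi,\al_j^\vee)$. The case $m=2$ is immediate, because the two $\slt$-triples commute. For $m \in \{3,4,6\}$ the cleanest route is the representation-theoretic interpretation from \cite{FV2}: on $V[0]_{\xi-\rho}$ the operator $T_w(\xi)$ equals the leading coefficient (with respect to the $\n_-$-degree filtration) of the endomorphism of $M_{\xi-\rho}\otimes V$ obtained by tensoring the unique, up to scalar, Verma intertwiner $M_{w\cdot(\xi-\rho)}\to M_{\xi-\rho}$ with $\mathrm{id}_V$ and composing with the singular-vector map $\Xi(\xi-\rho)$ of Section~2. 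For generic $\xi$ this intertwiner is unique in each weight space, so any two reduced expressions for $w$ yield the same composite $M_{w\cdot(\xi-\rho)}\to M_{\xi-\rho}$ up to a scalar, and the normalization built into \Ref{eqn:dynamicalweylsum} is chosen so that this scalar equals $1$. Rationality in $\xi$ then extends the identity from generic to all $\xi$.

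The principal obstacle is matching the elementary series \Ref{eqn:dynamicalweylsum} with the leading term of the intertwiner-induced action on $V[0]_{\xi-\rho}$, including the correct normalization. This is a single-parameter $\slt$ computation, but requires careful tracking of singular-vector normalizations in Verma modules and of the map $\Xi$. Once the $\slt$ identification is made, the rank-two braid relations follow immediately from the uniqueness of Verma intertwiners and the associativity of composition, and then well-definedness of $T_w(\xi)$ for all $w \in W$ follows by Matsumoto combined with the quadratic relation.
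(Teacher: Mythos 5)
The paper offers no proof of this proposition; it is imported from \cite{FV2}, so your argument can only be measured against the cited literature. Your overall strategy --- reduce via Matsumoto/Tits to the rank-two braid relations together with the quadratic relation $T_{s_j}(s_j\xi)\>T_{s_j}(\xi)=\mathrm{id}$, prove the latter inside $sl_2(j)$, and prove the former from uniqueness of Verma module inclusions --- is exactly the route of \cite{FV2}, \cite{EV1}, \cite{STV}, and your inclusion of the quadratic relation to handle non-reduced words is a point many sketches omit. The quadratic relation is correct and essentially immediate: $T_{s_j}(\xi)$ acts on the zero weight space of each irreducible $sl_2(j)$-summand $V^{(j)}_k$ by the scalar $\prod_{\ell=1}^k(\ell+\xi_j)/(\ell-\xi_j)$ (the explicit formula proved in Section \ref{sect:weylgroup}), and replacing $\xi$ by $s_j\xi$ inverts this scalar; your ``collapsing double sum'' is a harder route to the same fact but not a wrong one.

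The genuine gap is in the braid relation. Uniqueness (for generic $\xi$) of the embedding $M_{w_0\cdot(\xi-\rho)}\hookrightarrow M_{\xi-\rho}$ attached to the rank-two longest element only shows that the two composites of inclusions coming from the two reduced words agree up to a scalar-valued rational function $c(\xi)$, since each individual inclusion is itself normalized only by a choice of singular vector; hence the two products of $T_{s_i}$'s also agree only up to $c(\xi)$. Your claim that ``the normalization built into \Ref{eqn:dynamicalweylsum} is chosen so that this scalar equals $1$'' is precisely the nontrivial content of the proposition, and it is asserted rather than derived; note that $c\equiv 1$ cannot be read off from a large-$\xi$ limit, because $T_{s_j}(\xi)$ tends to $(-1)^k$ on $V^{(j)}_k[0]$, not to the identity. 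To close the argument you must either carry out the deferred computation matching \Ref{eqn:dynamicalweylsum} with the normalized leading term of the restricted intertwiner (after which $c(\xi)$ becomes the ratio of the two singular vectors of weight $w_0\cdot(\xi-\rho)$, a quantity independent of $V$ that can be evaluated to $1$ on the trivial module or on a vector annihilated by $e_i$ and $e_j$), or verify the rank-two identities directly from the explicit scalar formula as in \cite{EV1}, \cite{STV}. As written, the proposal is a correct plan whose decisive step is left unproved.
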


\begin{cor}
For $w_1, w_2 \in W$, the composition property
\[
T_{w_2 w_1}(\xi) = T_{w_2}(w_1 \xi)T_{w_1}(\xi).
\]
holds.
\end{cor}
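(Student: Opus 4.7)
The plan is to deduce the composition property directly from the preceding proposition, which asserts that $T_w(\xi)$ is independent of the choice of reduced decomposition. Given that independence, the proof amounts to concatenating decompositions of $w_1$ and $w_2$ and matching the telescoping arguments of each factor with the prescribed recipe.

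Concretely, I would fix decompositions $w_1 = s_{j_k} \cdots s_{j_2} s_{j_1}$ and $w_2 = s_{i_m} \cdots s_{i_2} s_{i_1}$ into simple reflections. Concatenating yields a decomposition
\[
w_2 w_1 = s_{i_m} \cdots s_{i_1} s_{j_k} \cdots s_{j_1},
\]
and applying the definition of $T_{w_2 w_1}(\xi)$ to this decomposition gives a product of $m+k$ factors of the form $T_{s_i}(\,\cdot\,)$. I would then split this product into its last $k$ factors and its first $m$ factors. The last $k$ factors are exactly
\[
T_{s_{j_k}}(s_{j_{k-1}} \cdots s_{j_1} \xi) \cdots T_{s_{j_1}}(\xi) = T_{w_1}(\xi),
\]
by definition. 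The first $m$ factors are
\[
T_{s_{i_m}}(s_{i_{m-1}} \cdots s_{i_1} w_1 \xi) \cdots T_{s_{i_1}}(w_1 \xi) = T_{w_2}(w_1 \xi),
\]
again by the definition applied with argument $w_1 \xi$.

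Combining these identifications yields $T_{w_2 w_1}(\xi) = T_{w_2}(w_1 \xi) T_{w_1}(\xi)$, which is the desired composition property. The only subtle point is that the definition of $T_{w_2 w_1}(\xi)$ depends a priori on the choice of decomposition, but the preceding proposition removes that ambiguity, so we are free to use the concatenated decomposition above. Thus this is not really an obstacle at all; the entire content of the corollary is packaged into the independence statement, and no further calculation with the explicit sum \eqref{eqn:dynamicalweylsum} is needed.
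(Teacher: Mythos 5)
Your argument is correct and is precisely the one the paper leaves implicit: the corollary is stated without proof as an immediate consequence of the preceding proposition, via exactly the concatenation-and-telescoping of decompositions that you describe. The only point to keep in mind is that this requires the independence of $T_w(\xi)$ from the choice of an \emph{arbitrary} word in simple reflections (not merely reduced ones) --- which is how the paper states that proposition, so your reading is the intended one.
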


We note that this map $T_w(\xi)$ is identical to the dynamical Weyl
group element $A_w(\xi - \rho)$ acting on $V[0]$, see \cite{TV}, \cite{EV1}, \cite{STV}.

Let $sl_2(j)$ denote the
subalgebra of $\g$ generated by $e_{j}$ and $f_{j}$
and let $V^{(j)}_k$ be a $2k+1$-dimensional irreducible $sl_2(j)$ submodule
of $V$.

\begin{prop}
For $u \in V^{(j)}_k[0]$, the explicit formula
\[
T_{s_j}(\xi)u = \frac{(1+ \xi_j)(2+ \xi_j)\cdots(k+ \xi_j)}
{(1 - \xi_j)(2-\xi_j) \cdots (k-\xi_j)} u
\]
holds, where $\xi_j$ denotes $(\xi, \al_j^\vee)$.
\end{prop}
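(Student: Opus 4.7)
\ The plan is to reduce the computation to the irreducible $sl_2(j)$-representation $V^{(j)}_k$, reduce further to a rational-function identity in a single variable, and verify that identity by partial fractions.

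First I would note that since $T_{s_j}(\xi)$ is built entirely out of $e_j$ and $f_j$ together with scalars depending on $\xi_j$, it preserves $V^{(j)}_k$, so the entire statement takes place inside this one $sl_2(j)$-irrep. The zero $h_j$-weight subspace of $V^{(j)}_k$ is one-dimensional, spanned by $u_0 = f_j^k v$ where $v$ is a highest weight vector satisfying $e_j v = 0$ and $h_j v = 2k\,v$. Hence it suffices to verify the formula for $u = u_0$.

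Next I would compute $f_j^\ell e_j^\ell u$ explicitly. Using the standard $sl_2$ identity $[e_j, f_j^n] = n\, f_j^{n-1}(h_j - n + 1)$, an easy induction on $\ell$ gives
\[
e_j^\ell (f_j^k v) \;=\; \frac{(k+\ell)!}{(k-\ell)!}\, f_j^{k-\ell} v \qquad\text{for } 0 \le \ell \le k,
\]
with $e_j^\ell u = 0$ for $\ell > k$. Consequently $f_j^\ell e_j^\ell u = \frac{(k+\ell)!}{(k-\ell)!}\,u$, and the defining series (\ref{eqn:dynamicalweylsum}) truncates. Using $\frac{1}{(\ell!)^2}\frac{(k+\ell)!}{(k-\ell)!} = \binom{k}{\ell}\binom{k+\ell}{\ell}$, the formula becomes
\[
T_{s_j}(\xi)\,u \;=\; \bigg(\sum_{\ell=0}^k (-1)^\ell \binom{k}{\ell}\binom{k+\ell}{\ell}\, \frac{\xi_j}{\xi_j - \ell}\bigg)\, u.
\]

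It remains to prove the rational-function identity (in $x = \xi_j$)
\[
F(x) := \sum_{\ell=0}^k (-1)^\ell \binom{k}{\ell}\binom{k+\ell}{\ell}\, \frac{x}{x-\ell} \;=\; \prod_{i=1}^k \frac{i+x}{i-x} =: G(x).
\]
I would do this by partial fractions. Both $F$ and $G$ have only simple poles at $x = 1, 2, \dots, k$ (in particular $F$ has no pole at $x = 0$, since the $\ell = 0$ term is the constant $1$), and both tend to $(-1)^k$ as $x \to \infty$; for $F$ this uses the classical identity $\sum_{\ell=0}^k (-1)^\ell \binom{k}{\ell}\binom{k+\ell}{\ell} = (-1)^k$ (the evaluation $P_k(-1)$ of the $k$-th Legendre polynomial). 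A direct residue computation at $x = \ell$ then yields $(-1)^\ell\,\ell\, \binom{k}{\ell}\binom{k+\ell}{\ell}$ for both sides, so $F - G$ is a bounded rational function without poles that vanishes at infinity and is therefore identically zero.

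The main obstacle is matching the residues on the $G$ side: collapsing the product $\prod_{i \neq \ell}\frac{i+\ell}{i-\ell}$ and combining it with the $-2\ell$ contributed by the singular factor $\frac{\ell + x}{\ell - x}$ into the required $(-1)^\ell\,\ell\, \binom{k}{\ell}\binom{k+\ell}{\ell}$ requires a short but careful sign and factorial bookkeeping. Once that identity is in place, the rest of the proof is immediate $sl_2$ representation theory.
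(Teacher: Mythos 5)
Your proof is correct and follows essentially the same route as the paper's: truncate the series using $e_j^{k+1}u=0$, compute $f_j^\ell e_j^\ell u = \frac{(k+\ell)!}{(k-\ell)!}u$, and establish the resulting rational-function identity by matching the simple poles and residues at $\xi_j=1,\dots,k$. The only difference is how the remaining constant is pinned down: the paper evaluates at $\xi_j=0$ (where only the $\ell=0$ term survives), while you compare limits at infinity via the Legendre evaluation $\sum_{\ell}(-1)^\ell\binom{k}{\ell}\binom{k+\ell}{\ell}=(-1)^k$; both are valid.
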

\begin{proof}
Since $e_j^{k+1}$ applied to $u \in V^{(j)}_k[0]$ is zero, we have
\[
T_{s_j}(\xi)u = \sum_{\ell=0}^k (-1)^{\ell}
\left(\frac{1}{\ell!}\right)^2 \frac{(\xi, \al_j^\vee)}{(\xi, \al_j^\vee) - \ell} f_j^\ell e_j^\ell u.
\]
Each $f_j^\ell e_j^\ell u$ is equal to $\frac{(k + \ell)!}{(k - \ell)!} u$,
so we have
\[
T_{s_j}(\xi)u = \sum_{\ell=0}^k (-1)^\ell \left(\frac{1}{\ell!}\right)^2
\frac{(\xi, \al_j^\vee)(k+\ell)!}{((\xi,\al_j^\vee)-\ell)(k-\ell)!} u.
\]
Thus, $T_{s_j}(\xi)u $ is a degree $k$ rational function of $\xi$
with simple poles at $(\xi, \al_j^\vee)$ equal to $1, 2, \dots, k$.
The residue at $\ell$ for $\ell \in \{1, 2, \dots, k\}$ is given by
\[
{\rm Res}_{(\xi, \al_j^\vee) = \ell} T_{s_j}(\xi)u =
(-1)^\ell \frac{(k+\ell)!}{\ell!(\ell-1)!(k-\ell)!}.
\]
This proves the proposition up to a constant multiple. The constant is
fixed by $T_{s_j}(0) = 1$.
\end{proof}

\begin{lem}
\cite{FV2}
Let $\xi$ satisfy (\ref{eqn:functiongeneric}). Then for all $w \in W$, the map
$\psi \mapsto w\psi$ is an isomorphism from $E(\xi)$ to $E(w\xi)$.
\end{lem}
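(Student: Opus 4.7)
The plan is to combine the Weyl-invariance of $H_0$ (cited just above from \cite{FW}) with a direct check that the action $\psi \mapsto w\psi$ sends $\mc{A}(\xi)\otimes V[0]$ into $\mc{A}(w\xi)\otimes V[0]$; the inverse will be supplied automatically by $w^{-1}$.

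Concretely, given $\psi(\la) = e^{2\pi i\xi(\la)}\phi(\la) \in E(\xi)$ with $\phi \in \mc{A}\otimes V[0]$, I would compute
\[
(w\psi)(\la) \;=\; w\bigl(\psi(w^{-1}\la)\bigr) \;=\; e^{2\pi i\,\xi(w^{-1}\la)}\, w\bigl(\phi(w^{-1}\la)\bigr) \;=\; e^{2\pi i\,(w\xi)(\la)}\, w\bigl(\phi(w^{-1}\la)\bigr),
\]
using $\xi(w^{-1}\la) = (w\xi)(\la)$ from the $W$-invariance of the bilinear form on $\h^*$. The exponential prefactor is exactly what is required for membership in $\mc{A}(w\xi) \otimes V[0]$. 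Since $W$ permutes $\Dl$, the hyperplane arrangement $\cup_{\al\in\Dl}\{X_\al=1\}$ is stable under the substitution $\la \mapsto w^{-1}\la$, and since $V[0]$ is itself $W$-stable, the factor $w(\phi \circ w^{-1})$ is $V[0]$-valued and has singularities only on the prescribed locus. The Weyl-invariance of $H_0$ then yields $H_0(w\psi) = w(H_0\psi) = \pi i(\xi,\xi)(w\psi) = \pi i(w\xi,w\xi)(w\psi)$, so $w\psi \in E(w\xi)$. Running the same argument with $w^{-1}$ in place of $w$ produces a map $E(w\xi)\to E(\xi)$ inverse to the first, and the lemma follows.

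I expect the main subtlety to be confirming that $w(\phi\circ w^{-1})$ genuinely lies in $\mc{A}\otimes V[0]$ as a power series in $X_1,\dots,X_r$, not merely as a meromorphic function on the torus. Under a simple reflection $s_j$ the identities $X_j\circ s_j = X_j^{-1}$ and $X_k \circ s_j = X_k X_j^{-(\al_k,\al_j^\vee)}$ naively introduce inverse powers of $X_j$, and one must check that these rearrange — using the explicit form of eigenfunctions from Proposition~\ref{prop:eigenfunct} together with the defining relations of $\mc{A}$ — into a bona fide element of $\mc{A}$. The genericity hypothesis (\ref{eqn:functiongeneric}) can then be invoked to ensure, via the isomorphism $P^\xi$, that $E(\xi)$ and $E(w\xi)$ have matching dimension $\dim V[0]$, so that the well-defined linear map is automatically a bijection.
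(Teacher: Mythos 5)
The paper offers no proof of this lemma (it is quoted from \cite{FV2}), so the benchmark is the argument in that reference, which proceeds by producing $w\psi^\xi_u$ explicitly as $\psi^{w\xi}_{T_w(\xi)u}$ — i.e.\ Theorem \ref{thm:scattering} is the quantitative form of this lemma. Your outline handles the routine parts correctly (the prefactor $e^{2\pi i\xi(w^{-1}\la)}=e^{2\pi i (w\xi)(\la)}$, the Weyl-invariance of $H_0$ giving the eigenvalue $\pi i(\xi,\xi)=\pi i(w\xi,w\xi)$, the $W$-stability of the pole locus $\cup_{\al\in\Dl}\{X_\al=1\}$). But the issue you flag at the end as ``the main subtlety'' is not a loose end to be confirmed later: it is the entire content of the lemma, and your proposal does not close it.

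Concretely, the substitution $\la\mapsto w^{-1}\la$ sends $X_\al$ to $X_{w\al}$, and for $w\neq e$ some positive roots become negative, so $w(\phi\circ w^{-1})$ involves strictly negative powers of the $X_j$. For $\g=\slt$ and $\phi=X_1\in\mc A$ one gets $X_1^{-1}\notin\mc A$, and the exponential prefactor does not absorb this. So $\psi\mapsto w\psi$ does \emph{not} carry $\mc A(\xi)\otimes V[0]$ into $\mc A(w\xi)\otimes V[0]$; it does so only on the eigenspace $E(\xi)$, and that is exactly what must be proved. A priori one knows only that $w\psi$ is an $H_0$-eigenfunction with the right eigenvalue, meromorphic with poles on the prescribed locus; such a function is in general a linear combination of the formal eigenfunctions $\psi^{w'\xi}_{u_{w'}}$ attached to the various chambers, and the assertion is that for $\psi\in E(\xi)$ only the $w\xi$-term survives. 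Establishing this requires genuine input — in \cite{FV2} it comes from the explicit $A_X$-formula of Proposition \ref{prop:eigenfunct} together with connection formulas reducing to rank one (where the eigenfunctions are hypergeometric and the connection coefficients between expansions at $X=0$ and $X=\infty$ produce $T_{s_j}(\xi)$), or equivalently from the trace-of-intertwiner construction. A secondary gap: your bijectivity step applies $w^{-1}$ to $E(w\xi)$ and invokes $P^{w\xi}$, which requires $w\xi$ to satisfy (\ref{eqn:functiongeneric}); since that condition quantifies over $\beta\in Q_+$ it is not manifestly $W$-invariant, so this too needs an argument.
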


\begin{thm}
\label{thm:scattering}
\cite{FV2}
The maps $T_w(\xi)$ satisfy the relation
\[
w \psi^\xi_u = \psi^{w\xi}_{T_w(\xi)u}
\]
for $u \in V[0]_{\xi - \rho}$.
\end{thm}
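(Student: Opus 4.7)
First, I would reduce the general case to the case of a simple reflection. The composition rule $T_{w_2 w_1}(\xi) = T_{w_2}(w_1\xi) T_{w_1}(\xi)$ (the corollary above) combined with the manifest $(w_2 w_1)\psi = w_2(w_1 \psi)$ gives, by induction on the length of $w$, that it suffices to establish
\[
s_j \psi^\xi_u = \psi^{s_j \xi}_{T_{s_j}(\xi) u}
\]
for each simple reflection $s_j$, each $u \in V[0]_{\xi - \rho}$, and $\xi$ in a Zariski open subset of $\h^*$; the general statement then follows by analytic continuation in $\xi$.

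Second, the preceding lemma places $s_j \psi^\xi_u$ in $E(s_j \xi)$, while $\psi^{s_j \xi}_v$ lies in $E(s_j \xi)$ by construction. Since $P^{s_j\xi}: V[0]_{s_j \xi - \rho} \to E(s_j \xi)$ is an isomorphism (Section 4.2), every element of $E(s_j \xi)$ is determined by the value at $X = 0$ of its $\mc{A}$-factor. Hence the task reduces to showing that the leading coefficient of $s_j \psi^\xi_u$ equals $T_{s_j}(\xi) u$.

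Third, writing $\psi^\xi_u(\la) = e^{2\pi i \xi(\la)}\phi^\xi_u(X)$, the Weyl action gives
\[
(s_j \psi^\xi_u)(\la) = e^{2\pi i (s_j \xi)(\la)}\, s_j\bigl(\phi^\xi_u(X^{s_j})\bigr),
\]
with $X^{s_j}_j = X_j^{-1}$ and $X^{s_j}_k = X_k X_j^{-(\al_k, \al_j^\vee)}$ for $k \ne j$. Because $-(\al_k,\al_j^\vee) \ge 0$ for $k \ne j$, any monomial of $\phi^\xi_u$ carrying a positive power of $X_k$, $k \ne j$, still vanishes at $X = 0$ after the substitution; only terms purely in $X_j$ contribute. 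These arise from basis elements of weight $-\ell\al_j$ in $U(\n_-)$, and since $\al_j$ is simple $U(\n_-)[-\ell\al_j] = \C f_j^\ell$, so only the diagonal Shapovalov entry $S_{\xi-\rho}(f_j^\ell,f_j^\ell) = \ell!\prod_{m=1}^\ell((\xi,\al_j^\vee) - m)$ enters. A short induction yields $A_X(f_j^\ell) = X_j^\ell/(1-X_j)^\ell \cdot f_j^\ell$, whose image under $X_j \mapsto X_j^{-1}$ has leading coefficient $(-1)^\ell f_j^\ell$. Combined with $\omega(f_j^\ell) = (-1)^\ell e_j^\ell$, the leading term of $\phi^\xi_u(X^{s_j})$ simplifies to
\[
u + \sum_{\ell \ge 1} \frac{f_j^\ell e_j^\ell\, u}{\ell!\prod_{m=1}^\ell((\xi,\al_j^\vee) - m)}.
\]

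The main obstacle is the final matching. On each $sl_2(j)$-isotypic zero-weight component $V^{(j)}_k[0]$, on which $s_j$ acts by $(-1)^k$ and $f_j^\ell e_j^\ell$ acts by $(k+\ell)!/(k-\ell)!$, applying $s_j$ to the displayed leading term and comparing with the explicit formula of the preceding proposition reduces the theorem to the scalar rational-function identity
\[
1 + \sum_{\ell=1}^k \frac{(k+\ell)!/(k-\ell)!}{\ell!\prod_{m=1}^\ell(\xi_j - m)} = \prod_{m=1}^k \frac{\xi_j + m}{\xi_j - m}, \qquad \xi_j = (\xi, \al_j^\vee),
\]
which can be established by induction on $k$ or by recognizing both sides as the same Gauss hypergeometric ratio.
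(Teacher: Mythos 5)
The paper offers no proof of this theorem --- it is quoted from [FV2] --- so there is no in-paper argument to compare against; your proposal must be judged on its own, and it checks out. The reduction to simple reflections via the cocycle property, the appeal to uniqueness of an element of $E(s_j\xi)$ with prescribed leading term, and the computation of that leading term are all sound: every monomial of $A_X(F_a)$ for $F_a$ of weight $-\nu$ is divisible by $X_\nu$ and the substitution $X\mapsto X^{s_j}$ does not change the exponents of $X_k$ for $k\neq j$, so indeed only $U(\n_-)[-\ell\al_j]=\C f_j^\ell$ contributes; the formulas $A_X(f_j^\ell)=X_j^\ell(1-X_j)^{-\ell}f_j^\ell$ (leading coefficient $(-1)^\ell$ after $X_j\mapsto X_j^{-1}$, consistent with Lemma \ref{lem:Aantipode}) and $S_{\xi-\rho}(f_j^\ell,f_j^\ell)=\ell!\prod_{m=1}^\ell(\xi_j-m)$ are correct; and your closing identity is the Chu--Vandermonde evaluation $\hyp(-k,k+1;1-\xi_j;1)=\prod_{m=1}^k\frac{\xi_j+m}{\xi_j-m}$, which is true. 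The one genuinely delicate point is the sign bookkeeping at the end, and you handled it correctly: the paper's explicit formula for $T_{s_j}(\xi)$ on $V^{(j)}_k[0]$ has denominators $(m-\xi_j)$, hence equals $(-1)^k\prod_{m=1}^k\frac{\xi_j+m}{\xi_j-m}$, and that hidden $(-1)^k$ exactly cancels the factor $(-1)^k$ coming from the action of $s_j$ on $V^{(j)}_k[0]$, leaving precisely the displayed identity with no sign. Two small points you should make explicit to close the argument: the interchange of constant-term extraction with the sum over the basis is legitimate because $\omega(F_b)u$ vanishes for all but finitely many weights, so the sum is finite; and the individual terms $A_{X^{s_j}}(F_a)$ are a priori Laurent series in $X_j$, so ``leading term'' should be read as the coefficient of $X^0$, which is well defined term by term and sums to the constant term of the total because the cited lemma guarantees $s_j\psi^\xi_u\in\mc{A}(s_j\xi)\otimes V[0]$.
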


Let $T_w(\xi)^*$ denote the adjoint operator to $T_w(\xi)$ with respect
to the Shapovalov form.

\begin{prop}
\label{prop:Asymm}
For a simple reflection $s_j$, we have that $T_{s_j}(\xi)^* = T_{s_j}(\xi)$.
\end{prop}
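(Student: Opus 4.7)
The plan is to show that each summand $f_j^{\ell} e_j^{\ell}$ appearing in the defining series for $T_{s_j}(\xi)$ is self-adjoint with respect to the Shapovalov form, and then to conclude by noting that $T_{s_j}(\xi)$ is a scalar linear combination of such operators.

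First I would record the contravariant property of the Shapovalov form. Let $\tau = a\omega$, which on Chevalley generators acts as $\tau(e_j) = f_j$, $\tau(f_j) = e_j$, $\tau(h_j) = h_j$, and which is an anti-involution of $U(\g)$. From the defining formula $S(x\bs{1}_\mu, y\bs{1}_\mu) = \gamma(\tau(x)y)(\mu)$ and its extension to tensor products one gets the standard adjoint identity
\[
S(zu, v) \ =\ S(u, \tau(z) v), \qquad z \in U(\g),\ u,v \in V.
\]

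Next I would compute, using that $\tau$ is an anti-automorphism,
\[
\tau(f_j^{\ell} e_j^{\ell}) \ =\ \tau(e_j)^{\ell}\,\tau(f_j)^{\ell} \ =\ f_j^{\ell} e_j^{\ell},
\]
so that each operator $f_j^{\ell} e_j^{\ell}$ is Shapovalov self-adjoint on $V$, and in particular on the weight zero subspace $V[0]$ which it preserves.

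Finally, because $T_{s_j}(\xi) = \sum_{\ell \ge 0} c_\ell(\xi)\, f_j^{\ell} e_j^{\ell}$ with scalar coefficients $c_\ell(\xi) = (-1)^\ell (\ell!)^{-2}(\xi,\al_j^\vee)/((\xi,\al_j^\vee) - \ell)$, it is a scalar combination of self-adjoint operators, hence itself self-adjoint. There is no real obstacle; the only point to watch is that the a priori infinite series terminates on any vector of $V[0]$, since $V$ is a tensor product of finite-dimensional $\g$-modules and therefore decomposes into finite-dimensional $sl_2(j)$-isotypic components, on each of which $e_j^{\ell}$ vanishes for $\ell$ large. Thus the adjoint computation is literally a finite sum for each fixed pair of arguments, and the identity $T_{s_j}(\xi)^* = T_{s_j}(\xi)$ follows.
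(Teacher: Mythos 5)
Your proof is correct and takes essentially the same route as the paper, whose entire argument is the one-line observation that each $f_j^\ell e_j^\ell$ is self-adjoint with respect to the Shapovalov form. You have simply filled in the details (the contravariance of $S$ under the anti-involution $a\omega$ and the termination of the series on $V[0]$), all of which check out.
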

\begin{proof}
This follows from the fact that $f_j^\ell e_j^\ell$ is self-adjoint with
respect to the Shapovalov form.
\end{proof}

\begin{cor}
\label{cor:dynamicaladjoint}
For $\xi \in \h$ and $w = s_{j_m} \cdots s_{j_1}$, we have
\[
T_w(\xi)^* = T_{s_{j_1}}(\xi) T_{s_{j_2}}(s_{j_1} \xi) \cdots
T_{s_{j_m}}(s_{j_{m-1}} \cdots s_{j_1} \xi).
\]
\end{cor}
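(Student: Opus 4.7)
The plan is essentially mechanical: unfold the definition of $T_w(\xi)$ as an ordered product of simple-reflection factors, take the adjoint with respect to the Shapovalov form (which reverses the order of composition), and then observe that by Proposition \ref{prop:Asymm} each individual factor is self-adjoint, so no factor actually gets changed when one takes its adjoint. Only the overall ordering reverses.

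More concretely, starting from the definition
\[
T_w(\xi) = T_{s_{j_m}}(s_{j_{m-1}} \cdots s_{j_1} \xi) \cdots T_{s_{j_2}}(s_{j_1} \xi)\, T_{s_{j_1}}(\xi),
\]
I would apply the general identity $(AB)^* = B^* A^*$ for operators adjoint with respect to the Shapovalov form, iterated along the product, to obtain
\[
T_w(\xi)^* = T_{s_{j_1}}(\xi)^* \, T_{s_{j_2}}(s_{j_1}\xi)^* \cdots T_{s_{j_m}}(s_{j_{m-1}} \cdots s_{j_1}\xi)^*.
\]
Then I would invoke Proposition \ref{prop:Asymm}, which says that each $T_{s_j}(\,\cdot\,)$ is self-adjoint regardless of the argument (since the formula \eqref{eqn:dynamicalweylsum} is a scalar-coefficient combination of the operators $f_j^\ell e_j^\ell$, each of which is self-adjoint), to drop all the stars and arrive at the claimed formula.

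There is really no obstacle here, since the only two ingredients are the elementary fact that adjoints reverse products and the already-established Proposition \ref{prop:Asymm}. The one thing worth double-checking is that the scalar rational-in-$\xi$ coefficients in front of $f_j^\ell e_j^\ell$ in \eqref{eqn:dynamicalweylsum} commute through the Shapovalov pairing (they do, being ordinary complex numbers once $\xi$ is fixed), so self-adjointness is preserved when evaluated at the shifted arguments $s_{j_{k-1}} \cdots s_{j_1}\xi$. Thus the statement follows as a clean corollary of Proposition \ref{prop:Asymm} and the definition of $T_w(\xi)$.
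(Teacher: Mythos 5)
Your proposal is correct and is precisely the intended argument: the paper states this as an immediate corollary of Proposition \ref{prop:Asymm}, leaving implicit exactly the two steps you spell out, namely that taking the Shapovalov adjoint reverses the order of the factors in the defining product of $T_w(\xi)$, and that each simple-reflection factor $T_{s_{j_k}}(s_{j_{k-1}}\cdots s_{j_1}\xi)$ is self-adjoint whatever its argument. No discrepancy with the paper's route.
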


\begin{lem}
\label{lem:commutesimple}
For $s_j$ a simple reflection and $w_0$ the longest element of $W$, the relation
\[
w_0 T_{s_j}(\xi) w_0^{-1} = T_{w_0 s_j w_0^{-1}}(w_0 s_j \xi).
\]
holds.
\end{lem}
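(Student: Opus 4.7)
The plan is to expand both sides using the explicit formula \eqref{eqn:dynamicalweylsum} and match them term by term. Introduce the simple root $\al_{j^*} := -w_0 \al_j$, so that $w_0 s_j w_0^{-1} = s_{j^*}$ and $\al_{j^*}^\vee = -w_0 \al_j^\vee$. A lift $\dot w_0$ of $w_0$ to the Tits extension satisfies $\dot w_0 e_j \dot w_0^{-1} = c_1 f_{j^*}$ and $\dot w_0 f_j \dot w_0^{-1} = c_2 e_{j^*}$ for some scalars $c_1, c_2$, since $\dot w_0$ carries $\g_{\al_j}$ to $\g_{-\al_{j^*}}$ and $\g_{-\al_j}$ to $\g_{\al_{j^*}}$. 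Applying $\dot w_0$ to the Chevalley relation $[e_j, f_j] = h_j$ and using $w_0 h_j = -h_{j^*}$ forces $c_1 c_2 = 1$, hence
\[
w_0\, f_j^\ell e_j^\ell\, w_0^{-1} \;=\; (c_2 e_{j^*})^\ell (c_1 f_{j^*})^\ell \;=\; (c_1 c_2)^\ell\, e_{j^*}^\ell f_{j^*}^\ell \;=\; e_{j^*}^\ell f_{j^*}^\ell.
\]

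The key step is then to replace $e_{j^*}^\ell f_{j^*}^\ell$ by $f_{j^*}^\ell e_{j^*}^\ell$ on the zero weight space. Decompose $V$ as a module over the commuting pair $sl_2(j^*) \oplus \ker(\al_{j^*})$; then $V[0]$ lies in the joint subspace where $\ker(\al_{j^*})$ acts by zero and coincides with the $h_{j^*} = 0$ subspace there. On the one-dimensional zero weight subspace of a spin-$k$ irreducible $sl_2(j^*)$-submodule, both $e_{j^*}^\ell f_{j^*}^\ell$ and $f_{j^*}^\ell e_{j^*}^\ell$ act by the \emph{same} scalar $\tfrac{(k+\ell)!}{(k-\ell)!}$, by the direct $sl_2$ computation already used in the proof of the preceding proposition. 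Consequently $e_{j^*}^\ell f_{j^*}^\ell = f_{j^*}^\ell e_{j^*}^\ell$ as operators on $V[0]$.

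Finally, the scalar coefficients match because
\[
(w_0 s_j \xi, \al_{j^*}^\vee) \;=\; -(w_0 s_j \xi,\, w_0 \al_j^\vee) \;=\; -(s_j \xi, \al_j^\vee) \;=\; (\xi, \al_j^\vee),
\]
using the $W$-invariance of the form and $(s_j \xi, \al_j^\vee) = -(\xi, \al_j^\vee)$. Summing over $\ell$ in \eqref{eqn:dynamicalweylsum} therefore gives
\[
w_0 T_{s_j}(\xi) w_0^{-1} \;=\; \sum_{\ell=0}^\infty (-1)^\ell \left(\tfrac{1}{\ell!}\right)^{\!2} \tfrac{(\xi,\al_j^\vee)}{(\xi,\al_j^\vee) - \ell}\, f_{j^*}^\ell e_{j^*}^\ell \;=\; T_{s_{j^*}}(w_0 s_j \xi),
\]
as required. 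The main obstacle is precisely the commutativity $e^\ell f^\ell = f^\ell e^\ell$ on the zero weight subspace: it fails on other weight spaces, and the entire lemma hinges on its validity after restriction to $V[0]$, which compensates for the fact that the definition \eqref{eqn:dynamicalweylsum} is written in terms of $f_j^\ell e_j^\ell$ rather than the naturally conjugate expression $e_{j^*}^\ell f_{j^*}^\ell$.
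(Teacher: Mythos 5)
Your proof is correct and follows essentially the same route as the paper's: conjugate the explicit formula term by term, use that $w_0$ carries $f_j^\ell e_j^\ell$ to $e_{j^*}^\ell f_{j^*}^\ell$, exchange $e^\ell f^\ell$ and $f^\ell e^\ell$ on the zero weight subspace, and match the scalar coefficient by $W$-invariance of the form. The only difference is that you justify the normalization $c_1c_2=1$ via the Tits lift and the zero-weight commutativity via the explicit $sl_2$ weight-string computation, where the paper simply cites \cite{TV} for the conjugation identity and asserts the commutativity on $V[0]$.
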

\begin{proof}
We note the relation $w (e_{-\al}^l e_{\al}^l) w^{-1} =
e_{-w\al}^l e_{w\al}^l$ \cite{TV}. The simple root
$- w_0 \al_j$ equals $w_0 s_j \al_j$, so we have
$w_0 (f_{\al_j}^l e_{\al_j}^l) w_0^{-1} = e_{w_0 s_j \al_j}^l
f_{w_0 s_j \al_j}^l$. For $u \in V[0]$, the equation
$e_\al^l f_\al^l u = f_\al^l e_\al^l u$ holds, so we have
\[
w_0 T_{s_j} w_0^{-1} = \sum_{\ell=0}^\infty (-1)^{\ell}
\left(\frac{1}{\ell!}\right)^2 \frac{(\xi, \al_j^\vee)}{(\xi, \al_j^\vee) - \ell}
f_{w_0 s_j \al_j}^\ell e_{w_0 s_j \al_j}^\ell.
\]
Since $(\xi, \al_j^\vee)$ equals $(w_0 s_j \xi, w_0 s_j \al_j^\vee)$
we have
\[
w_0 T_{s_j} w_0^{-1} = T_{s_{w_0 s_j \al_j}}(w_0 s_j \xi).
\]
The simple reflection $s_{w_0 s_j \al_j}$ is equal to $w_0 s_j w_0^{-1}$
which gives the lemma.
\end{proof}

\begin{prop}
\label{prop:commuteadjoint}
For $w \in W$ and $w_0$ the longest element of $W$, the relation
\[
w_0 T_{w}(\xi)^* w_0^{-1} = T_{w_0 w^{-1} w_0^{-1}}(w_0 w \xi).
\]
holds.
\end{prop}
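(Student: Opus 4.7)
I would argue by induction on the length $\ell(w)$ of $w\in W$. The base case $w=s_j$ is immediate from Proposition~\ref{prop:Asymm} (which gives $T_{s_j}(\xi)^*=T_{s_j}(\xi)$) combined with Lemma~\ref{lem:commutesimple}, noting that $s_j^{-1}=s_j$.

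For the inductive step, I would decompose $w=s_j w'$ with $\ell(w')=\ell(w)-1$. The composition property $T_{w_2 w_1}(\xi)=T_{w_2}(w_1\xi)\,T_{w_1}(\xi)$ gives $T_w(\xi)=T_{s_j}(w'\xi)\,T_{w'}(\xi)$; taking adjoints and using Proposition~\ref{prop:Asymm} on the simple factor yields $T_w(\xi)^*=T_{w'}(\xi)^*\,T_{s_j}(w'\xi)$. Conjugating by $w_0$, splitting across the two factors, applying the induction hypothesis to $T_{w'}(\xi)^*$ and Lemma~\ref{lem:commutesimple} to $T_{s_j}(w'\xi)$, I would obtain
\[
w_0 T_w(\xi)^* w_0^{-1} = T_{w_0(w')^{-1}w_0^{-1}}(w_0 w'\xi)\cdot T_{w_0 s_j w_0^{-1}}(w_0 s_j w'\xi).
\]

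The plan is then to reassemble this product into a single $T$ via the composition property used in reverse. The relevant identities are $s_j w'\xi = w\xi$ (so the second argument is $w_0 w\xi$), $(w_0 s_j w_0^{-1})(w_0 w\xi)=w_0 s_j w\xi = w_0 w'\xi$ (which sets up the composition property correctly), and $w_0(w')^{-1}w_0^{-1}\cdot w_0 s_j w_0^{-1} = w_0(s_j w')^{-1}w_0^{-1} = w_0 w^{-1} w_0^{-1}$ (which produces the correct Weyl element). Assembling these gives $T_{w_0 w^{-1} w_0^{-1}}(w_0 w\xi)$, closing the induction.

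The only real obstacle is bookkeeping: carefully tracking the shifts of $\xi$ through the adjoint and through conjugation by $w_0$, and ensuring the composition property is applied with the correct ordering of factors. Conceptually, the statement encodes a compatibility between the adjoint and conjugation by $w_0$, mediated by the involution $w\mapsto w_0 w^{-1} w_0^{-1}$ of $W$ together with the compensating shift $\xi\mapsto w_0 w\xi$ of the spectral parameter.
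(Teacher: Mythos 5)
Your proof is correct and follows essentially the same route as the paper: the paper writes $T_w(\xi)^*$ as the reversed product of simple factors (Corollary \ref{cor:dynamicaladjoint}, itself a consequence of Proposition \ref{prop:Asymm}), conjugates each factor by $w_0$ via Lemma \ref{lem:commutesimple}, and reassembles with the composition property, which is exactly what your induction does one factor at a time. The bookkeeping identities you list all check out, so the induction closes as claimed.
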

\begin{proof}
By Corollary \ref{cor:dynamicaladjoint}, the operator $T_w(\xi)^*$
can be written as
\[
T_w(\xi)^* =
T_{s_{i_1}}(\xi) T_{s_{i_2}}(s_{i_1} \xi) \cdots T_{s_{i_k}}(s_{i_{k-1}} \cdots s_{i_1} \xi)
\]
for $w = s_{i_k} \cdots s_{i_2} s_{i_1}$. Applying Lemma \ref{lem:commutesimple}
successively to the simple reflections we obtain
\[
w_0 T_w(\xi)^* w_0^{-1} = T_{w_0 s_{i_1} w_0^{-1}}(w_0 s_{i_1} \xi) T_{w_0 s_{i_2} w_0^{-1}}
(w_0 s_{i_2} s_{i_1} \xi) \cdots T_{w_0 s_{i_k} w_0^{-1}}(w_0 w \xi).
\]
The composition property is applied to give the proposition.
\end{proof}

\begin{thm}
\label{prop:QandT}
\cite{EV1}
As elements of $End_{\C}V[0] \otimes \C(\h^*)$, we have
\[
\mc{Q}(\xi) = w_0 T_{w_0}(\xi).
\]
\end{thm}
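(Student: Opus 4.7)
The plan is to compute the scalar product $\langle \psi^\xi_u, \psi^\xi_v\rangle$ for generic $\xi \in \h^*$ and $u,v \in V[0]$ in two different ways and then compare them using the nondegeneracy of the Shapovalov form on $V[0]$. On the one hand, Proposition \ref{prop:eigentrigprod} already identifies this pairing with
\[
\langle \psi^\xi_u, \psi^\xi_v\rangle = S(u,\mathcal{Q}(\xi)v).
\]
It therefore suffices to produce a second evaluation yielding $S(u, w_0 T_{w_0}(\xi)v)$, from which the identity in $\mathrm{End}_\C V[0] \otimes \C(\h^*)$ follows for generic $\xi$ and then for all $\xi$ by rationality.

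To obtain the second evaluation I would use the scattering relation of Theorem \ref{thm:scattering}. Since $w_0^2 = 1$, applying $w_0$ to the identity $w_0\psi^\xi_v = \psi^{w_0\xi}_{T_{w_0}(\xi)v}$ gives $\psi^\xi_v = w_0\,\psi^{w_0\xi}_{T_{w_0}(\xi)v}$, and unwinding the definition $(w_0\psi)(\la) = w_0\cdot\psi(w_0\la)$ translates this into the relation $\phi^\xi_v(\la) = w_0\,\phi^{w_0\xi}_{T_{w_0}(\xi)v}(w_0\la)$. Substituting into the integrand $S(\psi^\xi_u(\la),\psi^\xi_v(-\la))$ defining $\langle\cdot,\cdot\rangle$ and cancelling the factors $e^{2\pi i\xi(\la)}$ and $e^{-2\pi i\xi(\la)}$ (which is legitimate because $(w_0\xi)(w_0\la) = \xi(\la)$), one arrives at
\[
\langle \psi^\xi_u, \psi^\xi_v\rangle = \int_C S\bigl(\phi^\xi_u(\la),\,w_0\,\phi^{w_0\xi}_{T_{w_0}(\xi)v}(-w_0\la)\bigr)\,d\la_1\cdots d\la_r.
\]

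I would then evaluate this integral by the same torus/constant-term argument used in the proof of Proposition \ref{prop:eigentrigprod}. The key observation is that the involution $-w_0$ permutes the simple roots: there is a permutation $\tau$ of $\{1,\dots,r\}$ with $-w_0\al_j = \al_{\tau(j)}$, so
\[
X_j(-w_0\la) = e^{-2\pi i\al_j(-w_0\la)} = e^{2\pi i(w_0\al_j)(\la)} = e^{-2\pi i\al_{\tau(j)}(\la)} = X_{\tau(j)}(\la).
\]
Therefore $\phi^{w_0\xi}_{T_{w_0}(\xi)v}(-w_0\la)$, regarded as a function of the variables $X(\la)$, is obtained from the original $\phi^{w_0\xi}_{T_{w_0}(\xi)v}$ merely by relabeling its arguments through $\tau$. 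In particular it is regular at $X(\la)=0$, with value equal to its leading coefficient $T_{w_0}(\xi)v$. The torus integral then literally produces the value of the integrand at $X=0$, namely $S(u, w_0 T_{w_0}(\xi)v)$. Comparing this with the first evaluation and using nondegeneracy of the Shapovalov form on $V[0]$ completes the proof.

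The main obstacle is bookkeeping around the involution $-w_0$: one has to verify both that $-w_0$ acts as a permutation on the simple roots (the standard statement about the longest element of $W$) and that after the relabeling the new form of the integrand is still a power series in $X(\la)$ regular at the origin, so that the contour integral really does extract the value at $X=0$. This regularity is in fact immediate from the relabeling, and no analog of Lemma \ref{lem:Aantipode} is needed—it is replaced by the trivial evaluation of $\phi^{w_0\xi}_{T_{w_0}(\xi)v}$ at its own origin.
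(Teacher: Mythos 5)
The paper offers no proof of this theorem: it is quoted from \cite{EV1}, where the identification of $\mathcal{Q}(\xi)$ with the dynamical Weyl group operator $w_0 T_{w_0}(\xi)$ is obtained by direct representation-theoretic computation with intertwiners. Your argument is therefore a genuinely different route, and it works. You deduce the identity from two inputs that are established independently of it within the present paper's logical structure: the orthogonality formula $\langle\psi^\xi_u,\psi^\xi_v\rangle=S(u,\mathcal{Q}(\xi)v)$ of Proposition \ref{prop:eigentrigprod} (proved here via Lemma \ref{lem:Aantipode}, with no reference to $T_{w_0}$), and the scattering relation of Theorem \ref{thm:scattering} (from \cite{FV2}, proved there from the asymptotics of the eigenfunctions, not from $\mathcal{Q}$). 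The pivot of your computation is sound: since $-w_0$ permutes the simple roots, the substitution $\la\mapsto-w_0\la$ acts on the variables by $X_j\mapsto X_{\tau(j)}$, so $\phi^{w_0\xi}_{T_{w_0}(\xi)v}(-w_0\la)$ is again a power series in $X(\la)$ regular at the origin with unchanged constant term $T_{w_0}(\xi)v$; this is exactly what replaces Lemma \ref{lem:Aantipode}, and the torus integral then returns $S(u,w_0T_{w_0}(\xi)v)$. Two points you should make explicit: (i) the tensor Shapovalov form is nondegenerate on $V[0]$ (each $V_p$ is irreducible, so each factor's form is nondegenerate and distinct weight subspaces are orthogonal), which is what lets you cancel $u$; and (ii) for generic $\xi$ one has $V[0]_{\xi-\rho}=V[0]_{w_0\xi-\rho}=V[0]$, so both canonical eigenfunctions are defined, after which the identity of rational functions extends to all $\xi$ as you say. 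What your approach buys is a proof internal to the analytic framework of this paper; its cost is reliance on the here-unproved Theorem \ref{thm:scattering}, whereas \cite{EV1} derives the statement purely algebraically without reference to eigenfunctions.
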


\subsection{Scalar products}

\begin{thm}
\label{thm:weyltrigprod}
For $u, v \in V[0]_{\xi -\rho}$ and $w \in W$, we have
\[
\langle T_w(\xi) u, T_w(\xi)v \rangle_{w \xi} =
\langle u, v \rangle_\xi.
\]
\end{thm}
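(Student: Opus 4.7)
The plan is to reduce the identity to a purely algebraic manipulation of Shapovalov adjoints, the composition rule for the scattering matrices, and the identification of $\mathcal{Q}(\xi)$ with $w_0 T_{w_0}(\xi)$. The key tools are Definition \ref{def:trigprod}, Theorem \ref{prop:QandT}, Proposition \ref{prop:commuteadjoint}, and the composition property $T_{w_2 w_1}(\xi) = T_{w_2}(w_1 \xi) T_{w_1}(\xi)$.

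First I would open up the left hand side:
\[
\langle T_w(\xi) u, T_w(\xi) v \rangle_{w\xi}
= S\bigl(T_w(\xi) u,\, \mathcal{Q}(w\xi) T_w(\xi) v\bigr)
= S\bigl(u,\, T_w(\xi)^{*}\, \mathcal{Q}(w\xi)\, T_w(\xi) v\bigr),
\]
moving $T_w(\xi)$ across $S$ via its Shapovalov adjoint. By Theorem \ref{prop:QandT} I then replace $\mathcal{Q}(w\xi)$ with $w_0 T_{w_0}(w\xi)$, and collapse $T_{w_0}(w\xi) T_w(\xi) = T_{w_0 w}(\xi)$ by composition, reducing the interior operator to $T_w(\xi)^{*}\, w_0\, T_{w_0 w}(\xi)$.

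Next I would substitute the adjoint formula from Proposition \ref{prop:commuteadjoint}, which rearranges to $T_w(\xi)^{*} = w_0^{-1}\, T_{w_0 w^{-1} w_0^{-1}}(w_0 w\xi)\, w_0$. Using $w_0^2 = 1$ (the longest element is an involution) and a second application of the composition property with $A = w_0 w^{-1} w_0^{-1}$ and $B = w_0 w$ (note the arguments match since $B\xi = w_0 w\xi$), the interior operator becomes
\[
w_0\, T_{w_0 w^{-1} w_0^{-1}}(w_0 w\xi)\, T_{w_0 w}(\xi)
= w_0\, T_{(w_0 w^{-1} w_0^{-1})(w_0 w)}(\xi)
= w_0\, T_{w_0}(\xi) = \mathcal{Q}(\xi),
\]
the last equality again by Theorem \ref{prop:QandT}. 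Hence $\langle T_w(\xi) u, T_w(\xi) v \rangle_{w\xi} = S(u, \mathcal{Q}(\xi) v) = \langle u, v \rangle_\xi$, as required.

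The main obstacle is purely bookkeeping: one must verify that every $T_{\cdot}(\cdot)$ is composed with the right spectral argument at each step, and that the group element $(w_0 w^{-1} w_0^{-1})(w_0 w)$ genuinely collapses to $w_0$. A slightly different route, which I would mention as a sanity check, is to use Proposition \ref{prop:eigentrigprod} to write both sides as integral pairings $\langle \psi^\xi_u, \psi^\xi_v \rangle$ and $\langle \psi^{w\xi}_{T_w(\xi)u}, \psi^{w\xi}_{T_w(\xi)v}\rangle$, invoke Theorem \ref{thm:scattering} to rewrite the latter as $\langle w \psi^\xi_u, w \psi^\xi_v \rangle$, and then appeal to Weyl invariance of $\langle \, , \, \rangle$ on $\oplus_{\beta \in Q} \mc{A}(\xi + \beta) \otimes V[0]$; this second route avoids the explicit formula for $\mathcal{Q}$ but relies on the Weyl invariance of the integral pairing established earlier in this section.
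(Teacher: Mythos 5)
Your main argument is correct and is essentially the paper's own proof: both expand the forms via $\mathcal{Q} = w_0 T_{w_0}$ (Theorem \ref{prop:QandT}), move $T_w(\xi)$ across the Shapovalov form, apply Proposition \ref{prop:commuteadjoint}, and collapse everything with the cocycle condition to recover $w_0 T_{w_0}(\xi)$; your reordering of the composition steps is immaterial. One caution: the ``sanity check'' route is circular as stated, since Weyl invariance of the integral pairing on eigenfunctions is exactly Corollary \ref{cor:weylprod}, which the paper deduces \emph{from} this theorem rather than establishing beforehand.
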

\begin{proof}
By Theorem \ref{prop:QandT}, we have that
\[
\langle u, v \rangle_\xi
= S(u, w_0 T_{w_0}(\xi) v)
\]
and that
\[
\langle T_w(\xi) u, T_w(\xi)v \rangle_{w \xi}
= S( T_w(\xi) u, w_0 T_{w_0}(w \xi) T_w(\xi) v).
\]
The definition of the adjoint operator $T_w(\xi)^*$ gives
\[
S( T_w(\xi) u, w_0 T_{w_0}(w \xi) T_w(\xi) v)
= S( u, T_w(\xi)^* w_0 T_{w_0}(w \xi) T_w(\xi)v).
\]
By Proposition \ref{prop:commuteadjoint}, we have
\[
S( u, T_w(\xi)^* w_0 T_{w_0}(w \xi) T_w(\xi)v)
= S( u, w_0 T_{w_0 w^{-1} w_0^{-1}}(w_0 w \xi) T_{w_0}(w \xi) T_w(\xi) v).
\]
The cocycle condition gives
\[
S( u, w_0 T_{w_0 w^{-1} w_0^{-1}}(w_0 w \xi) T_{w_0}(w \xi) T_w(\xi) v)
= S( u, w_0 T_{w_0}(\xi) v)
\]
which completes the proof.
\end{proof}

\begin{cor}
\label{cor:weylprod}
For $\xi \in \h^*$, let $u$ and $v$ belong to $V[0]_{\xi - \rho}$. Then
\[
\langle w \psi^\xi_u, w\psi^\xi_v \rangle = \langle \psi^\xi_u, \psi^\xi_v \rangle.
\]
holds.
\end{cor}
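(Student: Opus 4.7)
The proof is a short chain assembling results already established in this section together with Proposition~\ref{prop:eigentrigprod}. First I would invoke Theorem~\ref{thm:scattering} to rewrite the Weyl-translated eigenfunctions in canonical form:
\[
w\psi^\xi_u = \psi^{w\xi}_{T_w(\xi)u}, \qquad w\psi^\xi_v = \psi^{w\xi}_{T_w(\xi)v}.
\]
By that theorem these lie in $E(w\xi)$, and in particular $T_w(\xi)u,\ T_w(\xi)v$ belong to $V[0]_{w\xi-\rho}$, so the hypotheses of Proposition~\ref{prop:eigentrigprod} are satisfied at the shifted weight.

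Next I would apply Proposition~\ref{prop:eigentrigprod} at $w\xi$ to turn the functional pairing into an algebraic one:
\[
\langle w\psi^\xi_u, w\psi^\xi_v \rangle
= \langle \psi^{w\xi}_{T_w(\xi)u}, \psi^{w\xi}_{T_w(\xi)v} \rangle
= \langle T_w(\xi)u, T_w(\xi)v \rangle_{w\xi}.
\]
Then Theorem~\ref{thm:weyltrigprod} (the Weyl invariance of $\langle\,,\,\rangle_\xi$ under the scattering maps $T_w$) identifies the right-hand side with $\langle u, v \rangle_\xi$.

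Finally, a second application of Proposition~\ref{prop:eigentrigprod}, this time at $\xi$, gives
\[
\langle u, v \rangle_\xi = \langle \psi^\xi_u, \psi^\xi_v \rangle,
\]
completing the identification of $\langle w\psi^\xi_u, w\psi^\xi_v\rangle$ with $\langle \psi^\xi_u, \psi^\xi_v \rangle$. There is no genuine obstacle: the only thing to verify is that all the invocations are legal, which follows from the fact that $T_w(\xi)$ maps $V[0]_{\xi-\rho}$ to $V[0]_{w\xi-\rho}$ (implicit in Theorem~\ref{thm:scattering}). The identity, proved first for $\xi$ satisfying \Ref{eqn:functiongeneric} and \Ref{eqn:shapgeneric}, extends to all $\xi$ by the analyticity already exploited in the proof of Proposition~\ref{prop:crosstermsorth}.
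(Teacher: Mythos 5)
Your proof is correct and follows exactly the paper's argument: rewrite $w\psi^\xi_u$ via Theorem~\ref{thm:scattering}, convert both pairings to the algebraic form $\langle\,,\,\rangle_{w\xi}$ and $\langle\,,\,\rangle_\xi$ via Proposition~\ref{prop:eigentrigprod}, and conclude by Theorem~\ref{thm:weyltrigprod}. The closing remark about analytic continuation is a harmless addition not present in the paper's (terser) proof.
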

\begin{proof}
The left hand side equals
$\langle \psi^{w\xi}_{T_w(\xi)u}, \psi^{w\xi}_{T_w(\xi)v} \rangle$
by Theorem \ref{thm:scattering}.
By Proposition \ref{prop:eigentrigprod}, the corollary is equivalent to
\[
\langle T_w(\xi) u, T_w(\xi)v \rangle_{w\xi} = \langle u, v \rangle_\xi
\]
which is the statement of Theorem \ref{thm:weyltrigprod}
\end{proof}

\begin{prop}
\label{prop:crossterms}
Let $\xi \in \h^*$ and $w \in W$ be such that
$w \xi = \xi + \beta$ for $\beta \in Q$ nonzero.
For $u, v \in V[0]_{\xi-\rho}$,
\[
\langle \psi^\xi_u, w \psi^\xi_v \rangle = 0.
\]
holds.
\end{prop}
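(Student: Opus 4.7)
The proof should be essentially a two-line reduction to results already established. The plan is to rewrite $w\psi^\xi_v$ using the scattering relation and then invoke the orthogonality between distinct eigenspaces of $H_0$.

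First I would apply Theorem \ref{thm:scattering}: since $v \in V[0]_{\xi-\rho}$, we have
\[
w \psi^\xi_v = \psi^{w\xi}_{T_w(\xi) v} = \psi^{\xi+\beta}_{T_w(\xi) v},
\]
where the second equality uses the hypothesis $w\xi = \xi + \beta$. For this identity to make sense, one needs $T_w(\xi) v \in V[0]_{w\xi-\rho} = V[0]_{\xi+\beta-\rho}$, which is implicit in the statement of Theorem \ref{thm:scattering}.

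Next I would substitute this into the scalar product:
\[
\langle \psi^\xi_u, w\psi^\xi_v \rangle = \langle \psi^\xi_u, \psi^{\xi+\beta}_{T_w(\xi) v} \rangle.
\]
With $u \in V[0]_{\xi-\rho}$ and $T_w(\xi)v \in V[0]_{\xi+\beta-\rho}$, and $\beta \in Q$ nonzero, Proposition \ref{prop:crosstermsorth} says exactly that this quantity vanishes. This concludes the proof.

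There is essentially no obstacle: the content lies entirely in the previously established facts. The only sanity check one must perform is that $T_w(\xi)$ indeed sends the subspace $V[0]_{\xi-\rho}$ to $V[0]_{w\xi-\rho}$, but this is built into Theorem \ref{thm:scattering}, since otherwise the function $\psi^{w\xi}_{T_w(\xi) u}$ appearing there would be undefined.
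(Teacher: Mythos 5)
Your proof is correct and coincides with the paper's own argument: both rewrite $w\psi^\xi_v$ as $\psi^{\xi+\beta}_{T_w(\xi)v}$ via Theorem \ref{thm:scattering} and then invoke Proposition \ref{prop:crosstermsorth}. The added remark about $T_w(\xi)$ mapping $V[0]_{\xi-\rho}$ into $V[0]_{w\xi-\rho}$ is a reasonable sanity check that the paper leaves implicit.
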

\begin{proof}
The proposition follows from Proposition \ref{prop:crosstermsorth},
because of the relation
\[
\langle \psi^\xi_u, w \psi^\xi_v \rangle =
\langle \psi^\xi_u, \psi^{\xi+\beta}_{T_w(\xi)v} \rangle.
\]
\end{proof}

For $u \in V[0]_{\xi -\rho}$, we define
\[
\psi^{W\xi}_u = \sum_{w \in W} (-1)^{l(w)} w \psi^\xi_u
\]
where $l(w)$ denotes the length of $w$.
For $\xi$ integral, each $w \xi$ differs from $\xi$ by an element of
the root lattice, so $\langle \psi^\xi_u, \psi^{w \xi}_v \rangle$ is
well-defined.

\begin{cor}
\label{cor:weylproduct}
Let $\xi \in \h^*$ be integral,
with $w\xi \neq \xi$ for every $w \in W$.
For $u, v \in V[0]_{\xi -\rho}$, the equation
\[
\langle \psi^{W\xi}_u, \psi^{W\xi}_v\rangle =
|W| \langle \psi^\xi_u, \psi^\xi_v \rangle.
\]
holds.
\end{cor}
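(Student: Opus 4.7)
The plan is to expand $\psi^{W\xi}_u$ and $\psi^{W\xi}_v$ by their definition and split the resulting double sum into diagonal and off-diagonal parts. Writing
\[
\langle \psi^{W\xi}_u, \psi^{W\xi}_v \rangle = \sum_{w_1, w_2 \in W} (-1)^{l(w_1)+l(w_2)} \langle w_1 \psi^\xi_u, w_2 \psi^\xi_v \rangle,
\]
I will separate the diagonal terms $w_1 = w_2$ from the off-diagonal terms $w_1 \neq w_2$, and show that the latter all vanish while the former sum to $|W|\langle \psi^\xi_u, \psi^\xi_v\rangle$.

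For the diagonal contribution, $(-1)^{2l(w)} = 1$, and Corollary \ref{cor:weylprod} applies directly: $\langle w \psi^\xi_u, w \psi^\xi_v \rangle = \langle \psi^\xi_u, \psi^\xi_v \rangle$ for each $w \in W$. Summing over $W$ yields exactly $|W| \langle \psi^\xi_u, \psi^\xi_v \rangle$, the desired right hand side.

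The heart of the argument is showing that every off-diagonal term vanishes. Given $w_1 \neq w_2$, I will rewrite each factor via Theorem \ref{thm:scattering}, obtaining
\[
\langle w_1 \psi^\xi_u, w_2 \psi^\xi_v \rangle = \langle \psi^{w_1 \xi}_{T_{w_1}(\xi) u}, \psi^{w_2 \xi}_{T_{w_2}(\xi) v} \rangle.
\]
Setting $\beta = w_2 \xi - w_1 \xi$, the integrality of $\xi$ ensures $\beta \in Q$ (since $W$ shifts integral weights by elements of the root lattice), and the hypothesis that no non-identity element of $W$ fixes $\xi$, applied to $w_1^{-1} w_2$, guarantees $\beta \neq 0$. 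Proposition \ref{prop:crosstermsorth}, applied with base weight $w_1 \xi$ and shift $\beta$, then forces this pairing to vanish.

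I do not expect a genuine obstacle here: the off-diagonal vanishing reduces to the already-established $H_0$-eigenvalue orthogonality, while the diagonal summation is immediate from the Weyl invariance of the scalar product on eigenfunctions. The only point requiring a moment of care is checking that $T_{w_i}(\xi)$ sends $V[0]_{\xi - \rho}$ into $V[0]_{w_i \xi - \rho}$, so that Proposition \ref{prop:crosstermsorth} applies with the correct subspaces; but this is already implicit in the identity $w \psi^\xi_u = \psi^{w \xi}_{T_w(\xi) u}$ of Theorem \ref{thm:scattering}, whose right side is only defined when $T_w(\xi) u$ lies in $V[0]_{w \xi - \rho}$.
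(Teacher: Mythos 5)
Your proposal is correct and follows essentially the same route as the paper: the off-diagonal terms vanish by the $H_0$-eigenvalue orthogonality of Proposition \ref{prop:crosstermsorth} (packaged in the paper as Proposition \ref{prop:crossterms}, via Theorem \ref{thm:scattering} exactly as you do), and the diagonal terms are all equal to $\langle \psi^\xi_u, \psi^\xi_v\rangle$ by Corollary \ref{cor:weylprod}. Your version merely unwinds Proposition \ref{prop:crossterms} one step further and spells out why $\beta = w_2\xi - w_1\xi$ is a nonzero element of $Q$, which the paper leaves implicit.
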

\begin{proof}
By Proposition \ref{prop:crossterms}, for $w_1 \neq w_2$,
$\langle w_1 \psi^\xi_u, w_2 \psi^\xi_v \rangle$ equals zero.
By Corollary \ref{cor:weylprod}, the terms
$\langle w \psi^\xi_u, w \psi^\xi_v \rangle$
for each $w \in W$ are all equal.
\end{proof}

\section{Jack polynomials}
\label{sect:jack}
In this section, we set $\g = sl_{r+1}$, and let the representation $V$ be $S^{k(r+1)}\C^{r+1}$.
Then $V[0]$ is one-dimensional. We set $z_1 = 0$.

Denote the fundamental weights of $\g$ by $\omega_1, \dots, \omega_r$. Let $P_+$ denote the integral
dominant weights, the linear combinations of $\omega_1, \dots, \omega_r$ with non-negative integral
coeffecients.

\begin{prop}
For $\mu \in \h$ with $\mu - k \rho \in P_+$, the equality $V[0]_\mu = V[0]$ holds.
\end{prop}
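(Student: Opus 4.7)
The plan is to exhibit $V[0]$ as a one-dimensional space, then identify $I_\mu$ concretely, and finally check $I_\mu u_0 = 0$ on the single generator of $V[0]$.

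First, $\C^{r+1}$ has a basis $v_1,\dots,v_{r+1}$ on which $e_j$ acts as the matrix unit $E_{j,j+1}$. The symmetric power $V = S^{k(r+1)}\C^{r+1}$ has a weight basis of monomials $v_1^{a_1}\cdots v_{r+1}^{a_{r+1}}$ with $\sum a_i = k(r+1)$, and such a monomial has $\g$-weight zero exactly when $a_1=\cdots=a_{r+1}=k$. Thus $V[0] = \C u_0$ with $u_0 := v_1^k v_2^k \cdots v_{r+1}^k$.

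Second, since $e_j$ acts on $V$ as the derivation $v_j\,\partial/\partial v_{j+1}$, direct computation gives
\be
e_j^m u_0 = \frac{k!}{(k-m)!}\, v_1^k \cdots v_j^{k+m}\, v_{j+1}^{k-m} \cdots v_{r+1}^k \ \ (0 \le m \le k), \qquad e_j^m u_0 = 0 \ \ (m > k).
\ee
Setting $n_j := (\mu,\al_j^\vee)$, the hypothesis $\mu - k\rho \in P_+$ gives $n_j = k + (\mu-k\rho,\al_j^\vee) \ge k$, using $(\rho,\al_j^\vee)=1$. Consequently $e_j^{n_j+1}u_0 = 0$ for every $j=1,\dots,r$.

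Third, and most crucially, I would identify $I_\mu$ with the left ideal of $U(\n_+)$ generated by the $e_j^{n_j+1}$. Since $\mu$ is integral dominant, the maximal proper submodule of $M_\mu$ is generated as a $U(\g)$-module by the singular vectors $f_j^{n_j+1}\bs{1}_\mu$; because $\n_+$ annihilates each such singular vector and $\h$ preserves it up to scalar, one obtains $U(\g)f_j^{n_j+1}\bs{1}_\mu = U(\n_-)f_j^{n_j+1}\bs{1}_\mu$, and hence $\on{Ker}(S_\mu) = \sum_j U(\n_-)f_j^{n_j+1}\bs{1}_\mu$. Combined with the identity $\om(I_\mu)\bs{1}_\mu = \on{Ker}(S_\mu)$ and with the freeness of $M_\mu$ of rank one over $U(\n_-)$, this yields $\om(I_\mu) = \sum_j U(\n_-)f_j^{n_j+1}$. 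Applying $\om$, which maps $U(\n_-)$ isomorphically onto $U(\n_+)$ and sends $f_j$ to $-e_j$, gives
\be
I_\mu = \sum_{j=1}^r U(\n_+)\, e_j^{n_j+1}.
\ee

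Finally, any $x \in I_\mu$ can be written $x = \sum_j y_j e_j^{n_j+1}$ with $y_j \in U(\n_+)$, so $xu_0 = \sum_j y_j(e_j^{n_j+1}u_0) = 0$. Thus $u_0 \in V[0]_\mu$, and since $V[0]_\mu \subseteq V[0] = \C u_0$ is automatic, equality holds. The main obstacle is the third step: identifying $I_\mu$ as the left ideal generated by the $e_j^{n_j+1}$, which rests on the classical description of the maximal proper submodule of a Verma module with integral dominant highest weight and its transport under the Chevalley involution.
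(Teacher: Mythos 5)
Your proof is correct and follows essentially the same route as the paper's: both identify the kernel of the Shapovalov form (equivalently $I_\mu$) for the dominant integral weight $\mu$ via the singular vectors $f_j^{(\mu+\rho,\al_j^\vee)}\bs{1}_\mu$, and then check that the corresponding powers of $e_j$ annihilate $V[0]$. The only differences are cosmetic: you verify $e_j^{k+1}u=0$ by explicit computation on the monomial generator $v_1^k\cdots v_{r+1}^k$ where the paper invokes the emptiness of $V[(k+1)\al_j]$, and you spell out more carefully the left-ideal structure of $I_\mu$ that the paper leaves implicit.
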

\begin{proof}
For
\[
\Xi(\mu)(\bs{1}_\mu \otimes u) = \sum_{\ell,m \geq 0} (S_\mu^{-1})_{\ell m} F_\ell \bs{1}_\mu \otimes \omega(F_m)\otimes u
\]
to be defined, it is necessary and sufficient that for each $F$ such that $F \, \bs{1}_\mu$ is in the
kernel of the Shapovalov form, the expression $\omega(F)u$ is zero. The kernel is generated by the
highest weight vectors of the subrepresentations $M_{s_j \cdot \mu}$ of $M_\mu$ for simple reflections
$s_j$, which are given by $f_j^{(\mu + \rho, \al_j^\vee)} \bs{1}_\mu$. Since $(\mu + \rho, \al_j^\vee)$
is at least $k+1$, it is sufficient to verify that $e_j^{k+1} u$ is zero for $u \in V[0]$.
This is true since the weight spaces $V[(k+1)\al_j]$ are empty.
\end{proof}

We use the variables $X_{\omega_j} = e^{-2 \pi i \omega_j(\la)}$ for $\omega_j$ a fundamental weight.
For $k$ a fixed non-negative integer, the Jack polynomials $P^{(k)}_\nu$ are a family
of Weyl-invariant polynomials in $X^{\pm 1}_{\omega_1}, \dots, X^{\pm 1}_{\omega_r}$
parametrized by dominant weights $\nu$.
They are characterized by their form $P^{(k)}_\nu = X_{-\nu} + \sum_{\beta \in Q_+} a_\beta X_{-\nu + \beta}$
for coefficients $a_\beta \in \C$, and their orthogonality with respect to the inner product
\[
\langle \phi_1(\la), \phi_2(\la) \rangle_k = \frac{1}{|W|} {\rm const}_{\{X_{\omega_j}\}} \phi_1(\la) \phi_2(-\la)
\prod_{\al \in \Dl}(1- X_\al)^{k+1}
\]
where ${\rm const}_{\{X_{\omega_j}\}}$ is the constant term with respect to the $X_{\omega_j}$ variables.

Let $\Pi$ denote the product $X_{-\rho} \prod_{\al \in \Dl_+} (1 - X_\al)$.

\begin{prop}
\cite{FV2}
\label{prop:jackeigen}
For $u \in V[0]$ and $\xi \in \h$ such that $\xi - (k+1) \rho$ belongs to $P_+$,
the antisymmetrized eigenfunction $\psi^{W\xi}_u$ has the form
\[
\psi^{W\xi}_u = \Pi^{k+1} P^{(k)}_{\xi - (k+1)\rho} u.
\]
\end{prop}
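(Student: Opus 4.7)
The plan is to show that both sides of the claimed identity are Weyl anti-invariant eigenfunctions of $H_0$ with the same eigenvalue, lying in a one-dimensional eigenspace, and then to pin down the remaining scalar by matching leading terms.

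First I would check that $\Pi^{k+1}P^{(k)}_{\xi-(k+1)\rho} u$ is Weyl anti-invariant. A direct computation gives $s_j\Pi = -\Pi$, using $s_j X_{-\rho} = X_{-\rho+\al_j}$ and $s_j\prod_{\al\in\Dl_+}(1-X_\al) = -X_{-\al_j}\prod_{\al\in\Dl_+}(1-X_\al)$ (since $s_j$ permutes $\Dl_+\setminus\{\al_j\}$ and sends $\al_j$ to $-\al_j$). Since $P^{(k)}_\nu$ is Weyl invariant and the standard lift of $s_j$ to $SL_{r+1}$ acts on $u = x_1^k\cdots x_{r+1}^k$ as multiplication by $(-1)^k$, we obtain
\[
w(\Pi^{k+1}P^{(k)}_\nu u) = (-1)^{l(w)(2k+1)}\Pi^{k+1}P^{(k)}_\nu u = (-1)^{l(w)}\Pi^{k+1}P^{(k)}_\nu u,
\]
matching the Weyl anti-invariance of $\psi^{W\xi}_u$, which is immediate from its defining alternating sum.

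Next I would show the RHS is an $H_0$-eigenfunction with eigenvalue $\pi i(\xi,\xi)$. For every positive root $\al$, $u$ lies in a $(2k+1)$-dimensional irreducible $sl_2(\al)$-submodule of $V$ (visible for simple $\al$ from the explicit form $u=x_1^k\cdots x_{r+1}^k$, and extended to all positive roots by Weyl conjugation). The $sl_2$-Casimir applied to the weight-zero vector of such a submodule gives $(e_\al e_{-\al}+e_{-\al}e_\al)u = 2k(k+1)u$, uniformly in $\al$. Therefore on any scalar function $\phi(\la)$ times $u$,
\[
H_0(\phi u) = \frac{1}{4\pi i}\Big(\triangle\phi - 2k(k+1)\sum_{\al\in\Dl_+}\frac{\pi^2}{\sin^2(\pi\al(\la))}\phi\Big) u,
\]
so $4\pi i\,H_0$ acts on $\phi u$ as the trigonometric Calogero--Moser operator with coupling $k(k+1)$ acting on $\phi$. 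By the standard Heckman--Opdam/Jack correspondence (the gauge change by $\Pi^{k+1}$ turning the Sutherland operator into the Calogero--Moser form), this operator has $\Pi^{k+1}P^{(k)}_\nu$ as an eigenfunction with eigenvalue $-4\pi^2(\nu+(k+1)\rho,\nu+(k+1)\rho) = -4\pi^2(\xi,\xi)$, which yields $H_0$-eigenvalue $\pi i(\xi,\xi)$.

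Finally, both sides lie in the $\pi i(\xi,\xi)$-eigenspace of $H_0$ inside $\oplus_{\beta\in Q}\mc{A}(\xi+\beta)\otimes V[0]$. For $\xi$ generic enough that (\ref{eqn:functiongeneric}) holds for each $w\xi$, each $E(w\xi)$ is one-dimensional via $P^{w\xi}$; since $\xi=\nu+(k+1)\rho$ is regular dominant, the $|W|$ translates $w\xi$ are distinct, so the eigenspace inside $\oplus_w E(w\xi)$ has dimension $|W|$ with a one-dimensional Weyl anti-invariant subspace. Both sides lie there and thus differ by a scalar. Matching constant terms in an expansion of the form $e^{2\pi i\xi(\la)} u\cdot(1+O(X_j))$: on the LHS only $w=e$ contributes to the $X$-constant term (all other $w\xi$ differ from $\xi$ by a nonzero element of $Q_+$), and on the RHS $\Pi^{k+1}P^{(k)}_\nu = e^{2\pi i\xi(\la)}\prod_{\al}(1-X_\al)^{k+1}(1+\sum_\beta a_\beta X_\beta)$ also has constant $1$ after factoring out $e^{2\pi i\xi(\la)}$, so the scalar is $1$. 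Non-generic $\xi$ then follow by analytic continuation, using that $\xi-(k+1)\rho\in P_+$ implies $V[0]_{\xi-\rho}=V[0]$ by the preceding proposition, so both sides remain well-defined. The main obstacle is the Calogero--Moser step, where the coupling $k(k+1)$, the sign, and the prefactor $4\pi^2$ must be tracked carefully against the conventions of the Jack-polynomial literature.
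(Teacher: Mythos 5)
The paper gives no proof of this proposition --- it is imported verbatim from \cite{FV2} --- so there is nothing internal to compare against; I can only judge your argument on its own terms. Your first two steps are correct and are the standard opening moves: $\Pi$ is the Weyl denominator, so $w\Pi^{k+1}=(-1)^{(k+1)l(w)}\Pi^{k+1}$, the lift of $w$ acts on the one-dimensional space $V[0]$ by $(-1)^{k\,l(w)}$, and since $u$ is the zero-weight vector of a $(2k+1)$-dimensional $sl_2(\al)$-submodule for every positive root $\al$, one indeed gets $(e_\al e_{-\al}+e_{-\al}e_\al)u=2k(k+1)u$, so that $4\pi i\,H_0$ on $\C u$-valued functions is the Calogero--Sutherland operator with coupling $(k+1)k$ and $\Pi^{k+1}P^{(k)}_{\nu}$ is an eigenfunction with the required eigenvalue $\pi i(\xi,\xi)$ for $\xi=\nu+(k+1)\rho$.

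The genuine gap is in your uniqueness step. You count the dimension of the $\pi i(\xi,\xi)$-eigenspace ``for $\xi$ generic enough that (\ref{eqn:functiongeneric}) holds for each $w\xi$,'' but condition (\ref{eqn:functiongeneric}) fails for \emph{every} $\xi$ to which the proposition applies: if $\xi-(k+1)\rho\in P_+$, then for any positive root $\al$ the element $\beta=(\xi,\al^\vee)\al$ lies in $Q_+\setminus\{0\}$ and satisfies $(\xi-\beta,\xi-\beta)=(\xi,\xi)$, since $\xi-\beta=s_\al\xi$. These are exactly the resonant $\xi$ at which the Heckman--Opdam recursion for the coefficients of $e^{2\pi i\xi(\la)}\sum_\beta c_\beta X_\beta$ degenerates, so the eigenspace in $\mc A(\xi)\otimes V[0]$ is not controlled by the generic count, and the one-dimensionality of its anti-invariant part is precisely what needs proof. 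The proposed repair by analytic continuation is not available: the right-hand side $\Pi^{k+1}P^{(k)}_{\xi-(k+1)\rho}u$ is defined only on the discrete set of dominant integral $\xi$, so there is no family in which to continue. To close the argument you need one of two nontrivial inputs: either (a) show directly that $\psi^{W\xi}_u$ equals $\Pi^{k+1}$ times a $W$-invariant Laurent polynomial of the triangular form $X_{-\nu}+\sum_{\beta\in Q_+}a_\beta X_{-\nu+\beta}$ --- the divisibility by $\Pi^{k+1}$ requires bounding by $k$ the order of the poles of $\psi^{\xi}_u$ along the walls $X_\al=1$ --- and then invoke the fact that a $W$-invariant triangular eigenfunction of the Calogero--Sutherland operator with eigenvalue determined by $\nu$ must be $P^{(k)}_\nu$, because the diagonal entries $(\mu+(k+1)\rho,\mu+(k+1)\rho)$ for dominant $\mu\le\nu$ are pairwise distinct; or (b) carry out the resonance analysis showing that at these integral $\xi$ the space of anti-invariant eigenfunctions with leading term in $\C u$ is still one-dimensional. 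Either route is essentially the content of the proof in \cite{FV2}; neither is supplied in your sketch.
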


\begin{prop}
\label{prop:eigenjacknorm}
Let $\xi, \nu \in \h^*$ be such that $\xi - (k+1)\rho$ and $\nu - (k+1)\rho$
belong to $P_+$ with $\xi - \nu \in \Dl$. For $u, v \in V[0]$, we have the relation
\[
\langle \psi^{W \xi}_u, \psi^{W \nu}_v \rangle = |W| \langle P^{(k)}_{\xi - (k+1)\rho},
P^{(k)}_{\nu - (k+1)\rho} \rangle_k S(u,v).
\]
\end{prop}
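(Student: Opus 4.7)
The plan is to apply Proposition~\ref{prop:jackeigen} to make both eigenfunctions fully explicit, and then to reduce the bilinear form $\langle\cdot,\cdot\rangle$ to a constant-term calculation that reproduces the definition of the Jack inner product $\langle\cdot,\cdot\rangle_k$.

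First, I would substitute $\psi^{W\xi}_u = \Pi^{k+1} P^{(k)}_{\xi - (k+1)\rho}\, u$ and $\psi^{W\nu}_v = \Pi^{k+1} P^{(k)}_{\nu - (k+1)\rho}\, v$ into the definition of $\langle\cdot,\cdot\rangle$. Since the Jack polynomials and $\Pi$ are scalar-valued, bilinearity of the Shapovalov form isolates the vector dependence:
\[
S(\psi^{W\xi}_u(\la),\psi^{W\nu}_v(-\la))\;=\;\Pi(\la)^{k+1}\Pi(-\la)^{k+1}\,P^{(k)}_{\xi-(k+1)\rho}(\la)\,P^{(k)}_{\nu-(k+1)\rho}(-\la)\,S(u,v).
\]

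Next I would compute the identity $\Pi(\la)\Pi(-\la) = \prod_{\al\in\Dl}(1-X_\al)$. Using $X_\al(-\la)=X_{-\al}(\la)$ and $X_{-\rho}(\la)X_\rho(\la)=1$, one gets $\Pi(-\la)=X_\rho\prod_{\al\in\Dl_+}(1-X_{-\al})$, so $\Pi(\la)\Pi(-\la)=\prod_{\al\in\Dl_+}(1-X_\al)(1-X_{-\al})=\prod_{\al\in\Dl}(1-X_\al)$, and raising to the $(k+1)$st power produces exactly the weight factor appearing in $\langle\cdot,\cdot\rangle_k$.

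Then I would identify the contour integral with a constant term. As in the proof of well-definedness of $\langle\cdot,\cdot\rangle$, the substitution $X_j=e^{-2\pi i\la_j}$ turns $\int_C d\la_1\cdots d\la_r$ into $(-2\pi i)^{-r}\int_{\tilde C}\tfrac{dX_1}{X_1}\wedge\cdots\wedge\tfrac{dX_r}{X_r}$, which by the residue theorem picks out the coefficient of $X_1^0\cdots X_r^0$. The hypothesis $\xi-\nu\in Q$ (in particular $\xi-\nu\in\Dl$) ensures that the integrand lies in $\C[X_{\al_j}^{\pm 1}]$, so the constant term in the $X_{\al_j}$-basis coincides with the constant term in the $X_{\omega_j}$-basis used to define $\langle\cdot,\cdot\rangle_k$. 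Therefore
\[
\langle\psi^{W\xi}_u,\psi^{W\nu}_v\rangle \;=\; \mathrm{const}\Bigl\{\textstyle\prod_{\al\in\Dl}(1-X_\al)^{k+1}\,P^{(k)}_{\xi-(k+1)\rho}(\la)\,P^{(k)}_{\nu-(k+1)\rho}(-\la)\Bigr\}\,S(u,v),
\]
and by the definition of the Jack inner product the bracketed constant term equals $|W|\,\langle P^{(k)}_{\xi-(k+1)\rho},P^{(k)}_{\nu-(k+1)\rho}\rangle_k$, which gives the claim.

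The main obstacle is simply the bookkeeping: verifying cleanly the product identity $\Pi(\la)\Pi(-\la)=\prod_{\al\in\Dl}(1-X_\al)$, and tracking that the normalization prefactors $(-2\pi i)^{-r}$, the orientation of $\tilde C$, and the combinatorial constant $|W|$ match up so that the integral computes the constant term with no stray factors. Once these normalizations are in hand, the proof is essentially a one-line substitution combined with the orthogonality-free algebraic identities above.
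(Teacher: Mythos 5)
Your proposal is correct and follows essentially the same route as the paper: substitute Proposition~\ref{prop:jackeigen}, use $\Pi(\la)\Pi(-\la)=\prod_{\al\in\Dl}(1-X_\al)$, convert the contour integral to a constant term in the $X_{\al_j}$, and observe that (since $\xi-\nu\in\Dl\subset Q$ makes the integrand a Laurent polynomial in the $X_{\al_j}$) this constant term agrees with the one in the $X_{\omega_j}$ defining $\langle\cdot,\cdot\rangle_k$. Your bookkeeping is in fact slightly cleaner, since you retain the exponent $k+1$ on $\prod_{\al\in\Dl}(1-X_\al)$ that the paper's displayed formulas drop.
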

\begin{proof}
The left hand side is defined as
\[
\langle \psi^{W \xi}_u, \psi^{W \nu}_v \rangle =
\int_{C} S(\psi^{W \xi}_u (\la), \psi^{W \nu}_v(-\la)) d\la_1 d\la_2 \cdots d\la_r,
\]
where $C$ is given by each $\la_j = (\la, \al_j)$ ranging along the interval
from $0 - i \delta$ to $1 - i \delta$.
Since $\xi - \nu$ is in $\Dl$, the integrand is a meromorphic function of the variables
$X_1, \dots, X_r$, so the product is
\[
\langle \psi^{W \xi}_u, \psi^{W \nu}_v \rangle = {\rm const}_{\{X_j\}} S(\psi^{W\xi}_u (\la),
\psi^{W\nu}_v (-\la)).
\]

By Proposition \ref{prop:jackeigen}, we have
\[
{\rm const}_{\{X_j\}} S(\psi^{W\xi}_u, \psi^{W\nu}_v (-\la)) =
{\rm const}_{\{X_j\}} S(\Pi(\la)^{k+1} P^{(k)}_{\xi - (k+1)\rho}(\la) u,
\Pi(-\la)^{k+1} P^{(k)}_{\nu - (k+1)\rho}(-\la) v).
\]
Since $\Pi(-\la) = X_\rho \prod_{\al \in \Dl_-}(1-X_{\al})$, we have
\[
\langle \psi^{W \xi}_u, \psi^{W \nu}_v \rangle =
{\rm const}_{\{X_j\}} \prod_{\al \in \Dl} (1-X_{\al}) P^{(k)}_{\xi - (k+1)\rho}(\la)
P^{(k)}_{\nu - (k+1)\rho}(-\la) S(u,v).
\]
On the other hand, we have that
\[
|W|\langle P^{(k)}_{\xi - (k+1)\rho}, P^{(k)}_{\nu - (k+1)\rho} \rangle_k S(u,v)
= {\rm const}_{\{X_{\omega_j}\}} \prod_{\al \in \Dl} (1-X_{\al}) P^{(k)}_{\xi - (k+1)\rho}(\la)
P^{(k)}_{\nu - (k+1)\rho}(-\la).
\]
Since the expression on the right has well-defined constant terms in both the $X_j$ variables
and the $X_{\omega_j}$ variables, and since each $X_j$ is a non-constant
product of integer powers of the $X_{\omega_j}$ variables, the two constant terms are equal.
\end{proof}

\subsection{Bethe ansatz}
We apply the Bethe ansatz results in this case of $\g = sl_{r+1}$ and $V= S^{k(r+1)}\C^{r+1}$.
The highest weight of $V$ is $k(r+1) \omega_1 = \sum_{j=1}^r k (r+1-j) \al_j$. Thus the $t$ variable
in $\C^{kr(r+1)/2}$ has variables $t^{(j)}_\ell$ for $1\leq j \leq r$ and $1 \leq \ell \leq k(r+1-j)$.
The master function $\Phi_H(t,k(r+1) \omega_1, \xi-\rho)$ and the weight function $u(t)$ are as defined
previously, with $z_1 =1$.

The Bethe ansatz asserts that if $t_{cr}$ is an isolated critical point of
$\Phi_H(t,k(r+1) \omega_1, \xi-\rho)$, the function
$\psi^\xi_{u(t_{cr})}$ is an eigenfunction of $H_0$.
Theorem \ref{thm:kzbnorm} gives the norm of $\psi^\xi_{u(t_{cr})}$ as
\[
\left\langle \psi^\xi_{u(t_{cr})}, \psi^\xi_{u(t_{cr})} \right\rangle
=
{\rm Hess}_t \, \Phi_H(t,k(r+1) \omega_1, \xi-\rho).
\]

\begin{prop}
If $t_{cr}$ is an isolated critical point of $\Phi_H(t, \nu + k\rho)$,
then the relation
\[
\left\langle P^{(k)}_\nu , P^{(k)}_\nu \right\rangle_k S(u(t_{cr}), u(t_{cr})) =
{\rm Hess}_t \, \Phi_H(t_{cr},k(r+1) \omega_1, \nu + k\rho).
\]
holds.
\end{prop}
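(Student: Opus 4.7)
The plan is to simply chain together the three main computational results already established: Theorem \ref{thm:kzbnorm}, Corollary \ref{cor:weylproduct}, and Proposition \ref{prop:eigenjacknorm}. The idea is that $\nu + k\rho$, when written as $\xi - \rho$, gives $\xi = \nu + (k+1)\rho$, which is precisely the parameter needed to identify $\psi^\xi_{u(t_{cr})}$ with (the fundamental chamber representative of) a Jack-polynomial-valued eigenfunction.

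First, I would set $\xi = \nu + (k+1)\rho$, so $\xi - \rho = \nu + k\rho$. By hypothesis $\nu$ is a dominant integral weight (otherwise $P^{(k)}_\nu$ is not defined), so $\xi - (k+1)\rho = \nu \in P_+$, and $\xi$ lies strictly inside the dominant Weyl chamber; in particular $\xi$ is integral and $w\xi \ne \xi$ for every non-identity $w \in W$, which is the hypothesis needed for Corollary \ref{cor:weylproduct}. Theorem \ref{thm:kzbnorm} then gives
\[
\bigl\langle \psi^\xi_{u(t_{cr})}, \psi^\xi_{u(t_{cr})}\bigr\rangle = \mathrm{Hess}_t\, \Phi_H(t_{cr}, k(r+1)\omega_1, \nu + k\rho).
\]

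Next, Corollary \ref{cor:weylproduct} yields
\[
\bigl\langle \psi^{W\xi}_{u(t_{cr})}, \psi^{W\xi}_{u(t_{cr})}\bigr\rangle = |W|\,\bigl\langle \psi^\xi_{u(t_{cr})}, \psi^\xi_{u(t_{cr})}\bigr\rangle,
\]
while Proposition \ref{prop:eigenjacknorm}, applied with both weights equal to $\xi$ (so $\xi - \nu' = 0$ in the notation there), gives
\[
\bigl\langle \psi^{W\xi}_{u(t_{cr})}, \psi^{W\xi}_{u(t_{cr})}\bigr\rangle = |W|\,\bigl\langle P^{(k)}_\nu, P^{(k)}_\nu\bigr\rangle_k\, S(u(t_{cr}), u(t_{cr})).
\]
Equating these two expressions for $\langle \psi^{W\xi}_{u(t_{cr})}, \psi^{W\xi}_{u(t_{cr})}\rangle$, cancelling the common $|W|$, and substituting the value from Theorem \ref{thm:kzbnorm} produces exactly the claimed identity.

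The only slightly delicate point — and the one I would double-check — is that Proposition \ref{prop:eigenjacknorm} really does apply when the two parameters coincide (the statement there reads $\xi - \nu \in \Delta$, but the proof only uses that $\xi - \nu$ lies in the root lattice so that $\psi^{W\xi}_u(\la) \psi^{W\nu}_v(-\la)$ is periodic in $\la$, equivalently a Laurent expression in the $X_j$; this is trivially true when $\xi = \nu$). Apart from that, the argument is pure bookkeeping: the Bethe norm computation on the KZB side is converted into the Shapovalov pairing of the weight-function value via the Weyl antisymmetrization and the Jack polynomial identification, with no further analysis required.
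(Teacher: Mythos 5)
Your proof is correct and follows essentially the same route as the paper: set $\xi=\nu+(k+1)\rho$, combine Theorem \ref{thm:kzbnorm} with Corollary \ref{cor:weylproduct} and Proposition \ref{prop:eigenjacknorm}, and cancel the factor $|W|$. Your remark that Proposition \ref{prop:eigenjacknorm} must be read with $\xi-\nu$ in the root lattice (its proof only needs periodicity of the integrand) rather than literally in $\Dl$ is a legitimate and welcome clarification that the paper glosses over.
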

\begin{proof}
By Corollary \ref{cor:weylproduct}, the Bethe function
$\psi^{\nu + (k+1)\rho}_{u(t_{cr})}$ has the same norm as
its antisymmetrization $\psi^{W(\nu +(k+1)\rho)}_{u(t_{cr})}$ divided
by $|W|$, so
Proposition \ref{prop:eigenjacknorm} relates the
norm of Jack polynomial to the norm of the Bethe function.
\end{proof}

\section{Lie algebra $sl_2$, one tensor factor}
\subsection{Norm of eigenfunction}
We use explicit formulas to show Theorem \ref{thm:kzbnorm} in the case
$\g = sl_2$ and $V= V_\La$ consists of a single irreducible factor with
$\La = k \al_1$ where $k$ is a non-negative integer. We set $z_1 = 0$, $X = X_1$,
$\xi_1 = (\xi, \al_1)$ and $\la_1 = (\la, \al_1)$.
The space $V[0]$ is one dimensional
and we have
\[
H_0 =
\frac{1}{2 \pi i} \frac{d^2}{d\la_1^2} + \frac{\pi i k(k+1)}{2 \sin^2(\pi \la_1)}.
\]

\begin{lem}
\label{lem:sl2}
Let $V=V_\La$, with $\La = k\al_1$, 
and $u \in V[0]$.
For $\xi_1$ not equal to any of $\{1, \dots, k\}$, we have
\[
\left\langle \psi^\xi_u, \psi^\xi_u \right\rangle
= 
\frac{(\xi_1 + 1) (\xi_1 + 2) \cdots (\xi_1 +k)}
{(\xi_1 -1) (\xi_1 - 2) \cdots (\xi_1 -k)}
S(u,u).
\]
\end{lem}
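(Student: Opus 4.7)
The plan is to reduce the norm to $\langle u, u\rangle_\xi$ via Proposition \ref{prop:eigentrigprod}, exploit that $\g = sl_2$ has a one-dimensional $\n_-$ to compute the scalar action of $\mathcal{Q}(\xi)$ on the one-dimensional space $V[0]$ explicitly, and then recognize the resulting sum as a terminating hypergeometric evaluation.

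First, Proposition \ref{prop:eigentrigprod} together with Definition \ref{def:trigprod} yields
\[
\langle \psi^\xi_u, \psi^\xi_u\rangle = \langle u, u\rangle_\xi = S(u, \mathcal{Q}(\xi) u),
\]
so since $V[0]$ is one dimensional it suffices to identify the scalar by which $\mathcal{Q}(\xi)$ acts. Taking the basis $\{f_1^j\}_{j\geq 0}$ of $U(\n_-)$, the Shapovalov matrix on $M_{\xi-\rho}$ is diagonal, and a short induction using $[e_1,f_1^j]=j f_1^{j-1}(h_1-j+1)$ gives
\[
S_{\xi-\rho}(f_1^j \bs{1}_{\xi-\rho}, f_1^j \bs{1}_{\xi-\rho}) = j!(\xi_1-1)(\xi_1-2)\cdots(\xi_1-j).
\]
Since $a$ is an antihomomorphism with $a(f_1)=-f_1$ and $\omega$ is a homomorphism with $\omega(f_1)=-e_1$, the factors of $(-1)^j$ cancel and $a(f_1^j)\omega(f_1^j) = f_1^j e_1^j$. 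For the $(2k{+}1)$-dimensional irreducible $sl_2$-module $V = V_\La$ with zero-weight vector $u$, the standard identity $f_1^j e_1^j u = \frac{(k+j)!}{(k-j)!} u$ for $j\leq k$ (and zero for $j>k$) then assembles into
\[
\mathcal{Q}(\xi) u = \sum_{j=0}^k \frac{(k+j)!}{j!(k-j)!(\xi_1-1)(\xi_1-2)\cdots(\xi_1-j)}\, u.
\]

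The remaining step, and the only real obstacle, is the rational function identity
\[
\sum_{j=0}^k \frac{(k+j)!}{j!(k-j)!(\xi_1-1)\cdots(\xi_1-j)} = \frac{(\xi_1+1)(\xi_1+2)\cdots(\xi_1+k)}{(\xi_1-1)(\xi_1-2)\cdots(\xi_1-k)}.
\]
Rewriting the summand in rising Pochhammer symbols, the left side equals the terminating hypergeometric series $\hyp(-k,k+1;1-\xi_1;1)$, and Gauss's summation theorem (the Chu--Vandermonde identity) evaluates it to the right side. Alternatively, both sides are rational functions of $\xi_1$ which approach $1$ as $\xi_1\to\infty$ and have simple poles precisely at $\xi_1\in\{1,\dots,k\}$, so matching residues at each of these poles --- or a short induction on $k$ --- also closes the argument. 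Multiplying through by $S(u,u)$ gives the lemma; the restriction $\xi_1\notin\{1,\dots,k\}$ coincides with the domain where both sides are regular, so no further analytic extension is needed.
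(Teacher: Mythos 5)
Your proof is correct, but it takes a genuinely different route from the paper. The paper's proof in this section is a direct computation from the explicit formula for $\psi^\xi_u$ from \cite{FV2}: it recognizes $\psi^\xi_u$ as a Jacobi polynomial $P^{(-\xi_1,\xi_1)}_k\bigl(\frac{1+X}{1-X}\bigr)$ up to normalization, writes the pairing as a constant term in $X$, and evaluates it using the special values $P^{(\al,\beta)}_k(\pm 1)$. You instead invoke Proposition \ref{prop:eigentrigprod} to reduce everything to $S(u,\mathcal{Q}(\xi)u)$, compute $\mathcal{Q}(\xi)$ on the one-dimensional $V[0]$ from the diagonal Shapovalov matrix $S_{\xi-\rho}(f_1^j\bs{1},f_1^j\bs{1})=j!(\xi_1-1)\cdots(\xi_1-j)$ and the identity $f_1^je_1^ju=\frac{(k+j)!}{(k-j)!}u$, and close with Chu--Vandermonde; all of these steps check out (the $k=1,2$ cases and the $\hyp(-k,k+1;1-\xi_1;1)$ evaluation agree with the stated product, and the hypothesis $\xi_1\notin\{1,\dots,k\}$ is exactly what guarantees $V[0]_{\xi-\rho}=V[0]$ so that Proposition \ref{prop:eigentrigprod} applies and the diagonal entries of $S^{-1}_{\xi-\rho}$ you need are finite --- a point worth stating explicitly). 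Your answer also agrees with Theorem \ref{prop:QandT} combined with the explicit formula for $T_{s_1}(\xi)$ on $V^{(1)}_k[0]$, which is a nice cross-check. The trade-off: your argument is shorter and stays in finite-dimensional representation theory, but it leans on Proposition \ref{prop:eigentrigprod}, which is itself a key ingredient of Theorem \ref{thm:kzbnorm}; the paper's Jacobi-polynomial computation is deliberately independent of that machinery, since the stated purpose of the section is to verify Theorem \ref{thm:kzbnorm} by explicit formulas in this example. As a proof of the lemma as stated, yours is complete; as an independent consistency check of the main theorem, the paper's route retains more value.
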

\begin{proof}
The formula for $\psi^\xi_u$ in this case is calculated in \cite{FV2} as
\[
\psi^\xi_u(\la) = e^{2\pi i \xi(\la)} \sum_{j = 0}^k (-1)^j
\frac{(k+j)! \Gm(\xi_1 - j)}{j! (k - j)! \Gm(\xi_1)}
\left( \frac{X}{1-X} \right)^j u.
\]
We write
\[
\psi^\xi_u(\la) = C_{k, \xi} \
e^{2 \pi i \xi(\la)} P^{(-\xi_1, \xi_1)}_k \left( \frac{1+X}{1-X} \right) u
\]
with the Jacobi polynomial \cite{S}
\[
P^{(\al, \beta)}_k(z) = \frac{\Gm(\al + k +1)}{k!\Gm(\al + \beta + k +1)}
\sum_{j=0}^k \binom{k}{j} \frac{\Gm(\al + \beta + k + j + 1)}{\Gm(\al + j + 1)} \left(\frac{z-1}{2}\right)^j,
\]
and the constant $C_{k, \xi} = \frac{1}{P_k^{(-\xi_1,\xi_1)}(1)}$.
The norm of $\psi^\xi_u$ is
\begin{align*}
\left\langle \psi^\xi_u, \psi^\xi_u \right\rangle & =
{\rm const}_X S\left( C_{k, \xi} \ P^{(-\xi_1, \xi_1)}_k\left(\frac{1+X}{1-X}\right) u,
C_{k, \xi} \ P^{(-\xi_1, \xi_1)}_k\left(\frac{1+X^{-1}}{1-X^{-1}}\right) u \right)\\
&  = C_{k, \xi}^2  \ {\rm const}_X \ P^{(-\xi_1, \xi_1)}_k \left(\frac{1+X}{1-X}\right)
P^{(-\xi_1, \xi_1)}_k \left(\frac{X+1}{X-1}\right) \ S(u,u) \\
& = \frac{P^{(-\xi_1, \xi_1)}_k(-1)}{P^{(-\xi_1, \xi_1)}_k(1)} \ S(u,u).
\end{align*}
The special values
\[
P^{(\al, \beta)}_k(1) = \binom{k + \al}{k}
\]
and
\[
P^{(\al, \beta)}_k(-1) = (-1)^k \binom{k + \beta}{k}
\]
give
\[
\left\langle \psi^\xi_u, \psi^\xi_u \right\rangle =
(-1)^k \frac{\Gm(k + \xi_1 + 1) \Gm(-\xi_1 + 1)}
{\Gm(\xi_1 + 1) \Gm(k-\xi_1 +1)}
\]
which is equivalent to the Lemma.
\end{proof}

We suppress the upper index of the $t$ variables,
and write $t = (t_1, \dots, t_k)$. The master function is
\[
\Phi_H(t, z, \La, \xi - \rho) =
\sum_{j\neq j'} 2 \log(t_j - t_{j'}) - \sum_{j=1}^k 2k \log(t_j - 1)
- \sum_{j=1}^k (\xi_1 - 1) \log(t_j).
\]

\begin{thm}
\label{thm:sl2product}
For $t_{cr}$ a critical point of $\Phi_H$,
we have that
\[
\left\langle \psi^\xi_{u(t_{cr}, Z)}, \psi^\xi_{u(t_{cr}, Z)} \right\rangle
= \frac{\left[(\xi_1 + 1)(\xi_1+2) \cdots (\xi_1+k)\right]^3 k!}
{(\xi_1-1)(\xi_1 - 2) \cdots (\xi_1 -k) (k+1) (k+2)\cdots (2k)}.
\]
\end{thm}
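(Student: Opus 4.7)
The plan is to combine Lemma \ref{lem:sl2} with a direct evaluation of $S(u(t_{cr},Z),u(t_{cr},Z))$. By that lemma,
\[
\left\langle \psi^\xi_{u(t_{cr},Z)},\, \psi^\xi_{u(t_{cr},Z)}\right\rangle
=\frac{(\xi_1+1)(\xi_1+2)\cdots(\xi_1+k)}{(\xi_1-1)(\xi_1-2)\cdots(\xi_1-k)}\,
S\bigl(u(t_{cr},Z),u(t_{cr},Z)\bigr),
\]
so the theorem reduces to proving
\[
S\bigl(u(t_{cr},Z),u(t_{cr},Z)\bigr)
=\frac{\bigl[(\xi_1+1)(\xi_1+2)\cdots(\xi_1+k)\bigr]^{2}\,k!}{(k+1)(k+2)\cdots(2k)}.
\]

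For a single tensor factor with $Z_1=1$, every vector $f^\sigma_{\bs b}v$ in (\ref{eq:ratomega}) collapses to $f_1^k v_1$, and the rational coefficients from (\ref{eq:omegafunction}) sum by the standard partial-fraction identity
\[
\sum_{\sigma\in S_k}\frac{1}{(t_{\sigma(1)}-t_{\sigma(2)})(t_{\sigma(2)}-t_{\sigma(3)})\cdots(t_{\sigma(k)}-1)}
=\frac{1}{\prod_{j=1}^{k}(t_j-1)}
\]
to give $u(t,Z)=\bigl(\prod_{j}(t_j-1)\bigr)^{-1}f_1^{k}v_1$. Since $V_\La$ is irreducible of dimension $2k+1$ with $h_1v_1=2k\, v_1$, a direct Shapovalov computation yields $S(f_1^k v_1,f_1^k v_1)=(2k)!$, hence
\[
S\bigl(u(t_{cr},Z),u(t_{cr},Z)\bigr)=\frac{(2k)!}{\prod_{j=1}^{k}(t_j^{cr}-1)^{2}},
\]
and the task is reduced to evaluating $\prod_{j}(t_j^{cr}-1)$.

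For this, set $P(t)=\prod_{j=1}^{k}(t-t_j^{cr})$. The Bethe equations translate into $P''(t_j^{cr})/P'(t_j^{cr})=\frac{2k}{t_j^{cr}-1}+\frac{\xi_1-1}{t_j^{cr}}$, so the polynomial
\[
R(t)=t(t-1)P''(t)-\bigl(2kt+(\xi_1-1)(t-1)\bigr)P'(t)
\]
has degree at most $k$, vanishes at each $t_j^{cr}$, and by a leading-coefficient comparison equals $-k(k+\xi_1)P(t)$. Thus $P$ satisfies the hypergeometric equation
\[
t(1-t)P''+\bigl((2k+\xi_1-1)t+(1-\xi_1)\bigr)P'-k(k+\xi_1)P=0,
\]
whose unique degree-$k$ polynomial solution is proportional to $\hyp(-k,-k-\xi_1;1-\xi_1;t)$. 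Fixing the monic normalization from the leading coefficient (a ratio of Pochhammer symbols) and invoking Gauss's summation formula $\hyp(a,b;c;1)=\Gm(c)\Gm(c-a-b)/[\Gm(c-a)\Gm(c-b)]$ (legitimate here since $c-a-b=2k+1>0$) yields
\[
P(1)=\frac{(2k)!}{k!\,(\xi_1+1)(\xi_1+2)\cdots(\xi_1+k)},
\]
whence $\prod_{j}(t_j^{cr}-1)=(-1)^{k}P(1)$. Substituting this into the Shapovalov product and simplifying by $(2k)!=k!(k+1)(k+2)\cdots(2k)$ produces the claimed formula. The only nontrivial step is the identification of $P$ with a hypergeometric polynomial through the Bethe-derived ODE; the remaining Pochhammer and gamma-function bookkeeping from Gauss's summation is routine.
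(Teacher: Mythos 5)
Your proposal is correct and follows the same skeleton as the paper's proof: reduce via Lemma \ref{lem:sl2} to computing $S(u(t_{cr},Z),u(t_{cr},Z))$, collapse the weight function to $\bigl(\prod_j(t_j-1)\bigr)^{-1}f^k v_\La$, and use $S(f^kv_\La,f^kv_\La)=(2k)!$. The one genuine difference is how the value of $\prod_j(t_j^{cr}-1)$ at a critical point is obtained: the paper simply cites \cite{V2} for this product formula, whereas you derive it from scratch by converting the Bethe equations into the statement that $P(t)=\prod_j(t-t_j^{cr})$ satisfies a hypergeometric ODE, identifying $P$ with the Jacobi-type polynomial $\hyp(-k,-k-\xi_1;1-\xi_1;t)$ up to normalization, and evaluating $P(1)$ by Gauss summation. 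I checked the details: the leading-coefficient comparison giving the eigenvalue $-k(k+\xi_1)$, the parameter matching $a=-k$, $b=-k-\xi_1$, $c=1-\xi_1$, and the Gauss evaluation all come out right (distinctness of the $t_j^{cr}$ and $t_j^{cr}\neq 0,1$ are automatic from the logarithms in the master function, and uniqueness of the degree-$k$ polynomial solution holds for generic $\xi_1$, which suffices by rationality). Your approach buys self-containedness at the cost of length. One small remark: your value $\prod_j(t_j^{cr}-1)=(-1)^kP(1)$ differs by $(-1)^k$ from the formula quoted in the paper from \cite{V2}; a direct check at $k=1$ (where $t_{cr}=(\xi_1-1)/(\xi_1+1)$, so $1/(t_{cr}-1)=-(\xi_1+1)/2$) supports your sign, and in any case the discrepancy is immaterial since the quantity enters squared.
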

\begin{proof}
By applying Lemma \ref{lem:sl2}, we need only show
\[
S(u(t_{cr},Z), u(t_{cr},Z)) =
\left(\frac{(\xi_1+1)(\xi_1 +2) \cdots (\xi_1 +k)}{(k+1)(k+2) \cdots (2k)}\right)^2 (2k)!.
\]
The weight function is
\[
u(t, Z) = \sum_{\sigma \in S_k} \frac{1}{(t_{\sigma(1)} - t_{\sigma(2)})
(t_{\sigma(2)} - t_{\sigma(3)}) \cdots (t_{\sigma(k-1)} - t_{\sigma(k)})
(t_{\sigma(k)} - 1)} f^k  v_\La,
\]
which can be simplified to
\[
u(t,Z) = \left(\prod_{j=1}^k \frac{1}{t_j - 1}\right) f^k  v_\La.
\]
For $t$ a critical point of $\Phi_H$, \cite{V2} gives the formula
\[
\prod_{j=1}^k \frac{1}{t_j - 1} =
\frac{(\xi_1+1) (\xi_1 +2) \cdots (\xi_1 + k)}{(k+1)(k+2)\cdots(2k)}.
\]
Calculation shows that $S(f^k v_\La, f^k v_\La) = (2k)!$, which gives the theorem.
\end{proof}

By \cite[Equation 1.4.3]{V2}, we have
\[
{\rm Hess_t} \, (\Phi_H(t_{cr}, z, \La, \xi-\rho)) = k! \prod_{j=0}^{k-1}
\frac{(\xi_1 + 1 + j)^3}{(\xi - 1 - j)(2k-j)}
\]
to obtain the claim of Theorem \ref{thm:kzbnorm} that
\[
\left\langle \psi^\xi_{u(t_{cr},Z)}, \psi^\xi_{u(t_{cr},Z)} \right\rangle
= {\rm Hess}_t \, \Phi_H(t_{cr} , z,  \mathbf{\La}, \xi -\rho).
\]

\end{document}